    \newtheorem{thm}{Theorem}[section]
    \newtheorem{cor}[thm]{Corollary}
    \newtheorem{prop}[thm]{Proposition}
    \newtheorem{lem}[thm]{Lemma}
    \newtheorem{lemdef}[thm]{Lemma/Definition}
    \theoremstyle{definition}
    \newtheorem{defn}[thm]{Definition}
    \newtheorem{exmp}[thm]{Example}
    \theoremstyle{remark}
    \newtheorem{rem}[thm]{Remark}
    \newcommand{\Z}{\mathbb{Z}}
    \newcommand{\R}{\mathbb{R}}
    \newcommand{\C}{\mathbb{C}}
    \newcommand{\Ecal}{\mathcal{E}}
    \newcommand{\Hcal}{\mathcal{H}}
    \newcommand{\Ical}{\mathcal{I}}
    \newcommand{\Lcal}{\mathcal{L}}
    \newcommand{\Ocal}{\mathcal{O}}
    \newcommand{\Wcal}{\mathcal{W}}
    \newcommand{\Xcal}{\mathcal{X}}
    \newcommand{\Ycal}{\mathcal{Y}}
    \newcommand{\Cbb}{\mathbb{C}}
    \newcommand{\Pbb}{\mathbb{P}}
    \newcommand{\Qbb}{\mathbb{Q}}
    \newcommand{\abf}{\mathbf{a}}
    \newcommand{\bbf}{\mathbf{b}}
    \newcommand{\ibf}{\mathbf{i}}
    \newcommand{\sbf}{\mathbf{s}}
    \newcommand{\vbf}{\mathbf{v}}
        \newcommand{\wbf}{\mathbf{w}}
    \newcommand{\Bl}{\mathrm{Bl}}
    \newcommand{\Coh}{\mathrm{Coh}}
    \newcommand{\NS}{\mathrm{NS}}
    \newcommand{\Supp}{\mathrm{Supp}}
    \newcommand{\supp}{\mathrm{supp}}
    \newcommand{\RHom}{R\mathcal{H}om}
    \newcommand{\Ext}{\mathrm{Ext}}
    \newcommand{\ch}{\mathrm{ch}}
    \newcommand{\Pic}{\mathrm{Pic}}
    \newcommand{\half}{\frac{1}{2}}
    \newcommand{\hilb}[1]{S^{[#1]}}
    \newcommand{\Mov}{\mathrm{Mov}}
    \newcommand{\Halg}{H^*_{\mathrm{alg}}}
    \newcommand{\Db}{\mathrm{D}^b}
    \newcommand{\Stab}{\mathrm{Stab}}
    \newcommand*{\rom}[1]{\expandafter\@slowromancap\romannumeral #1@}
    \let\c@equation\c@thm
    \numberwithin{equation}{section}
\title[Birational geometry of Beauville-Mukai systems II]{Birational geometry of Beauville-Mukai systems II: general theory in low ranks}
\author{Xuqiang Qin and Justin Sawon}
\address{Department of Mathematics, University of North Carolina, Chapel Hill, NC 27599--3250, USA}
\email{qinx@unc.edu, sawon@email.unc.edu}
\date{July 2022}
\subjclass[2020]{14D20, 14E30, 14F08, 14J28, 14J42}
\keywords{Beauville-Mukai systems, Hilbert schemes, birational geometry, derived categories, Bridgeland stability conditions, hyperk\"ahler manifolds}
\pgfplotsset{compat=1.14} 
\begin{document}

\begin{abstract}
Via wall-crossing, we study the birational geometry of Beauville-Mukai systems on K3 surfaces with Picard rank one. We show that there is a class of walls which are always present in the movable cones of Beauville-Mukai systems. We give a complete description of the birational geometry of rank two Beauville-Mukai systems when the genus of the surface is small. 
\end{abstract}

\maketitle

\tableofcontents

\section{Introduction}
Let $S$ be a smooth projective K3 surface with $\Pic(S)=\Z\cdot H$ and $H^2=2d$. By a \emph{Beauville-Mukai system} we mean a moduli space of $H$-stable torsion sheaves on $S$ of the form $M_H(0,n,-1)$ for $n\geq 1$. We will call $n$ the rank of the Beauville-Mukai system. There is a natural morphism
\begin{align*}
    M_H(0,n,-1)\to |nH|
\end{align*}
to the complete linear system $|nH|$ given by taking the Fitting support. Mukai \cite{Muk84} showed that $M_H(0,n,-1)$ has a holomorphic symplectic structure. Beauville \cite{Bea91} showed that the above morphism is a Lagrangian fibration, i.e., a fibration by complex tori that are Lagrangian with respect to the holomorphic symplectic form. (Beauville assumed that $n=1$ for simplicity, though his argument works in general; the result also follows from a later theorem of Matsushita \cite{Mat99}.)
 
It is known that $M_H(0,n,-1)$ is birational (but not isomorphic) to the Hilbert scheme of $dn^2+1$ points on $S$; see the discussion and references in the introduction of \cite{QS22a}. In this paper, we use the techniques of wall-crossing from Bridgeland stability to study the birational geometry of these schemes.

The simplest case is when $n=d=1$. Then $\pi:S\to \Pbb^2$ is a double cover ramified at a smooth sextic curve. Mukai \cite{Muk84} noticed that the birational map 
\begin{align*}
    \hilb{2}\dashrightarrow M(0,1,-1)
\end{align*}
is a simple flop at the locus $\{\pi^{-1}(p)\;|\; p\in\Pbb^2\}\cong \Pbb^2$. Moreover, the exceptional locus in $M(0,1,-1)$ is the Brill-Noether locus $\{\Ecal\;|\; h^0(\Ecal)\geq 1\}$. 

By wall-crossing, Bayer \cite{Bay18} showed that if we keep $n=1$ but allow $d\geq 1$, then there is a stratified Mukai flop of $M(0,1,-1)$ at the Brill-Noether locus, resulting in a hyperk\"ahler manifold birational to $M(0,1,-1)$. Combining with a result in \cite{BM14a}, we can show that the resulting manifold is isomorphic to $\hilb{dn^2+1}$.
\begin{thm}\cite[Proposition 10.3]{BM14a}\cite{Bay18}
The stratified Mukai flop of $M(0,1,-1)$ at the Brill-Noether locus $\{\Ecal\mid h^0(\Ecal)\geq 1\}$ gives rise to a birational map
\begin{align*}
    M(0,1,-1)\dashrightarrow \hilb{dn^2+1}.
\end{align*}
Moreover, these two schemes are the only (projective) hyperk\"ahler manifolds in their birational class.
\end{thm}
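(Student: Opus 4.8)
The plan is to deduce both assertions from wall‑crossing for Bridgeland stability conditions on $\Db(S)$ and from the resulting description of the birational models of moduli of stable objects. Write $v=(0,H,-1)\in\Halg(S)$ for the Mukai vector of the sheaves parametrized by $M(0,1,-1)$, so that $v^2=2d$, $M(0,1,-1)=M_\sigma(v)$ for $\sigma$ in the Gieseker (large‑volume) chamber, and (recall $n=1$, so $dn^2+1=d+1$) the second model is $\hilb{d+1}$. By the Bayer--Macr\`i description, for a generic $\sigma$ the birational models of $M_\sigma(v)$ that are again projective hyperk\"ahler manifolds correspond to the chambers of the wall‑and‑chamber decomposition of $\overline{\Mov}\,(M_\sigma(v))$ induced from $\Stab^\dagger(S)$; this is \cite[Prop.~10.3]{BM14a}. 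Thus it suffices to (i) exhibit the wall responsible for the stratified Mukai flop at the Brill--Noether locus and identify the moduli space on its far side with $\hilb{d+1}$, and (ii) show this is the only wall meeting the interior of $\overline{\Mov}$, so that there are exactly two chambers and hence exactly two models.

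For (i) I would take the spherical class $s=v(\Ocal_S)=(1,0,1)$, with $s^2=-2$ and $\langle s,v\rangle=1$, and the rank‑two sublattice $H_W\subset\Halg(S)$ spanned by $v$ and $s$, which is primitive and hyperbolic of discriminant $-(4d+1)$. Along the corresponding wall $W$ the structure sheaf $\Ocal_S$ has the same phase as the $\sigma$‑stable objects of class $v$, and the objects $\Ecal$ that become strictly semistable are exactly those admitting a nonzero map $\Ocal_S\to\Ecal$, i.e.\ the Brill--Noether locus $\{\Ecal\mid h^0(\Ecal)\ge 1\}$, stratified by $\hom(\Ocal_S,\Ecal)$. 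Checking the numerical criteria in the Bayer--Macr\`i classification of walls --- one verifies that $H_W$ contains no $(-2)$‑class $w$ with $\langle w,v\rangle=0$ and no isotropic class $w$ with $\langle w,v\rangle\in\{1,2\}$ --- shows that $W$ is a flopping, not a divisorial or fake, wall, producing a small birational modification with the stratified structure of a Mukai flop. On the far side of $W$ the spherical twist $T_{\Ocal_S}$ is an autoequivalence of $\Db(S)$ identifying the two stability chambers and acting on cohomology by the reflection $v\mapsto v+\langle v,s\rangle s=(1,H,0)$; hence $M_{\sigma_-}(v)\cong M(1,H,0)$, and since a stable sheaf of class $(1,H,0)$ has the form $I_Z\otimes\Ocal(H)$ with $\mathrm{length}(Z)=d+1$, twisting by $\Ocal(-H)$ gives $M(1,H,0)\cong\hilb{d+1}$. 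This recovers \cite{Bay18}, specializing for $d=1$ to Mukai's flop of a $\Pbb^2$.

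For (ii) I would invoke \cite[Prop.~10.3]{BM14a} to reduce to enumerating the walls for $v$ inside the movable cone. Since $\Pic(S)=\Z H$, the lattice $\Halg(S)=\Z(1,0,0)\oplus\Z(0,H,0)\oplus\Z(0,0,1)$ has rank $3$, so every wall is governed by a rank‑two primitive hyperbolic sublattice containing $v$, determined by $v$ together with a single further primitive class $w$. As the set of potential walls is finite and explicitly bounded, a direct computation using $H^2=2d$ sorts them: the only class producing a wall in the interior of $\overline{\Mov}$ is $s=\pm v(\Ocal_S)$, while the isotropic class $(0,0,1)$ --- the pullback of $\Ocal_{|H|}(1)$ along the Lagrangian fibration $M(0,1,-1)\to|H|$ --- and the remaining relevant classes lie on $\partial\overline{\Mov}$ (they give divisorial contractions, for instance the Hilbert--Chow contraction on the $\hilb{d+1}$ side, or else lie outside the movable cone). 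Hence $\overline{\Mov}(M(0,1,-1))$ has exactly two chambers, with models $M(0,1,-1)$ and $\hilb{d+1}$, which are moreover non‑isomorphic (as recalled in the introduction); so these are precisely the projective hyperk\"ahler manifolds in this birational class.

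The main obstacle is step (ii): one must verify that the lattice enumeration of potential walls is complete and that each candidate is correctly classified as divisorial (lying on $\partial\overline{\Mov}$), flopping (interior), or fake (no effect), and the delicate cases of the Bayer--Macr\`i classification are exactly those occurring for small $d$ --- the same regime in which the flop degenerates, e.g.\ to Mukai's $\Pbb^2$‑flop at $d=1$ --- so the numerical bookkeeping there needs care. A subsidiary point in step (i) is to confirm that the autoequivalence realizing the wall‑crossing is $T_{\Ocal_S}$ itself and that the resulting stability condition lies in the Gieseker chamber for $(1,H,0)$, so that the far model is genuinely $\hilb{d+1}$ rather than merely a manifold of $K3^{[d+1]}$‑deformation type.
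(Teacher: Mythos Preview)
Your outline is correct and would succeed, but it takes a different and heavier route than the paper's own argument in Section~3 (Theorem~\ref{BM1} and Corollary~\ref{Rank1Phi}). For step~(ii)---showing the Brill--Noether wall is the only flopping wall---you propose the standard Bayer--Macr\`i lattice enumeration, classifying each candidate class as divisorial, flopping, or fake, and you flag the bookkeeping as the main obstacle. The paper bypasses this entirely: it works on the Hilbert-scheme side with $\vbf=(1,0,-d)$ and observes that on the vertical line $x=-1$ in the $(x,y)$-plane, any wall decomposition $(1,0,-d)=(a,b,c)+(1-a,-b,-d-c)$ forces the integer $a+b$ to lie strictly between $0$ and $1$ (comparing imaginary parts of the central charges), an immediate contradiction. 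This two-line argument handles all $d$ uniformly and avoids any case analysis. For step~(i), the paper identifies the far-side model not via $T_{\Ocal_S}$ and the reflection formula but via the explicit autoequivalence $\Phi_1=T_{\Ocal_S(-1)}\circ(\,-\otimes\Ocal_S(1))$, checking directly that $\Phi_1(\sigma_{t_-})$ is geometric by showing all skyscraper sheaves are stable; this removes the ``subsidiary point'' you raise about landing in the Gieseker chamber. Your approach buys generality (it is the template used later for $n\geq 2$), while the paper's buys brevity and makes the $n=1$ case self-contained.
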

The last statement follows from the classical description of the wall and chamber structure of the movable cone of a hyperk\"ahler manifold (see Section~\ref{pre-bm} for a review). In particular, there is a unique flopping wall in the movable cone, which we will call the \emph{Brill-Noether wall}.

In another direction, one can keep $d=1$ and consider higher rank $n$. Hellmann \cite{Hel20} studied the case when $n=2$ and showed that the birational map between $\hilb{5}$ and $M(0,2,-1)$ can be decomposed into four flops between hyperk\"ahler manifolds. The authors \cite{QS22a} studied the case when $n=3$ in a similar manner and showed that  the birational map between $\hilb{10}$ and $M(0,3,-1)$ can be decomposed into ten flops between hyperk\"ahler manifolds. The main strategy used in these two papers is the following: similarly to the rank one case, there is a Brill-Noether wall in the middle of the movable cone. Instead of trying to find a sequence of birational maps all the way from $\hilb{n^2+1}$ to $M(0,n,-1)$, or vice versa, we start at these two moduli spaces separately and meet at the Brill-Noether wall. One advantage of this strategy is that the dimensions of the exceptional loci starting from either side to the Brill-Noether wall are (roughly) increasing, making the interpretation of their geometric meaning much easier.

Inspired by the aforementioned work, we consider the case of general $n$ and $d$ in this paper. We introduce the notion of the \emph{rank} of a wall for walls on either side of the Brill-Noether wall (Definition~\ref{RankDef}), which is roughly the minimal rank of a Mukai vector defining the wall in the space of Bridgeland stability conditions. We show that there is always a collection of rank one walls. Moreover, we give a complete list of rank one walls and show that among them are the walls which are closest to the boundary of the movable cone. 

We apply the theory to Beauville-Mukai systems with rank $n=2$. We derive an algorithm to find all of the walls (Propositions~\ref{AlgS} and \ref{AlgM}). In contrast to earlier work~\cite{BM14b,Cat19}, our algorithm determines all of the Mukai vectors $\abf=(a,b,c)$ that define walls without the need to solve Pell's equation. We also show that when the degree is relatively small, the rank one walls dominate the birational geometry of $M(0,2,-1)$ and the sporadic higher rank walls behave nicely. In turn, we can describe the exceptional locus for each birational map and give a complete description of the birational geometry of $M(0,2,-1)$. In particular, we obtain the following result.
\begin{thm}
Let $N_d$ be the number of non-isomorphic projective hyperk\"ahler manifolds birational to $M(0,2,-1)$ when the degree of $S$ is $2d$. For $d\leq 6$, $N_d$ is given in the table below.
\begin{center}
\begin{tabular}{ |c|c|c|c|c|c|c| } 
 \hline
$d$&$1$&$2$&$3$&$4$&$5$&$6$\\
\hline
$N_d$&$5$&$7$&$10$&$12$&$15$&$17$\\
 \hline
\end{tabular}
\end{center}
\end{thm}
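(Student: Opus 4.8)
The plan is to reduce the determination of $N_d$ to counting the chambers of the movable cone $\Mov(M(0,2,-1))$, and then to carry out that count for each $d\le 6$ using the wall-finding algorithm of Propositions~\ref{AlgS} and~\ref{AlgM}.

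\emph{Reduction to counting chambers.} Since $\Pic(S)=\Z\cdot H$, the N\'eron--Severi lattice of $M(0,2,-1)$ has rank two, so $\Mov(M(0,2,-1))$ is a two-dimensional rational polyhedral cone. By the Bayer--Macr\`i description of birational models of moduli spaces of Bridgeland-stable objects recalled in Section~\ref{pre-bm}, the flopping walls cut $\Mov$ into the nef cones of the hyperk\"ahler manifolds birational to $M(0,2,-1)$, every such manifold arising in this way; hence if there are $k_d$ flopping walls, there are $k_d+1$ chambers. I would then observe that the two boundary rays of $\Mov$ have distinct characters: one has positive Beauville--Bogomolov square and supports the Hilbert--Chow divisorial contraction on the model $\hilb{4d+1}$, the other has square zero and supports the Lagrangian fibration $M(0,2,-1)\to|2H|$. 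Any symmetry of the cone preserves this form, so the two rays cannot be interchanged, and for a two-dimensional cone this forces the symmetry group to be trivial; consequently $\mathrm{Bir}(M(0,2,-1))$ permutes the chambers trivially, distinct chambers give non-isomorphic manifolds, and $N_d=k_d+1$. For $d=1$ this already matches Hellmann's chain of four flops.

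\emph{Enumerating and validating the walls.} For each $d\le 6$ I would run Proposition~\ref{AlgS} to list the walls between $\hilb{4d+1}$ and the Brill--Noether wall and Proposition~\ref{AlgM} to list those between the Brill--Noether wall and $M(0,2,-1)$, recording in each case the Mukai vectors $\abf=(a,b,c)$ that define the wall. By the classification of rank one walls these organize into a regular family of rank one walls on either side of the Brill--Noether wall — including the walls closest to $\partial\Mov$ — together with finitely many sporadic walls of rank $\ge 2$. Each candidate must then be shown to be an honest flopping wall rather than a fake wall: from the explicit description of the $\sigma$-semistable objects along the wall I would identify the destabilized locus, check that the exceptional locus of the induced birational map is non-empty of the expected codimension (so that the two adjacent models are genuinely non-isomorphic), and verify that no sporadic wall collapses onto a rank one wall or is in fact already defined by a rank one Mukai vector. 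Summing the walls that survive and adding one produces $N_d$; performing this for $d=1,\dots,6$ yields the stated table.

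\emph{Main obstacle.} The delicate point is the treatment of the sporadic higher-rank walls. Unlike the rank one walls, which come in a regular list, these occur irregularly and must be handled individually: for each one must confirm non-emptiness, confirm that its rank in the sense of Definition~\ref{RankDef} is genuinely $\ge 2$ rather than that of a disguised rank one vector, and pin down the shape of its exceptional locus — this is precisely the content of the assertion that ``the sporadic higher rank walls behave nicely.'' It is also the reason the description, and hence the table, is stated only for $d\le 6$: beyond that range the number of such walls and their interaction with the rank one walls is no longer controlled by the present methods. The remaining ingredients — the reduction to counting chambers, the rank one classification, and the final arithmetic — are routine once the list of walls is in hand.
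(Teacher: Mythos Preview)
Your approach is essentially the paper's: there the count is organized via Corollaries~\ref{RankOne}, \ref{OnleOneTwoS}, \ref{RankOneM}, \ref{OnlyOneTwoM} and Lemmas~\ref{Rank2S}, \ref{Rank2M}, and then assembled into the explicit chains of flops of Section~8 (Propositions~\ref{deg4}, \ref{deg68}, \ref{deg10}, together with the $d=1$ case from \cite{Hel20}), rather than a direct run of the algorithms, but the substance is the same enumeration of flopping walls followed by a chamber count. Two small remarks: the verification step you describe is superfluous, since Propositions~\ref{AlgS} and~\ref{AlgM} already output genuine flopping walls by the Bayer--Macr\`i numerical criterion, so non-emptiness and ``not fake'' come for free; and your argument that $\mathrm{Bir}$ acts trivially on the cone (via the distinct Beauville--Bogomolov squares of the two boundary rays) makes explicit a point the paper leaves implicit.
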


This paper is organized as follows. Section 2 contains some preliminaries. Section 3 reviews the (complete) birational geometry of $M(0,1,-1)$. In Section 4 we study the `rank one' walls of $M(0,n,-1)$ and uses them to motivate the notion of the rank of a wall. Section 5 is a brief excursion to eliminate totally semistable walls on our paths of wall-crossing. In Section 6 we study the walls of $M(0,2,-1)$ using results from Section 4. In Section 7 we analyze the exceptional loci of rank one walls. Finally, Section 8 combines the results from the previous two sections to give a complete description of the birational geometry of $M(0,2,-1)$ when the degree of the K3 surface is small.

\subsection*{Relations to previous work}
Wall-crossing in Bridgeland stability conditions has been used to study the birational geometry of moduli spaces on various surfaces; see \cite{ABCH13} and \cite{LZ18} for the program on the projective plane $\Pbb^2$, \cite{BC13} on Hirzebruch and del Pezzo surfaces, \cite{MM13} on abelian surfaces, and \cite{Nue16} and \cite{NY20} on Enriques surfaces.

On K3 surfaces our main techniques come from the seminal work of Bayer and Macr\`i \cite{BM14a,BM14b}, in which they studied the birational geometry of moduli spaces of stable sheaves on K3 surfaces via wall-crossing. In particular, \cite[Theorem 5.7]{BM14b} provided a numerical criterion for finding all the walls. However, the application of this criterion to Hilbert schemes of points requires solving Pell's equations. Our analysis and algorithm simplifies this process to solving elementary inequalities, thus avoid the difficulty of solving Pell's equations. 

The idea of studying the birational geometry of $M(0,n,-1)$ by starting at $\hilb{dn^2+1}$ and $M(0,n,-1)$ separately and converging at the Brill-Noether wall originated in \cite{Hel20}, where the case $n=2$ and $d=1$ was treated. This was generalized to the $n=3$ and $d=1$ case by the authors \cite{QS22a}. In the present paper we apply this idea to general $n$ and $d$. Moreover, all of the walls in \cite{Hel20} and \cite{QS22a} will turn out to be rank one walls, which are thoroughly studied in Section 4.

In general, the Hilbert scheme of points $\hilb{m}$ is birational to a Lagrangian fibration if and only if $d(m-1)$ is a perfect square (see \cite{BM14b}). Our choice of $m=dn^2+1$ is a special case. Other choices of $m$ have been considered by \cite{Marku06}, \cite{Saw07}, and \cite{BM14a}. In the third installment of this series \cite{QS22c}, we will use similar wall-crossing techniques to show that the Hilbert scheme itself admits a Lagrangian fibration when $d$ is large relative to $m$ (and $d(m-1)$ is a perfect square).

\subsection*{Acknowledgements} The authors would like to thank Emanuele Macr\`i for helpful discussions. The second author gratefully acknowledges support from the NSF, grant number DMS-1555206.

\subsection*{Notation and convention}
Throughout this paper, $(S,H)$ will denote a smooth projective polarized K3 surface over the complex numbers. We assume that $S$ has Picard rank one, $\Pic(S)=\Z\cdot H$, and we let $H^2=2d$. 

We use $\Halg(S,\Z)$ to denote the algebraic cohomology group $H^0(S,\Z)\oplus \NS(S)\oplus H^4(S,\Z)$ of $S$. We use $\Db(S)$ to denote the bounded derived category of coherent sheaves on $S$.

Given $\vbf\in\Halg(S,\Z)$, by $M_H(\vbf)$ and $M_H^{st}(\vbf)$ we mean the moduli spaces of $H$-semistable and $H$-stable sheaves, respectively, with Mukai vector $\vbf$. We will often drop the subscript when no confusion will be caused. Given a Bridgeland stability condition $\sigma$, by $M_\sigma(\vbf)$ and $M^{st}_\sigma(\vbf)$ we mean the moduli spaces of $\sigma$-semistable and $\sigma$-stable objects in $\Db(S)$, respectively, with Mukai vector $\vbf$.

By a hyperk\"ahler manifold we mean a projective hyperk\"ahler manifold.\

\section{Preliminaries}
\subsection{Beauville-Mukai systems}\label{pre-bm}
Let $S$ be a polarized K3 surface as above. The term \emph{Beauville-Mukai system} is used to denote the moduli space $M(0,n,s)$ of $H$-Gieseker semistable sheaves with Mukai vector $(0,n,s)$, where $n\geq 1$. We call $n$ the \emph{rank} of the system. The Beauville-Mukai system $M(0,n,s)$ is smooth when $\mathrm{gcd}(n,s)=1$. In this paper we focus on the case when $s=-1$. Henceforth, by \emph{Beauville-Mukai system of rank $n$} we will mean the moduli space $M(0,n,-1)$. 

For $\Lcal\in \Pic(S)$, let $T_{\Lcal}$ denote the spherical twist by $\Lcal$, which is an autoequivalence of the derived category $\Db(S)$. For $n\geq 1$, we use $\Phi_n$ to denote the autoequivalence
\begin{align*}
  \Phi_n: \Db(S)\to\Db(S)
\end{align*}
given by composing the spherical twist $T_{\Ocal_S(-n)}$ with tensoring by $\Ocal_S(n)$. Then $\Phi_n$ induces a birational map
\begin{eqnarray*}
\Phi_n:\hilb{dn^2+1} & \dashrightarrow & M(0,n,-1), \\
\xi & \mapsto & \Ocal_D(-\xi)\otimes \Ocal_S(n)|_D,
\end{eqnarray*}
where for generic $\xi$, $D\in|nH|$ is the unique smooth curve in $|nH|$ containing $\xi$.

We will also use $\Phi_n$ to denote the induced automorphism of $\Halg(S,\Z)$, which makes the following square commute:
\[
\begin{tikzcd}
\Db(S) \arrow{r}{\Phi_n}\arrow{d}{\vbf} & \Db(S) \arrow{d}{\vbf} \\
\Halg(S,\Z) \arrow{r}{\Phi_n} & \Halg(S,\Z) \
\end{tikzcd}
\]
It is straightforward to check that $\Phi_n:\Halg(S,\Z)\to \Halg(S,\Z) $ is given by the matrix:
\[
\begin{pmatrix}
 -dn^2&-2dn&-1\\
 n&1&0\\
 -1&0&0
\end{pmatrix}
\]

By the assumption that $\Pic(S)=\Z\cdot H$, the N{\'e}ron-Severi group $\NS(\hilb{dn^2+1})$ has rank two with a basis given by
\begin{align*}
    \tilde{H}=\theta(0,-1,0) \qquad\mbox{and}\qquad B=\theta(-1,0,-dn^2),
\end{align*}
where $\theta$ is the Mukai homomorphism. By \cite[Proposition 13.1(a)]{BM14b}, the movable cone of $\hilb{dn^2+1}$ is given by
\begin{align*}
    \Mov(\hilb{dn^2+1})=\left\langle \tilde{H}, \tilde{H}-\frac{1}{n}B\right\rangle.
\end{align*}

A smooth moduli space of sheaves on $S$ has the structure of a hyperk\"ahler manifold. Recall that birational hyperk\"ahler manifolds have the same movable cone \cite[Proposition 3.13]{Deb20}. By \cite[Theorem 7]{HT19}, the movable cone of a hyperk\"ahler manifold $M$ admits a locally polyhedral chamber decomposition, whose chambers correspond to hyperk\"ahler manifolds birational to $M$. We will refer to these hyperk\"ahler manifolds birational to $M$ as \emph{birational models of $M$}.  Thus to understand the birational geometry of the Beauville-Mukai system $M(0,n,-1)$, it suffices to understand the chamber decomposition of $\Mov(\hilb{dn^2+1})$ and the birational maps corresponding to the walls. Since $\tilde{H}$ induces the Hilbert-Chow morphism on $\hilb{dn^2+1}$, the only chamber with $\tilde{H}$ on its boundary is the chamber for $\hilb{dn^2+1}$. On the other hand, $\tilde{H}-\frac{1}{n}B$ induces a Lagrangian fibration on $M(0,n,-1)$ (and thus a rational Lagrangian fibration on $\hilb{dn^2+1}$), and therefore the only chamber with $\tilde{H}-\frac{1}{n}B$ on its boundary is the one for $M(0,n,-1)$.

By the description of $\Mov(\hilb{dn^2+1})$, a wall must be given by a ray (starting at $0$) through $\tilde{H}-\Gamma B$ for some $0<\Gamma<\frac{1}{n}$. We will use the Bridgeland stability techniques developed in \cite{BM14b} to find the walls and analyze their induced birational maps.

\subsection{Stability conditions on  K3 surfaces}\label{pre-stab}
In this section we give a brief review of (Bridgeland) stability conditions on K3 surfaces. A more detailed version can be found in \cite[Section 2]{QS22a}. Let $X$ be a K3 surface. Its \emph{algebraic cohomology group} is 
\begin{align*}
    \Halg(X,\Z):=H^0(X,\Z)\oplus \NS(X)\oplus H^4(X,\Z).
\end{align*}
For $E\in K(\mathrm{D}^b(X))$, its \emph{Mukai vector} is given by
\begin{eqnarray*}
\vbf:K(\mathrm{D}^b(X)) & \to & \Halg(X,\Z), \\
    E & \mapsto & \vbf(E):=\ch(E)\sqrt{\mathrm{td(X)}}.
\end{eqnarray*}
The \emph{Mukai pairing} $(-,-)$ on $\Halg(X,\Z)$ is defined by
\begin{align*}
    ((r,c_1,s),(r',c_1',s')):=c_1 c_1'-rs'-r's\in \Z.
\end{align*}
By Riemann-Roch, $\chi(F',F)=-(\vbf(F'),\vbf(F))$ for any $F,F'\in \mathrm{D}^b(X)$. The group $\Halg(X,\Z)$ endowed with the Mukai pairing is called the \emph{algebraic Mukai lattice} of $X$.\\

We refer the reader to \cite{Bri07,Bri08} for the definitions of slicings, hearts, general stability conditions, and the complex manifold structure on the space of stability conditions. We note that, for the rest of this paper, all stability conditions on any K3 surface $X$ will be with respect to the lattice $\Halg(X,\Z)$.

Let $\beta$ and $\omega\in \mathrm{NS}(X)_\Qbb$ with $\omega$ ample. By tilting the category of coherent sheaves on $X$ based on the Mumford slope, one can define a stability condition $\sigma_{\beta,\omega}$ satisfying the condition that the skyscraper sheaves $\Ocal_x$ are stable for all $x\in X$ (here $\beta$ and $\omega$ must satisfy certain numerical conditions: see Section~\ref{pre-hilb}). Its central charge is given by
\begin{eqnarray*}
Z_{\beta,\omega}:K_{\mathrm{num}}(X) & \to & \C, \\
F & \mapsto & \left(e^{\beta+\sqrt{-1}\omega},\vbf(F)\right).
\end{eqnarray*}

By \cite[Lemma 8.2]{Bri07} the group $\mathrm{Aut}(\mathrm{D}^b(X))$ acts on the left on the space of stability conditions $\mathrm{Stab}(X)$ by $\Psi(\mathcal{P},Z)=(\Psi(\mathcal{P}), Z\circ\Psi^{-1})$, where $\Psi\in \mathrm{Aut}(\mathrm{D}^b(X))$, $\mathcal{P}$ is the slicing, and $Z$ is the central charge of the stability condition. The universal cover $\widetilde{\mathrm{GL}}_2^+(\R)$ of the group of $2\times2$ matrices with real coefficients and positive determinants acts on the right on $\mathrm{Stab}(X)$.
Stability conditions in the $\widetilde{\mathrm{GL}}_2^+(\R)$ orbit of $\sigma_{\beta,\omega}$ for $\beta,\omega\in \mathrm{NS}(X)_\Qbb$ with $\omega$ ample are called \emph{geometric stability conditions}.  The connected component in $\mathrm{Stab}(X)$ containing all geometric stability conditions is denoted by $\mathrm{Stab}^\dagger(X)$. See \cite[Section 8]{Bri08} for its properties. \\

Fixing a Mukai vector $\mathbf{v}\in H^*_{\mathrm{alg}}(X,\Z)$, there exists a locally finite set of walls (real codimension one submanifolds with boundary) in $\mathrm{Stab}^\dagger(X)$, determined solely by $\mathbf{v}$, such that the set of $\sigma$-(semi)stable objects does not change within chambers. A stability condition $\sigma$ is said to be \emph{generic} with respect to $\mathbf{v}$ if it does not lie on a wall for $\mathbf{v}$.

\begin{thm}\cite[Theorem 1.3(a)]{BM14a}
Let $\sigma\in \mathrm{Stab}^\dagger(X)$ be generic with respect to $\mathbf{v}$. There exists a coarse moduli space $M_\sigma(\mathbf{v})$ parametrizing $\sigma$-semistable objects with Mukai vector $\vbf$. Moreover, $M_\sigma(\vbf)$ is a normal irreducible projective variety.

If $\vbf$ is primitive, then $M_\sigma(\vbf)=M^{st}_\sigma(\vbf)$ is a projective hyperk\"ahler manifold and the Mukai homomorphism induces an isomorphism
\begin{align*}
    \theta_\vbf:
    \begin{cases}
        \mbox{   }\vbf^\perp\xrightarrow{\sim}\mathrm{NS}(M_\sigma(\vbf))\qquad &\mbox{ if }\vbf^2>0, \\
        \vbf^\perp/\vbf\xrightarrow{\sim}\mathrm{NS}(M_\sigma(\vbf))\qquad &\mbox{ if }\vbf^2=0.
    \end{cases}
\end{align*}
\end{thm}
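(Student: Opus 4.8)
The plan is to reduce the entire statement to the classical theory of Gieseker moduli of sheaves on a K3 surface, via three ingredients: (i) constructing $M_\sigma(\vbf)$ abstractly as a proper algebraic space, (ii) exhibiting an honest ample class on it through the positivity of central charges, and (iii) transporting the remaining geometric conclusions — irreducibility, normality, the hyperk\"ahler structure, and the Mukai homomorphism — from the Gieseker chamber to an arbitrary generic $\sigma$ along a chain of wall-crossings.

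First I would realize $M_\sigma(\vbf)$ as an algebraic space. The stack of objects of $\Db(X)$ with no negative self-$\Ext$'s is a higher algebraic stack locally of finite type, and inside it the locus of $\sigma$-semistable objects of fixed Mukai vector $\vbf$ is open: this requires a boundedness statement — $\sigma$-semistable objects of class $\vbf$ form a bounded family — which for the tilted hearts $\sigma_{\beta,\omega}$ reduces to the corresponding fact for slope-semistable sheaves and then propagates over all of $\mathrm{Stab}^\dagger(X)$ using that the $\widetilde{\mathrm{GL}}_2^+(\R)$- and $\mathrm{Aut}(\Db(X))$-actions preserve boundedness. When $\sigma$ is generic and $\vbf$ is primitive, semistability coincides with stability, so this open substack is a $\mathbb{G}_m$-gerbe over an algebraic space $M_\sigma(\vbf)$; smoothness of $M_\sigma(\vbf)$ of the expected dimension $\vbf^2+2$, together with the holomorphic symplectic form, follows from deformation theory on the K3, where obstructions lie in $\Ext^2$ and the Yoneda pairing composed with the canonical trace produces a closed non-degenerate $2$-form, exactly as in Mukai's argument for sheaves. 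Properness I would establish by a Langton-type algorithm carried out in the abelian category $\mathcal{A}=\mathcal{P}((\phi,\phi+1])$: given a family over a punctured curve one repeatedly modifies the flat limit along its maximal destabilizing subobject, and Noetherianity of the heart forces termination at a $\sigma$-semistable extension; as with boundedness, one checks this first for the geometric stability conditions and then propagates by the group actions.

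Next, projectivity. Following the positivity lemma, to a generic $\sigma=(Z,\mathcal{A})$ I would attach the real divisor class $\ell_\sigma\in\NS(M_\sigma(\vbf))_\R$ determined by $Z$, characterized on curves $C\subset M_\sigma(\vbf)$ by $\ell_\sigma\cdot C=\Im\bigl(-Z(\Phi_{\mathcal{E}}(\Ocal_C))/Z(\vbf)\bigr)$ for a (quasi-)universal family $\mathcal{E}$. The positivity lemma shows $\ell_\sigma$ is nef and strictly positive on any curve whose generic member parametrizes pairwise non-$S$-equivalent objects; since $\sigma$ is generic and $\vbf$ primitive, every point of $M_\sigma(\vbf)$ is a stable object, so $\ell_\sigma$ is ample once one also knows it is big, which one obtains by comparing with a stability condition in the Gieseker chamber (where $\ell_\sigma$ is a genuine polarization) along a path; a proper algebraic space with an ample class is a projective scheme. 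For the geometric conclusions, recall that in the Gieseker chamber — for $\sigma_{\beta,\omega}$ in the large-volume limit, after applying a Fourier--Mukai equivalence to make $\vbf$ of positive rank if necessary — $M_\sigma(\vbf)$ is the classical moduli space $M_H(\vbf)$, for which irreducibility, normality, and (for $\vbf$ primitive with $\vbf^2\geq -2$) the hyperk\"ahler structure and the Mukai-homomorphism isomorphism $\theta_\vbf$ are due to Mukai, O'Grady and Yoshioka. For a general generic $\sigma$ one joins it to the Gieseker chamber by a path in $\mathrm{Stab}^\dagger(X)$ meeting walls transversally; on each wall the stable loci of the two adjacent chambers agree away from codimension $\geq 2$, so the adjacent moduli spaces are birational. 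Hence $M_\sigma(\vbf)$ is birational to the projective hyperk\"ahler manifold $M_H(\vbf)$ and, being itself smooth and projective, is a projective hyperk\"ahler manifold, irreducible and normal; and since birational hyperk\"ahler manifolds have canonically identified second cohomology, the isomorphism $\theta_\vbf$ descends from $M_H(\vbf)$ to $M_\sigma(\vbf)$ (quotienting by $\vbf$ when $\vbf^2=0$, reflecting the rational Lagrangian fibration). The non-primitive case is handled by the same comparison with Gieseker moduli of strictly semistable sheaves, which are again normal irreducible projective varieties.

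The main obstacle is the first half: showing that $M_\sigma(\vbf)$ is a proper algebraic space — the boundedness of $\sigma$-semistable objects and the Langton-type extension argument in the tilted heart $\mathcal{A}$ — and then proving projectivity via the positivity lemma, the production of an actual ample class being precisely what the classical GIT construction fails to supply in the derived setting. Once that is in place, the hyperk\"ahler structure, irreducibility, normality and the Mukai homomorphism come essentially for free by transporting the corresponding facts about Gieseker moduli through the wall-crossing birational maps.
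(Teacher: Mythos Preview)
The paper does not prove this theorem. It is stated as a citation, \texttt{\textbackslash cite[Theorem 1.3(a)]\{BM14a\}}, and no proof is given; the result is used as a black box throughout the paper. So there is nothing in the paper to compare your proposal against.

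That said, your outline is essentially the strategy of the cited reference [BM14a] (together with inputs from Toda and Yoshioka): construct $M_\sigma(\vbf)$ as a proper algebraic space via boundedness and a Langton-type valuative criterion in the tilted heart, prove projectivity via the positivity lemma producing an ample class $\ell_\sigma$, and deduce the remaining geometric statements by wall-crossing from the Gieseker chamber. One caveat: your treatment of the non-primitive case is too quick. When $\vbf$ is not primitive, generic $\sigma$ does not force semistable $=$ stable, and the existence of a good moduli space in the sense of Alper (rather than a coarse moduli space for a $\mathbb{G}_m$-gerbe) is the relevant issue; the argument in [BM14a] handles this by a more careful comparison with the Uhlenbeck-type contraction and the classical semistable moduli, not simply by ``the same comparison''. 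Also, bigness of $\ell_\sigma$ is not quite how ampleness is established in [BM14a]; rather one shows $\ell_\sigma$ contracts no curve when $\sigma$ is generic (since distinct stable objects are never $S$-equivalent), hence is ample on the proper algebraic space directly.
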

Crossing a wall induces a birational map on the moduli space. A wall is either \emph{a divisorial wall, a flopping wall, or a fake wall}, depending on the type of birational map it induces; \cite[Section 5]{BM14b} provides numerical criterion for finding and classifying the walls.

We recall the comparison between Gieseker and Bridgeland stability for large $\omega$.
\begin{thm}\cite[Proposition 14.2]{Bri08}
    Fix $\vbf\in \Halg(X,\Z)$ and $s_0\in\R$. Let $H\in \mathrm{NS}(X)$ be ample with $\mu_{s_0H,H}(\vbf)>0$. Then $M_{\sigma_{s_0H,tH}}(\vbf)=M_H(\vbf)$ for $t\gg0$.
\end{thm}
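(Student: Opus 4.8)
The plan is the standard \emph{large-volume limit} comparison. Write $\beta=s_0H$ and $\omega=tH$; for $t\gg 0$ the pair $(\beta,\omega)$ satisfies the numerical conditions of Section~\ref{pre-hilb}, so $\sigma_{\beta,\omega}$ is defined, and its heart $\mathcal{A}_{\beta,\omega}$ is the tilt of $\Coh(X)$ at the torsion pair $(\mathcal{T}_\beta,\mathcal{F}_\beta)$, where $\mathcal{T}_\beta$ contains all torsion sheaves and all torsion-free sheaves whose $\mu$-Harder--Narasimhan factors have positive $\beta$-twisted slope $\tfrac{H\cdot c_1}{\mathrm{rk}}-s_0H^2$, and $\mathcal{F}_\beta$ is the rest. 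From the definition of $Z_{\beta,\omega}$ one gets, for a Mukai vector $(r,c,e)$,
\[
\Im Z_{\beta,tH}=t\bigl(H\cdot c-r\,s_0H^2\bigr),\qquad
\Re Z_{\beta,tH}=s_0\,(H\cdot c)-e-\tfrac{r}{2}\bigl(s_0^2-t^2\bigr)H^2 .
\]
The first step is to note that an $H$-Gieseker semistable sheaf $F$ with $\vbf(F)=\vbf$ lies in $\mathcal{A}_{\beta,tH}$: if $\mathrm{rk}(\vbf)=0$ because torsion sheaves lie in $\mathcal{T}_\beta$, and if $\mathrm{rk}(\vbf)>0$ because then $F$ is $\mu$-semistable with $\mu_{s_0H,H}(\vbf)>0$, so $F\in\mathcal{T}_\beta$; here the \emph{strictness} of the hypothesis is exactly what is used, since a torsion-free sheaf of twisted slope $0$ would land in $\mathcal{F}_\beta$ and enter $\mathcal{A}_{\beta,tH}$ only after a shift.

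The heart of the matter is the $t\to\infty$ asymptotics of the Bridgeland slope $\nu_{\beta,tH}:=-\Re Z_{\beta,tH}/\Im Z_{\beta,tH}$. Writing $\mu=(H\cdot c)/r\in\Qbb\cup\{+\infty\}$, the formulas give, for $F\in\mathcal{T}_\beta$,
\[
\nu_{\beta,tH}(F)=-\,\frac{H^{2}\,t}{2\bigl(\mu-s_0H^{2}\bigr)}+O(1/t)\qquad(t\to\infty),
\]
the leading term being $0$ when $\mu=+\infty$. When one compares two sheaves of the same Mumford slope $\mu$ the leading terms agree, the remaining $s_0$-dependence cancels as well, and the surviving term reproduces the comparison of reduced Hilbert polynomials. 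Hence, for $t\gg 0$, the ordering of $\nu_{\beta,tH}$ on $\mathcal{A}_{\beta,tH}$ refines first the Mumford slope and then Gieseker's reduced Hilbert polynomial — which is precisely $H$-Gieseker (semi)stability (the dependence on $s_0$ has dropped out, as it must since $M_H(\vbf)$ does not involve $s_0$). Granting the uniformity in $t$ discussed below, it follows that a sheaf $F\in\mathcal{T}_\beta$ with $\vbf(F)=\vbf$ is $\sigma_{\beta,tH}$-semistable for $t\gg 0$ if and only if it is $H$-Gieseker semistable: a Gieseker-destabilizing subsheaf necessarily has the same $\beta$-twisted Mumford slope as $F$ — hence is $\mu$-semistable of positive twisted slope and lies in $\mathcal{T}_\beta\subset\mathcal{A}_{\beta,tH}$ — and so destabilizes in $\sigma_{\beta,tH}$ by the asymptotics; conversely, using $\Hom(\mathcal{F}_\beta,\mathcal{T}_\beta)=0$ one checks that a $\sigma$-destabilizing subobject of a sheaf in $\mathcal{T}_\beta$ is an honest destabilizing subsheaf. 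Matching Jordan--H\"older filtrations then identifies the $S$-equivalence classes, giving $M_{\sigma_{s_0H,tH}}(\vbf)=M_H(\vbf)$ as coarse moduli spaces.

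Two points remain, the second being the main obstacle. First, the identification also requires that for $t\gg 0$ every $\sigma_{\beta,tH}$-semistable $F$ with $\vbf(F)=\vbf$ be a sheaf: in the canonical sequence $0\to H^{-1}(F)[1]\to F\to H^0(F)\to 0$ in $\mathcal{A}_{\beta,tH}$, if $G:=H^{-1}(F)\neq 0$ then $G$ is torsion-free of positive rank with nonpositive twisted slope, so $\Re Z_{\beta,tH}(G[1])\sim-\tfrac12\mathrm{rk}(G)\,t^{2}H^{2}\to-\infty$ forces the phase of the subobject $G[1]\subset F$ to tend to $1$, while the phase of $F$ stays bounded away from $1$ (it tends to $0^{+}$ when $\mathrm{rk}(\vbf)>0$ and to $\tfrac12$ when $\mathrm{rk}(\vbf)=0$, using $\mathrm{rk}(\vbf)\geq 0$), a contradiction. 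Second, every ``for $t\gg 0$'' above must be made uniform over the \emph{a priori} $t$-dependent potential destabilizers; this rests on boundedness: the numerical classes of the sub- and quotient objects that can occur are constrained to a finite set (in the heart via $\Im Z_{\beta,tH}\geq 0$, the fixed total class, and a Bogomolov-type bound; for subsheaves via the usual boundedness of subsheaves of bounded rank with prescribed slope), so that the slope comparison acquires a single threshold serving all of them. I expect this boundedness/uniformity step, rather than the slope computation, to be the delicate part.
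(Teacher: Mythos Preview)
The paper does not give its own proof of this statement: it is quoted verbatim as \cite[Proposition~14.2]{Bri08} and used as a black box, with no argument supplied. So there is nothing in the paper to compare your proposal against.

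That said, your sketch is the standard large-volume-limit argument and is essentially Bridgeland's own proof in \cite{Bri08} (see also \cite{Bay09} for the polynomial-stability viewpoint). You have the right structure: (i) for $t\gg 0$ semistable objects of class $\vbf$ are sheaves; (ii) on sheaves in $\mathcal{T}_\beta$ the Bridgeland slope $\nu_{\beta,tH}$ asymptotically lexicographically refines first the Mumford slope and then the reduced Hilbert polynomial; (iii) boundedness makes the threshold uniform. Your identification of (iii) as the delicate step is correct.

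One point to tighten. In the converse direction you claim that a $\sigma$-destabilizing subobject $A\hookrightarrow F$ in $\mathcal{A}_{\beta,tH}$, with $F\in\mathcal{T}_\beta$ a sheaf, is ``an honest destabilizing subsheaf''. The cohomology long exact sequence does force $A$ to be a sheaf in $\mathcal{T}_\beta$, but the induced map $A\to F$ of sheaves need not be injective in $\Coh(X)$: its kernel is $H^{-1}(F/A)\in\mathcal{F}_\beta$, which may be nonzero. The fix is routine --- replace $A$ by its image $A'\subset F$ in $\Coh(X)$ (still in $\mathcal{T}_\beta$, and a genuine subobject of $F$ in the heart with quotient $H^{0}(F/A)$), then use the asymptotics and the fact that the discarded piece lies in $\mathcal{F}_\beta$ to see that $A'$ still destabilizes --- but it should be said rather than asserted via $\Hom(\mathcal{F}_\beta,\mathcal{T}_\beta)=0$, which by itself does not give injectivity of $A\to F$ in $\Coh(X)$.
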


The following duality result will be used in Section 6.
\begin{prop}\cite[Proposition 2.11]{BM14b}\label{dual}
    The stability conditions $\sigma_{\beta,\omega}$ and $\sigma_{-\beta,\omega}$ are dual to each other: an object $E\in \mathrm{D}^b(X)$ is $\sigma_{\beta,\omega}$-(semi)stable if and only if $\RHom(E,\Ocal_X)[1]$ is $\sigma_{-\beta,\omega}$-(semi)stable.
\end{prop}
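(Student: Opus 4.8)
The plan is to realise the derived dual as an anti-autoequivalence of $\Db(X)$ that intertwines $\sigma_{\beta,\omega}$ with $\sigma_{-\beta,\omega}$, so that ``(semi)stable'' transports across it. Write $\mathbb{D}(-):=\RHom(-,\Ocal_X)[1]$. First I would record its basic properties: it is a contravariant autoequivalence of $\Db(X)$ with $\mathbb{D}\circ\mathbb{D}\cong\id$ (biduality on the smooth surface $X$); it sends each skyscraper $\Ocal_x$ to $\Ocal_x[-1]$ (a local Koszul computation); and on $\Halg(X,\Z)$ it acts by the lattice isometry $(r,c,s)\mapsto(-r,c,-s)$, which follows from $\ch(\RHom(E,\Ocal_X))=\ch(E)^\vee$ together with the shift. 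In particular $\mathbb{D}$ preserves the distinguished component $\mathrm{Stab}^\dagger(X)$.

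Next I would carry out the central-charge computation: using that $\mathbb{D}$ is a Mukai-lattice isometry squaring to the identity and that $\mathbb{D}(e^{-\beta+\sqrt{-1}\omega})=-\overline{e^{\beta+\sqrt{-1}\omega}}$ in $\Halg(X,\C)$, one obtains $Z_{-\beta,\omega}(\mathbb{D}(E))=-\overline{Z_{\beta,\omega}(E)}$ for all $E$. Hence if $Z_{\beta,\omega}(E)$ has phase $\phi$ then $Z_{-\beta,\omega}(\mathbb{D}(E))$ has phase $1-\phi$; thus $\mathbb{D}$ reverses the ordering of phases, exactly the behaviour forced by a contravariant functor, which interchanges subobjects and quotients.

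The crux, and the main obstacle, is to pin down the slicing: one must show $\mathbb{D}$ carries $\mathcal{P}_{\beta,\omega}(\phi)$ into $\mathcal{P}_{-\beta,\omega}(1-\phi)$, equivalently that $\mathbb{D}_*\sigma_{\beta,\omega}$ lies in the $\widetilde{\mathrm{GL}}_2^+(\R)$-orbit of $\sigma_{-\beta,\omega}$. One efficient route is abstract: $\mathbb{D}_*\sigma_{\beta,\omega}$ is a stability condition in $\mathrm{Stab}^\dagger(X)$ with central charge $-\overline{Z_{\beta,\omega}}=Z_{-\beta,\omega}$ in which every $\mathbb{D}(\Ocal_x)=\Ocal_x[-1]$ is stable of one common phase, hence it is geometric; since a geometric stability condition is determined by its central charge up to the $\widetilde{\mathrm{GL}}_2^+(\R)$-action, which leaves the set of (semi)stable objects unchanged, this identifies $\mathbb{D}_*\sigma_{\beta,\omega}$ with $\sigma_{-\beta,\omega}$ for our purposes. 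A more hands-on route verifies the inclusion $\mathbb{D}(\mathcal{A}_{\beta,\omega})\subseteq\mathcal{P}_{-\beta,\omega}([0,1))$ directly from the tilt: for a $\mu_\omega$-semistable torsion-free sheaf $G$ of slope $\mu$, its reflexive hull is locally free and $\mu_\omega$-semistable of the same slope, so its dual $G^\vee$ is $\mu_\omega$-semistable of slope $-\mu$, while $\mathcal{E}xt^{\geq 1}(G,\Ocal_X)$ is supported in dimension $\leq 1$; this places $\mathbb{D}(G)$ in the appropriate part of the tilted heart $\mathcal{A}_{-\beta,\omega}$, and pure one-dimensional sheaves are treated similarly. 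The delicate point on this route is the careful bookkeeping of the torsion and lower-dimensional $\mathcal{E}xt$-contributions against the torsion pair defining $\mathcal{A}_{-\beta,\omega}$.

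Finally I would assemble the pieces. Since $\mathbb{D}$ is exact and contravariant it converts a short exact sequence $0\to A\to E\to B\to0$ in a heart into one with the roles of sub and quotient reversed; combining this with the phase reversal $\phi\mapsto1-\phi$ and the identification $\mathbb{D}_*\sigma_{\beta,\omega}\sim\sigma_{-\beta,\omega}$, a direct comparison of Harder--Narasimhan filtrations shows that $E$ is $\sigma_{\beta,\omega}$-(semi)stable if and only if $\mathbb{D}(E)=\RHom(E,\Ocal_X)[1]$ is $\sigma_{-\beta,\omega}$-(semi)stable, after a harmless shift reducing to the case $E\in\mathcal{A}_{\beta,\omega}$. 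Applying the same statement with $\beta$ replaced by $-\beta$ and using $\mathbb{D}\circ\mathbb{D}\cong\id$ gives the converse implication, finishing the proof.
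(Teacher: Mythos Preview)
The paper does not prove this proposition at all: it is quoted verbatim from \cite[Proposition~2.11]{BM14b} and used as a black box in Section~7, so there is no ``paper's own proof'' to compare against. Your sketch is in fact a faithful outline of the argument given in \cite{BM14b}: set $\mathbb{D}=\RHom(-,\Ocal_X)[1]$, compute its action on Mukai vectors and on the central charge to get the conjugate relation $Z_{-\beta,\omega}\circ\mathbb{D}=-\overline{Z_{\beta,\omega}}$, then pin down the slicing either by the geometric-stability-condition characterisation or by directly dualising the torsion pair. One small point to tighten in your write-up: when you pass from the central-charge identity to the slicing via ``geometric stability conditions are determined by their central charge up to $\widetilde{\mathrm{GL}}_2^+(\R)$'', you should say explicitly that for an anti-autoequivalence the induced action on $\mathrm{Stab}(X)$ involves complex conjugation on the central charge and reversal of phases, so that the relevant element of $\widetilde{\mathrm{GL}}_2^+(\R)$ really is the one sending $\phi\mapsto 1-\phi$; otherwise the bookkeeping of ``contravariant $+$ phase reversal $=$ stability preserved'' is easy to get off by a sign or a shift.
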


\subsection{Walls for $\hilb{dn^2+1}$ and the Beauville-Mukai system}\label{pre-hilb}

Let $W$ be a wall for $\vbf:=(1,0,-dn^2)$ in $\Stab^\dagger(S)$. By \cite[Proposition 5.1]{BM14b}, one can associate to it a primitive hyperbolic rank two sublattice $\mathcal{H}_W$ of $\Halg(S,\Z)$ containing $\vbf$. If $W$ is a flopping wall, then the wall corresponds to a nef divisor $\tilde{H}-\Gamma B$ for some $\Gamma\in\mathbb{Q}$ satisfying $0<\Gamma<\frac{1}{n}$. The rational number $\Gamma$ is determined by requiring that $\tilde{H}-\Gamma B\in \vbf^\perp\cap\abf^\perp$ for any $\abf\in\mathcal{H}_W$ which is not a multiple of $\vbf$. Conversely, any wall whose $\Gamma$ computed as above satisfies $0<\Gamma<\frac{1}{n}$ is a flopping wall.

For $x\in \R$ and $y\in \R_{>0}$, we use $\sigma_{x,y}$ to denote $\sigma_{xH,yH}$ as defined in Section~\ref{pre-stab}. By \cite[Lemma 6.2]{Bri08}, we obtain a set of (geometric) stability conditions parametrized by an open subset of the upper half plane.
\begin{lem}\label{isStab}
For any $x\in\R$ and $y>\frac{1}{\sqrt{d}}$, $\sigma_{x,y}$ is a stability condition on $S$.
\end{lem}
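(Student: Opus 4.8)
The plan is to reduce the statement to the numerical criterion underlying \cite[Lemma 6.2]{Bri08} and then check that criterion by a short computation. Recall from Section~\ref{pre-stab} that for $\beta,\omega\in\NS(S)_\Qbb$ with $\omega$ ample one forms the tilted heart $\mathcal{A}(\beta,\omega)\subset\Db(S)$ together with the central charge $Z_{\beta,\omega}(F)=(e^{\beta+\sqrt{-1}\omega},\vbf(F))$, and that \cite[Lemma 6.2]{Bri08} guarantees that $\sigma_{\beta,\omega}=(Z_{\beta,\omega},\mathcal{A}(\beta,\omega))$ is a stability condition on $S$ as soon as $Z_{\beta,\omega}(E)\notin\R_{\leq 0}$ for every spherical sheaf $E$ on $S$. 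So it suffices to verify this condition for $\beta=xH$, $\omega=yH$ when $y>1/\sqrt d$.

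First I would cut down the supply of spherical sheaves. A torsion sheaf $E$ on $S$ has $\vbf(E)=(0,\ell H,s)$ for some $\ell\in\Z$, so $\vbf(E)^2=2d\ell^2\geq 0\neq -2$; hence no torsion sheaf is spherical, and every spherical sheaf is torsion-free of rank $r\geq 1$. Writing $\vbf(E)=(r,\ell H,s)$ with $\ell\in\Z$, the condition $\vbf(E)^2=-2$ reads $2d\ell^2-2rs=-2$, i.e.\ $rs=d\ell^2+1$.

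Then comes the computation. Expanding the central charge gives $Z_{x,y}(E)=2d\ell(x+\sqrt{-1}y)-d(x+\sqrt{-1}y)^2r-s$, so that
\[
\mathrm{Im}\,Z_{x,y}(E)=2dy(\ell-rx),\qquad \mathrm{Re}\,Z_{x,y}(E)=2d\ell x-d(x^2-y^2)r-s.
\]
Since $y>0$ and $r\neq 0$, the vanishing $\mathrm{Im}\,Z_{x,y}(E)=0$ forces $x=\ell/r$; substituting this and $s=(d\ell^2+1)/r$ into the real part yields $\mathrm{Re}\,Z_{x,y}(E)=(dy^2r^2-1)/r$, which is strictly positive because $r\geq 1$ and $y^2>1/d\geq 1/(dr^2)$. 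Hence $Z_{x,y}(E)\notin\R_{\leq 0}$ for every spherical sheaf $E$, which is what we needed. (Equivalently, $y>1/\sqrt d$ is precisely the inequality $\omega^2=(yH)^2=2dy^2>2$, the standard sufficient condition of \cite[Lemma 6.2]{Bri08}.)

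I do not expect a genuine obstacle: the substantive input---that a tilt $\sigma_{\beta,\omega}$ is already a stability condition once no spherical sheaf lands in $\R_{\leq 0}$---is quoted from \cite{Bri07,Bri08}. The two points requiring care are both features of the Picard rank one setting: that there are no spherical torsion sheaves, so every spherical $E$ has rank $\geq 1$ (without which $dy^2r^2>1$ could fail), and that the inequality above is \emph{strict}, which is what upgrades the weak stability function to an honest stability condition.
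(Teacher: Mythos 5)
Your proof is correct and follows essentially the same route as the paper: the lemma is exactly the sufficient condition $\omega^2=2dy^2>2$ of \cite[Lemma 6.2]{Bri08}, which is what the paper invokes. Your explicit check of the spherical-sheaf criterion (forcing $x=\ell/r$ and computing $\mathrm{Re}\,Z=(dr^2y^2-1)/r>0$) is precisely the computation recorded in Remark~\ref{RestrictionStab}, so nothing is missing.
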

\begin{rem}\label{RestrictionStab}
The restriction $y>\frac{1}{\sqrt{d}}$ is sufficient but not necessary for $\sigma_{x,y}$ to be a stability condition. A more precise requirement is that $Z_{xH,yH}(E)\not\in\R_{\leq0}$ for any spherical sheaf $E$ (see \cite[Lemma 6.2]{Bri08}). If $\vbf(E)=(r,c_1,s)$ then $Z_{xH,yH}(E)\not\in\R_{\leq0}$ amounts to $y>\frac{1}{r\sqrt{d}}$ when $x=c_1/r$. As a result, we see that for $0<y\leq \frac{1}{\sqrt{d}}$, $\sigma_{0,y}$ is not a stability condition by considering the spherical object $\Ocal_S[1]$, whereas for $x\notin\mathbb{Q}$, $\sigma_{x,y}$ is a stability condition for all $y>0$.
\end{rem}
We will refer to the set of geometric stability conditions of the form $\sigma_{x,y}$ as \emph{the $xy$-plane}. By the definition of $\mathcal{H}_W$ and $\Gamma$, the wall $W$ intersects the $xy$-plane at a semicircle 
\begin{align*}
    \left(x+\frac{1}{\Gamma}\right)^2+y^2=\frac{1}{\Gamma^2}-n^2.
\end{align*}
It is easy to see that as $0<\Gamma<\frac{1}{n}$ increases, we obtain a collection of nested semicircles decreasing in radii.

Let $\Phi_n$ be the autoequivalence of $\Db(S)$ defined in Section~\ref{pre-bm}. Then $\Phi_n(W)$ is a wall for $\vbf':=\Phi_n(\vbf)=(0,n,-1)$. Similarly, any wall $W'$ for $\vbf'$ gives a wall $\Phi_n^{-1}(W')$ for $\vbf$. We say that \emph{$W'$ corresponds to $\Gamma$} if the rational number $\Gamma$ is associated to $\Phi_n^{-1}(W')$ as a wall for $\vbf$. A wall $W'$ for $\vbf'$ corresponding to $\Gamma$ intersects the $xy$-plane at
\begin{align*}
    \left(x+\frac{1}{2dn}\right)^2+y^2=\left(\frac{1}{2dn}\right)^2+\frac{1}{2d^2\left(\frac{1}{\Gamma}-n\right)n}.
\end{align*}
This yields a collection of nested semicircles whose radii increase with $\Gamma$. 

Finally, the following result justifies our use of Bridgeland stability conditions to understand $\Mov(\hilb{dn^2+1})$.

\begin{prop}\cite[Proposition 4.5]{QS22a} and \cite[Theorem 10.8]{BM14a}
All birational models of $\hilb{dn^2+1}$ arise as moduli spaces of stable objects with Mukai vector $\vbf$ and their birational transformations are induced by crossing flopping walls for $\vbf$ in $\Stab^\dagger(S)$ (or the $xy$-plane).

The above statements remain true if we replace $\hilb{dn^2+1}$ with $M(0,n,-1)$ and $\vbf$ with $\vbf'$.
\end{prop}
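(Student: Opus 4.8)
The plan is to assemble the statement from the Bayer--Macr\`i wall-crossing machinery and the Hodge-theoretic description of movable cones; since the two paragraphs of the Proposition become identical after applying the autoequivalence $\Phi_n$, it suffices to treat $\hilb{dn^2+1}$ with $\vbf=(1,0,-dn^2)$ and then transport. First I would fix a basepoint: under $\hilb{dn^2+1}\cong M_H(1,0,-dn^2)=M_H(\vbf)$, Bridgeland's comparison theorem \cite[Proposition 14.2]{Bri08} provides a geometric stability condition $\sigma_0=\sigma_{s_0H,tH}\in\Stab^\dagger(S)$ (for suitable $s_0$ with $\mu_{s_0H,H}(\vbf)>0$ and $t\gg0$) with $M_{\sigma_0}(\vbf)\cong\hilb{dn^2+1}$. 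As $\vbf$ is primitive with $\vbf^2=2dn^2>0$, \cite[Theorem 1.3(a)]{BM14a} realizes every $M_\sigma(\vbf)$, $\sigma$ generic, as a hyperk\"ahler manifold with $\NS(M_\sigma(\vbf))\cong\vbf^\perp$ via $\theta_\vbf$.

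Next I would invoke the global structure theorem \cite[Theorem 10.8]{BM14a}, together with the wall analysis of \cite[Sections 5--6]{BM14b}: there is a piecewise-linear continuous map $\ell\colon\Stab^\dagger(S)\to\NS(\hilb{dn^2+1})_\R$, compatible with the locally finite wall-and-chamber decomposition of the source determined by $\vbf$, which identifies the $\ell$-image of each chamber with the ample cone of the corresponding birational model $M_\sigma(\vbf)$ (transported to $\hilb{dn^2+1}$ along the composite of the intervening wall-crossing birational maps), and whose total image is the interior of $\Mov(\hilb{dn^2+1})$. By the description $\Mov(\hilb{dn^2+1})=\langle\tilde H,\tilde H-\tfrac1nB\rangle$ recalled in Section~\ref{pre-bm}, every wall meeting the interior of $\Mov$ corresponds to a value $0<\Gamma<\tfrac1n$ and hence, by the criterion recalled in Section~\ref{pre-hilb}, is a flopping wall; the two boundary rays $\tilde H$ and $\tilde H-\tfrac1nB$ give instead the Hilbert--Chow morphism and the Lagrangian fibration.

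Then I would combine this with the Torelli-type description of the movable cone: birational hyperk\"ahler manifolds share the same movable cone \cite[Proposition 3.13]{Deb20}, and by \cite[Theorem 7]{HT19} that cone carries a locally polyhedral chamber decomposition whose top-dimensional chambers are exactly the (transported) ample cones of the birational models, each model occurring for exactly one chamber. Comparing with the preceding paragraph, every such chamber is hit by $\ell$, so every birational model of $\hilb{dn^2+1}$ is isomorphic to some $M_\sigma(\vbf)$, with the birational maps among them induced by crossing flopping walls for $\vbf$ in $\Stab^\dagger(S)$. That these walls can always be detected inside the $xy$-plane is \cite[Proposition 4.5]{QS22a}: since $\Pic(S)=\Z H$ has rank one, $\Halg(S,\Z)$ has rank three, each wall is cut out by a primitive rank-two hyperbolic sublattice of $\Halg(S,\Z)$ containing $\vbf$, and the two-parameter slice $\{\sigma_{x,y}\}$ is a transversal to the $\widetilde{\mathrm{GL}}_2^+(\R)$-action meeting every chamber whose $\ell$-image lies in the interior of $\Mov(\hilb{dn^2+1})$; along this slice a wall appears as the semicircle $(x+\tfrac1\Gamma)^2+y^2=\tfrac1{\Gamma^2}-n^2$. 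Finally, for the second paragraph: $\Phi_n$ is an autoequivalence of $\Db(S)$, so it acts on $\Stab^\dagger(S)$ \cite[Lemma 8.2]{Bri07}, carrying the wall-and-chamber structure for $\vbf$ to that for $\vbf'=\Phi_n(\vbf)=(0,n,-1)$ and identifying $\hilb{dn^2+1}=M_{\sigma_0}(\vbf)$ with $M_{\Phi_n(\sigma_0)}(\vbf')$; since $\hilb{dn^2+1}$ and $M(0,n,-1)$ are birational they have the same birational models and movable cone, so the argument applies verbatim with $(\vbf,\sigma_0)$ replaced by $(\vbf',\Phi_n(\sigma_0))$, using the corresponding semicircle formula for $\vbf'$ recorded in Section~\ref{pre-hilb}.

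The step I expect to be the main obstacle is the $xy$-plane reduction: everything else is a bookkeeping assembly of quoted results, but verifying that passing from $\Stab^\dagger(S)$ to the two-dimensional slice $\{\sigma_{x,y}\}$ loses no birational model requires a genuine argument exploiting the $\widetilde{\mathrm{GL}}_2^+(\R)$-action, the explicit shape of the geometric chamber, and the rank-one hypothesis on $\Pic(S)$ --- which is precisely the content of \cite[Proposition 4.5]{QS22a} and which I would quote rather than reprove.
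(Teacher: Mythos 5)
Your assembly is correct in substance and, like the paper itself, ultimately rests on quoting \cite[Theorem 10.8]{BM14a} together with \cite[Proposition 4.5]{QS22a}; the difference lies in how the two halves are organized. The paper's own argument is a concrete path sketch rather than the abstract bookkeeping you give: for $\vbf$ one follows the vertical path $\sigma_{-n+\epsilon,t}$, $t>0$, and for $\vbf'$ the path $\sigma_{-\epsilon,t}$, with $\epsilon$ a small positive \emph{irrational} number. Because the walls in the $xy$-plane are finitely many nested semicircles and irrationality of the $x$-coordinate guarantees $\sigma_{x,y}$ is a stability condition for all $y>0$ (Remark~\ref{RestrictionStab}), each single path crosses every flopping wall, so the moduli spaces along it realize every birational model; this is the mechanism behind the $xy$-plane claim that you defer entirely to the quoted \cite[Proposition 4.5]{QS22a} (acceptable, since the paper quotes it too, but worth knowing it is an explicit two-line path argument rather than a transversality statement about the $\widetilde{\mathrm{GL}}_2^+(\R)$-action). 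The one step of yours that needs adjustment is the treatment of the second paragraph by transport along $\Phi_n$: this gives the $\Stab^\dagger(S)$ statement only once you know $\Phi_n$ preserves the distinguished component (the paper, when it needs this, verifies geometricity of the image stability condition by checking stability of skyscrapers, as in Corollary~\ref{Rank1Phi}), and it does not by itself give the $xy$-plane statement for $\vbf'$, since $\Phi_n$ does not preserve the family $\sigma_{x,y}$ — writing ``replace $\sigma_0$ by $\Phi_n(\sigma_0)$'' leaves you outside the slice you are trying to cover. The cleaner route, and the one the paper takes, is to run the same argument directly for $\vbf'$: Bridgeland's large-volume comparison \cite[Proposition 14.2]{Bri08} applies to the torsion vector $\vbf'$ (its slope is $+\infty$), so $M_{\sigma_{-\epsilon,t}}(\vbf')=M(0,n,-1)$ for $t\gg0$, and the path $\sigma_{-\epsilon,t}$ crosses all the nested semicircular walls for $\vbf'$ recorded in Section~\ref{pre-hilb}. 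With that substitution your proposal matches the intended proof.
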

The idea of the proof is as follows: fix a small positive irrational number $\epsilon$. For the first statement, one follows the path $\sigma_{-n+\epsilon,t}$ with $t>0$. Then $M_{\sigma_{-n+\epsilon,t}}(\vbf)$ will go through all of the models of $\hilb{dn^2+1}$. Similarly, for the second statement, one follows the path $\sigma_{-\epsilon,t}$ with $t>0$. Then $M_{\sigma_{-\epsilon,t}}(\vbf')$ will go through all of the models of $M(0,n,-1)$.

Thus to understand the birational geometry of $\hilb{dn^2+1}$ or $M(0,n,-1)$, it suffices to understand the crossing of flopping walls for $\vbf$ or $\vbf'$ in the $xy$-plane. \textbf{For the rest of this paper, by a wall we will mean a flopping wall unless it is specified otherwise.}

\section{Primitive Beauville-Mukai systems $M(0,1,-1)$}
In this section we give a full description of the birational geometry of $M(0,1,-1)$. Set $\vbf'=(0,1,-1)$. Then $\vbf(\Ocal_S)=(1,0,1)$ gives a wall for $\vbf'$ with associated $\Gamma_0=\frac{2d}{2d+1}$. The intersection $W_0'$ of this wall with the $xy$-plane is given by
\begin{align*}
    \left(x+\frac{1}{2d}\right)^2+y^2=\left(\frac{1}{2d}\right)^2+\frac{1}{d}.
\end{align*}
By \cite[Lemma 6.1]{Bay18}, $W'_0$ is the largest wall for $\vbf'$ in the $xy$-plane. Hence there is no wall in $\Mov(\hilb{d+1})$ with $\Gamma>\frac{2d}{2d+1}$ and the ray through $\tilde{H}-\frac{2d}{2d+1}B$ is adjacent to the chamber corresponding to $M(0,1,-1)$ in $\Mov(\hilb{d+1})$. 
\begin{thm}\label{BM1}
The movable cone $\Mov(\hilb{d+1})$ has only two chambers, corresponding to $\hilb{d+1}$ and $M(0,1,-1)$. The chambers are separated by the ray through $\tilde{H}-\frac{2d}{2d+1}B$. 
\end{thm}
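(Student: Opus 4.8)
The plan is to show that there are no flopping walls for $\vbf'=(0,1,-1)$ in the open region $0<\Gamma<\frac{2d}{2d+1}$ of $\Mov(\hilb{d+1})$, which together with the already-established facts — that $\tilde H$ bounds the chamber of $\hilb{d+1}$, that $\tilde H - \frac{1}{n}B = \tilde H - B$ bounds the chamber of $M(0,1,-1)$, and that $W_0'$ (with $\Gamma_0 = \frac{2d}{2d+1}$) is the largest wall — forces the chamber decomposition to consist of exactly the two chambers separated by the ray through $\tilde H - \frac{2d}{2d+1}B$. So the crux is: \emph{the only flopping wall for $\vbf'$ is $W_0'$.}

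First I would transport the problem to $\vbf = (1,0,-d)$ via $\Phi_1$, so that walls for $\vbf'$ correspond to walls for the Hilbert scheme $\hilb{d+1}$, and it suffices to classify flopping walls for $\vbf$. By \cite[Proposition 5.1]{BM14b} and the discussion in Section~\ref{pre-hilb}, such a wall is cut out by a primitive rank two hyperbolic sublattice $\Hcal_W \subseteq \Halg(S,\Z)$ containing $\vbf$; pick a generator $\abf = (a, bH, c)$ of $\Hcal_W$ not proportional to $\vbf$. Following the standard numerical analysis (as in \cite[Theorem 5.7]{BM14b} and its adaptation in \cite{Bay18}), the condition that $W$ is an actual wall and that the relevant destabilizing sub/quotient has positive square imposes inequalities on $(a,b,c)$; the hyperbolicity of $\Hcal_W$ (equivalently, that the associated quadratic form on $\langle \vbf, \abf\rangle$ has signature $(1,1)$) gives a bounded region. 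The key computation is that the discriminant $\vbf^2 = 2d$ is small enough — indeed fixed — that this region contains essentially only the single solution corresponding to $\abf = \vbf(\Ocal_S) = (1,0,1)$ (up to the action of integral isometries fixing $\vbf$, i.e. up to replacing $\abf$ by $\abf + k\vbf$). Concretely, $\langle \vbf, \abf\rangle$ has Gram matrix $\begin{pmatrix} -2d & b \cdot 0 - a(-d) - 1\cdot c \\ * & 2db^2 - 2ac\end{pmatrix}$ after normalizing $b$; one checks that the Bayer--Macrì potential-wall inequality $\vbf^2 \geq (\vbf - \abf)^2 + \abf^2 - 2$ or the equivalent effectivity/roundedness bounds pin down $b$ and then $a,c$.

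The second ingredient is to rule out \emph{totally semistable} or fake walls in this range and to confirm that $W_0'$ is genuinely a flopping (not divisorial) wall; but the latter is already handled by the cited fact that its associated $\Gamma_0 = \frac{2d}{2d+1}$ satisfies $0<\Gamma_0<1$, hence by the criterion recalled in Section~\ref{pre-hilb} it is a flopping wall, and Theorem~\ref{BM1} (the first sentence, on the number of chambers) together with the birational identification $\hilb{d+1} \dashrightarrow M(0,1,-1)$ recorded earlier then identifies the two chambers. For the totally semistable issue I would invoke Section~5 of the present paper (or note directly that for $\vbf^2 = 2d$ there is no spherical class $\sbf$ with $0 \le \sbf^2 < \ldots$ obstructing a single flop) — but since the statement only asserts the chamber count and the separating ray, it is enough to know (i) there is at least one wall, namely $W_0'$, giving at least two chambers, and (ii) no wall with larger $\Gamma$ and no wall with smaller $\Gamma$, giving at most two.

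The main obstacle I anticipate is the case analysis in the numerical classification: one must be careful that the inequalities cutting out potential walls are exactly the ones from \cite[Section 5]{BM14b} (boundedness of $\abf$, positivity of squares of the Jordan--Hölder factors' classes), and that no exotic solution sneaks in for small $d$ — in particular $d=1$ should be checked by hand and compared against Mukai's classical picture (the simple flop along $\Pbb^2$). Once the lattice-theoretic bookkeeping confirms $\abf = (1,0,1)$ is the unique possibility, the rest is formal.
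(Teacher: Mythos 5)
Your high-level strategy matches the paper's: the side $\Gamma>\Gamma_0=\frac{2d}{2d+1}$ is already disposed of by the cited fact that $W_0'$ is the largest wall for $\vbf'$, so everything hinges on showing there is \emph{no} wall with $\Gamma<\Gamma_0$, i.e.\ no wall for $\vbf=(1,0,-d)$ larger than $W_0$ in the $xy$-plane. But this crucial step is exactly what your proposal does not actually prove. You reduce it to "lattice-theoretic bookkeeping": classify primitive hyperbolic rank two sublattices $\Hcal_W\ni\vbf$ and claim that, because $\vbf^2=2d$ is "fixed", the Bayer--Macr\`i inequalities "pin down" a unique class. No such mechanism is exhibited, and fixing $\vbf^2$ cannot by itself be the reason: for $n\geq 2$ the vector $(1,0,-dn^2)$ also has fixed square, yet it admits many flopping walls (Section~\ref{Rank1Wall}), so the finiteness/uniqueness you assert must come from something special to $n=1$ that your sketch never identifies. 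Moreover your candidate for the unique class is off: after transporting by $\Phi_1$ to $\vbf$, the Brill--Noether wall's lattice contains $\pm(1,-1,d+1)$, not $\vbf(\Ocal_S)=(1,0,1)$; the lattice $\langle\vbf,(1,0,1)\rangle$ corresponds to $\Gamma=0$, i.e.\ the boundary ray $\tilde H$, not to a flopping wall in the interior of $\Mov(\hilb{d+1})$. The garbled Gram-matrix computation and the remark that "$d=1$ should be checked by hand" confirm that the case analysis has not been carried out, so the proposal has a genuine gap at its central claim.

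For comparison, the paper's mechanism is short and specific to $n=1$: any wall for $\vbf$ larger than $W_0$ must cross the vertical line $x=-1$ at some $y>\tfrac{1}{\sqrt d}$, where $\sigma_{-1,y}$ is an honest stability condition (Lemma~\ref{isStab}); the induced decomposition $(1,0,-d)=(a,b,c)+(1-a,-b,-d-c)$ with both summands classes of objects of $\Coh^{-1}(S)$ forces $0\leq a+b\leq 1$, while equality of the arguments of the three central charges forces the imaginary parts to satisfy $0<a+b<1$, contradicting $a+b\in\Z$. This integrality-of-imaginary-parts argument (which for $n\geq 2$ only yields $1\leq b+na\leq n-1$ and hence genuinely more walls) is the missing ingredient your lattice classification would need to replace; without it, or an equally concrete bound on the possible destabilizing classes, the chamber count does not follow. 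Your remaining remarks (totally semistable and fake walls, identification of the two extreme chambers) are fine but peripheral, since only flopping walls create chambers in the movable cone.
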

\begin{rem}\label{BM10.3}
The theorem also follows from \cite[Proposition 10.3]{BM14a}, whose proof relies on a careful analysis of the stability of $\Ical_\xi$ for $\xi\in \hilb{d+1}$ and the nef cone of $\hilb{d+1}$. We provide an alternative proof.
\end{rem}
\begin{proof}
The wall $W_0$ for $\vbf=(1,0,-d)$ corresponding to $\Gamma=\frac{2d}{2d+1}$ intersects the $xy$-plane in the semicircle given by
\begin{align*}
    \left(x+1+\frac{1}{2d}\right)^2+y^2=\left(1+\frac{1}{2d}\right)^2-1.
\end{align*}
Now $W_0$ intersect the line $x=-1$ at $y^2=\frac{1}{d}$. To prove the theorem, it suffices to show that $W_0$ is the largest wall for $\vbf$ in the $xy$-plane. We can achieve this by showing that the line $x=-1$ does not intersect any wall at $y>\frac{1}{\sqrt{d}}$. Suppose otherwise, and that at $(-1,y)$ the bigger wall corresponds to the decomposition of Mukai vectors
\begin{align*}
    (1,0,-d)=(a,b,c)+(1-a,-b,-d-c),
\end{align*}
where $a,b,c\in \Z$. Note that the two Mukai vectors on the right must be those of objects in $\Coh^{-1}(S)$, and this implies $0\leq a+b$ and $a+b\leq 1$, respectively. By Lemma~\ref{isStab}, $\sigma_{-1,y}$ is a stability condition. We have
\begin{align*}
    &Z_{-1,y}(1,0,-d)=(dy^2)+\sqrt{-1}(2dy),\\
    &Z_{-1,y}(a,b,c)=(-2db-c+ad(y^2-1))+\sqrt{-1}(2dy(b+a)),\\
    &Z_{-1,y}(1-a,-b,-d-c)=(2db+d+c+(1-a)d(y^2-1))+\sqrt{-1}(2dy(1-a-b)).
\end{align*}
By our assumption, these three complex numbers have the same argument. In particular, $a+b>0$ and $1-a-b>0$. This contradicts $a+b\in\Z$.
\end{proof}
\begin{rem}\label{DisconnectWall}
We notice that the wall for $\vbf$ corresponding to $\Gamma=\frac{2d}{2d+1}$ is disconnected: for a stability condition of the form $\sigma_{x,y}$ on the wall, let $C_{\sigma_{x,y}}$ be its cone of effective classes (see \cite[Proposition 5.5]{BM14b}). One can easily check that the spherical class $(1,-1,d+1)\in C_{\sigma_{x,y}}$ for $x<-1$ and that $(1,-1,d+1)\notin C_{\sigma_{x,y}}$ for $x>-1$ (note that these two components are separated by $x=-1$ as $\sigma_{-1,y}$ is not a stability condition for $y\leq 1/\sqrt{d}$).
\end{rem}
An immediate consequence of the theorem is that the indeterminacy of the birational map $\Phi_1: \hilb{d+1}\dashrightarrow M(0,1,-1)$ can be resolved by letting $\hilb{d+1}$ cross a single wall.
\begin{cor}\label{Rank1Phi}
Crossing the wall $W_0$ given by
$$\left(x+1+\frac{1}{2d}\right)^2+y^2=\left(1+\frac{1}{2d}\right)^2-1$$
at $x>-1$ induces a stratified flop $F_1: \hilb{d+1}\dashrightarrow \hilb{d+1}_\dagger$. Moreover, the induced birational map $\Phi_1:\hilb{d+1}_\dagger\dashrightarrow M(0,1,-1)$ is actually an isomorphism.
\end{cor}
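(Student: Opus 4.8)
The plan is to exploit the fact, established in Theorem~\ref{BM1}, that $\Mov(\hilb{d+1})$ has exactly two chambers separated by the single ray through $\tilde{H}-\frac{2d}{2d+1}B$. The wall $W_0$ for $\vbf=(1,0,-d)$ is precisely the flopping wall realizing this ray (by the defining relation between $\Gamma$ and $\mathcal{H}_{W_0}$ recalled in Section~\ref{pre-hilb}), so crossing it from the chamber adjacent to $\tilde H$ to the chamber adjacent to $\tilde H - \frac{2d}{2d+1}B$ produces a projective hyperk\"ahler manifold $M_\sigma(\vbf)$ birational to $\hilb{d+1}$; call it $\hilb{d+1}_\dagger$. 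Because $W_0$ is a flopping wall (its $\Gamma$ lies in $(0,\tfrac{1}{n})=(0,1)$), the induced birational map $F_1\colon \hilb{d+1}\dashrightarrow \hilb{d+1}_\dagger$ is a stratified flop in the sense of \cite{BM14b}; I would cite the wall-classification there rather than reprove it. Here one should pick the crossing at $x>-1$, which by Remark~\ref{DisconnectWall} is a genuine wall component (the class $(1,-1,d+1)$ is not effective there), whereas the $x<-1$ component is a totally semistable phenomenon coming from the spherical class and does not contribute a flop on this side.

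The remaining content is that $\Phi_1\colon \hilb{d+1}_\dagger \dashrightarrow M(0,1,-1)$ is an \emph{isomorphism}. For this I would run the following argument. On the $M(0,1,-1)$-side, by the last Proposition of Section~\ref{pre-hilb} and Proposition~\ref{dual} (or directly via $\Phi_1$), the models of $M(0,1,-1)$ are the $M_\sigma(\vbf')$ for $\sigma$ ranging over the chambers for $\vbf'=(0,1,-1)$; and by \cite[Lemma 6.1]{Bay18} the largest wall $W_0'$ for $\vbf'$ is the one with $\Gamma_0=\frac{2d}{2d+1}$, so there is \emph{no} wall for $\vbf'$ with $\Gamma>\frac{2d}{2d+1}$. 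Translating through the correspondence $W'\mapsto \Phi_1^{-1}(W')$ and the fact that $W_0$ is the largest wall for $\vbf$, this means: the chamber for $\vbf$ lying on the other side of $W_0$ (the one that $\hilb{d+1}_\dagger$ represents) is the same chamber that represents $M(0,1,-1)$. Concretely, $\Phi_1$ takes a generic geometric stability condition $\sigma$ in that chamber to a stability condition in the (unique) chamber for $\vbf'$ adjacent to $\tilde H - \frac{2d}{2d+1}B$, and $M_{\Phi_1(\sigma)}(\vbf')=M(0,1,-1)$ by the comparison with Gieseker stability for $t\gg 0$ (Proposition~\cite[Proposition 14.2]{Bri08}). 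Since $\Phi_1$ is an autoequivalence of $\Db(S)$, it induces an isomorphism of moduli spaces $M_\sigma(\vbf)\xrightarrow{\ \sim\ } M_{\Phi_1(\sigma)}(\Phi_1\vbf)=M_{\Phi_1(\sigma)}(\vbf')$, i.e. $\hilb{d+1}_\dagger \cong M(0,1,-1)$; and one checks this isomorphism agrees with the rational map $\Phi_1$ on the common open locus where both are defined by the explicit formula $\xi\mapsto \Ocal_D(-\xi)\otimes\Ocal_S(1)|_D$.

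The main obstacle, and the step deserving the most care, is verifying that $\Phi_1(\sigma)$ genuinely lands in the interior of a chamber for $\vbf'$ whose moduli space is $M(0,1,-1)$ — equivalently, that crossing $W_0$ lands us in the terminal chamber and there are no further walls for $\vbf$ (hence for $\vbf'$) beyond $W_0$. This is exactly where Theorem~\ref{BM1}, together with Bayer's identification of $W_0'$ as the outermost wall for $\vbf'$, does the work: the two-chamber structure forces the post-flop model to be the Lagrangian-fibration model. A minor secondary point is to confirm genericity of the chosen $\sigma$ (pick $\sigma=\sigma_{-1+\epsilon,t}$ for small irrational $\epsilon>0$ and suitable $t$, which is a stability condition by Lemma~\ref{isStab} and avoids all walls for $\vbf$), and that the wall-crossing at $x>-1$ is the relevant component of $W_0$, as noted above. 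Once these are in place, the isomorphism is immediate from functoriality of moduli spaces under autoequivalences.
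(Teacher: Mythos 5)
Your first paragraph is fine: crossing $W_0$ at $x>-1$ (not totally semistable by Remark~\ref{DisconnectWall}) along $\sigma_{-1+\epsilon,t}$ produces the model $\hilb{d+1}_\dagger$, and \cite{BM14b} identifies the wall as flopping; this matches the paper, except that the paper additionally identifies the exceptional locus $E_1=\{\xi\,|\,h^0(\Ical_\xi(1))\geq 2\}$ via Bayer's Brill--Noether analysis to justify the word ``stratified''.

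The gap is in the second half. You assert that $\Phi_1$ takes a generic stability condition $\sigma$ just below $W_0$ to a stability condition for which $M_{\Phi_1(\sigma)}(\vbf')=M(0,1,-1)$ ``by the comparison with Gieseker stability for $t\gg0$'', and you claim this is equivalent to the two-chamber statement of Theorem~\ref{BM1}. It is not. The Gieseker comparison \cite[Proposition 14.2]{Bri08} applies only to stability conditions that are geometric (up to the $\widetilde{\mathrm{GL}}_2^+(\R)$-action) and lie above the largest wall $W_0'$ for $\vbf'$ in the $xy$-plane; an autoequivalence need not send a geometric stability condition to a geometric one, and the chamber structure of $\Mov(\hilb{d+1})$ says nothing about where $\Phi_1(\sigma)$ sits in $\Stab^\dagger(S)$ --- there are many Stab-chambers for $\vbf'$ whose moduli spaces are abstractly isomorphic to one of the two models but whose parametrized objects are not the Gieseker-stable sheaves, and only the Gieseker chamber makes the rational map $\Phi_1$ extend to the identity on a common open set. (Indeed, applying $\Phi_1$ to a stability condition just \emph{above} $W_0$ gives a moduli space isomorphic to $\hilb{d+1}$, not $M(0,1,-1)$, so the conclusion genuinely depends on more than Mov-chamber counting.) The paper closes exactly this gap by a concrete verification: the $\sigma_{t_-}$-stable objects of class $(-1,1,-d)$ are $\RHom(\Ical_p,\Ocal_S(-1))[1]$, and $\Phi_1(\RHom(\Ical_p,\Ocal_S(-1))[1])\cong\Ocal_p$, so all skyscraper sheaves are $\Phi_1(\sigma_{t_-})$-stable; hence $\Phi_1(\sigma_{t_-})$ is geometric up to $\widetilde{\mathrm{GL}}_2^+(\R)$, lies above the wall with $\Gamma_0=\frac{2d}{2d+1}$, and therefore its moduli space is the Gieseker space $M(0,1,-1)$. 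Your argument could alternatively be repaired purely chamber-theoretically, but that requires the precise Hassett--Tschinkel/Markman correspondence between chambers and pairs (model, birational map) together with computing $\Phi_1^*\mathrm{Nef}(M(0,1,-1))$ inside $\Mov(\hilb{d+1})$ --- neither of which appears in your proposal; as written, the key step is asserted rather than proved.
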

\begin{proof}
 When $x>-1$, $W_0$ is a flopping wall which is not totally semistable by Remark~\ref{DisconnectWall}. For $\epsilon>0$ small, let $\sigma_t=\sigma_{-1+\epsilon,t}$ for $t>0$. Suppose $\sigma_{t}\cap W_0=\sigma_{t_0}$. Then $M_{\sigma_{t_+}}(\vbf)=\hilb{d+1}$ for any $t_+>t_0$. Let $0<t_-<t_0$ be close to $t_0$ and $\hilb{d+1}_\dagger:=M_{\sigma_{t_-}}(\vbf)$. It is clear that $\hilb{d+1}_\dagger$ does not depend on the choices of $\epsilon$ and $t_-$.

The exceptional locus of $M(0,1,-1)$ for crossing the wall with $\Gamma=\frac{2d}{2d+1}$ was described in \cite[Section 6-8]{Bay18} as the Brill-Noether locus $\mathrm{BN}_{d-1}(|H|)$, with the natural stratification given by the number of global sections. Similarly, one can show that the exceptional locus in $M_{\sigma_{t_+}}$ for crossing the wall $W_0$ is
$$E_1:=\{\xi\in\hilb{d+1}\;|\; h^0(\Ical_\xi(1))\geq2\}.$$
Note that $E_1$ also has a natural stratification, given by $h^0(\Ical_\xi(1))$, i.e., the dimension of the subsystem of curves in $|H|$ containing $\xi$. The wall-crossing map $F_1: \hilb{d+1}\dashrightarrow \hilb{d+1}_\dagger$ is then the stratified flop at $E_1$.\ 

The equivalence $\Phi_1$ induces an isomorphism 
\begin{align*}
    M_{\sigma_{t_-}}(\vbf)\cong M_{\Phi_1(\sigma_{t_-})}(\vbf').
\end{align*}
We note that $\Phi_1(-1,1,-d)=(0,0,1)$, and $\sigma_{t_-}$-stable objects with Mukai vector $(-1,1,-d)$ are of the form $\RHom(\Ical_p,\Ocal_S(-1))[1]$ for $p\in S$. It is easy to check that
$$\Phi_1(\RHom(\Ical_p,\Ocal_S(-1))[1])\cong \Ocal_p,$$
and thus all skyscraper sheaves are $\Phi_1(\sigma_{t_-})$-stable and $\Phi_1(\sigma_{t_-})$ is geometric. Since the action of $\widetilde{\mathrm{GL}}_2^+(\R)$ does not change objects in the moduli space, $M_{\Phi_1(\sigma_{t_-})}(\vbf')=M_{\sigma_{x',y'}}(\vbf')$ for some $x'\in\R$ and $y'>0$. It is clear that $(x',y')$ is above the wall with $\Gamma=\frac{2d}{2d+1}$ for $\vbf'$. Thus $M_{\sigma_{x',y'}}(\vbf')=M(0,1,-1)$. In conclusion, $\Phi_1:\hilb{d+1}_\dagger\to M(0,1,-1)$ is an isomorphism. 
\end{proof}

\section{Rank one walls}\label{Rank1Wall}
Let $n\geq 2$ throughout this section. Set $\vbf=(1,0,-dn^2)$ and $\vbf'=(0,n,-1)$. We introduce so called `rank one' walls, which are always present in the movable cone of $\hilb{dn^2+1}$. These are, roughly, flopping walls given by Mukai vectors of rank one (a precise definition of the rank of a wall will be given later, Definition~\ref{RankDef}). The corresponding flops and their exceptional loci will be easy to describe, see Section~7. We divide the rank one walls into three types: the Brill-Noether (middle) wall, those on the Hilbert scheme side, and those on the Beauville-Mukai system side.

\subsection{The Brill-Noether (middle) wall} We start with the reincarnation of the unique flopping wall of the primitive Beauville-Mukai system in the case when $n\geq 2$, which we call the Brill-Noether wall. Let $\sbf'=\vbf(\Ocal_S)=(1,0,1)$. Then by \cite[Theorem 5.7(b)]{BM14b}, $\sbf'$ gives a flopping wall $\Wcal'$ for $\vbf'$ corresponding to $0<\Gamma_0=\frac{2dn}{2dn^2+1}<\frac{1}{n}$. Using  similar arguments as in \cite[Section 6-8]{Bay18}, one can see that $\Wcal'$ induces a stratified Mukai flop at (some strict transform of) the Brill-Noether locus
$$\Ycal_0=\{\Ecal\in M(0,n,-1)\;|\; h^0(\Ecal)\geq 1 \}.$$
See also the examples in Section 8.

Equivalently, let $\sbf=\Phi_n^{-1}(\sbf')=-\vbf(\Ocal_S(-n))= (-1,n,-dn^2-1)$. Then $\sbf$ gives rise to a flopping wall $\Wcal$ for $\vbf$ corresponding to the same $\Gamma_0=\frac{2dn}{2dn^2+1}$. Similarly to the previous section, $\Wcal$ induces a stratified Mukai flop at (some strict transform of) the locus
$$Y_0=\{\xi\in \hilb{dn^2+1}\;|\; h^0(\Ical_\xi(n))\geq 2 \}.$$

Next we describe two other types of rank one walls located on opposite sides of the Brill-Noether wall in the movable cone.

\subsection{The $\hilb{dn^2+1}$ side.}
Let $\abf=(1,-c_1,s)$, where $1\leq c_1\leq n-1$ and
$$2dnc_1-dn^2+1\leq s\leq dc_1^2+1.$$
Let $\bbf=\vbf-\abf=(0,c_1,-dn^2-s)$. We have
\begin{itemize}
    \item if $s=dc_1^2+1$, then $\abf^2=-2$ and $0<(\abf,\vbf)=dn^2-s<\frac{\vbf^2}{2}$;
    \item if $s\leq dc_1^2$, then $\abf^2=2dc_1^2-2s\geq0$, $\bbf^2=2dc_1^2>0$, and $0<(\abf,\vbf)=dn^2-s<\vbf^2$. In addition, 
    \begin{align*}
        \abf^2\cdot \vbf^2-(\abf,\vbf)^2=(2dnc_1+dn^2+s)(2dnc_1-dn^2-s)<0.
    \end{align*}
\end{itemize}

By \cite[Theorem 5.7(b)]{BM14b}, $\abf$ gives a flopping wall for $\vbf$. One can easily compute that the corresponding $0<\Gamma=\frac{2dc_1}{dn^2+s}<\Gamma_0$. We note that different $(c_1,s)$ can result in the same $\Gamma$, hence the same wall. An example of this situation is the wall with $\Gamma=\frac27$ in \cite{QS22a}.\ 

\begin{rem}\label{rem1}
As we will see in the proof of the next proposition, we will cross the above walls along the path $x=-n$, and $\vbf=\abf+\bbf$ is the induced decomposition of $\vbf$ along $x=-n$. The bound $1\leq c_1\leq n-1$ arises when we require $\abf$ and $\bbf$ to be the Mukai vectors of objects in $\Coh^{-n}(S)$. The inequalities for $s$ are designed to ensure that $\abf$ gives a flopping wall for $\vbf$. Indeed, $2dnc_1-dn^2+1\leq s$ is equivalent to the hyperbolicity of the lattice generated by $\vbf$ and $\abf$, while $s\leq dc_1^2+1$ is equivalent to the Bogomolov inequality $\abf^2\geq -2$. 
\end{rem}

Among these rank one walls, we have 
\begin{align*}
    \Gamma&=\frac{2dc_1}{dn^2+s}\geq \frac{2dc_1}{dn^2+dc_1^2+1}=\frac{2}{c_1+\frac{n^2+\frac1d}{c_1}}.
\end{align*}
 The function $f(x)=x+\frac{n^2+\frac1d}{x}$ is decreasing on the interval $[1,n-1]$, thus $\Gamma\geq\frac{2}{1+n^2+\frac1d}$, where the right hand side can be realized by the wall given by $(1,-1,d+1)$. Thus the wall given by $(1,-1,d+1)$ is the largest among rank one walls. We now show that it is in fact largest among all walls for $\vbf$.
\begin{prop}\label{LargestWallS}
The largest wall in the $xy$-plane for $\vbf$ is the rank one wall given by $(1,-1,d+1)$. 
\end{prop}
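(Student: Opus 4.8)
The plan is to mimic the strategy used in the proof of Theorem~\ref{BM1}: fix the vertical path $x = -n$ in the $xy$-plane, and show that no wall for $\vbf$ crosses this line above the point where the wall given by $(1,-1,d+1)$ does. First I would compute where the wall $W_1$ given by $\abf_0 = (1,-1,d+1)$ (equivalently $\Gamma = \frac{2}{1+n^2+\frac1d}$) meets $x=-n$; plugging into the semicircle equation $\left(x+\frac{1}{\Gamma}\right)^2 + y^2 = \frac{1}{\Gamma^2} - n^2$ gives an explicit value $y_1^2$, which I expect to work out to $\frac1d$ again (since $\bbf_0 = \vbf - \abf_0 = (0,1,-dn^2-d-1)$ has the skyscraper-like imaginary part behavior along $x=-n$, paralleling the rank one case). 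By Lemma~\ref{isStab} and Remark~\ref{RestrictionStab}, $\sigma_{-n,y}$ is a stability condition for $y$ large enough (indeed for all $y > \frac{1}{\sqrt d}$, since $x=-n$ is rational but we only need $y$ above the relevant threshold), so the argument will be valid in the range we care about.

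Next, suppose for contradiction there is a strictly larger wall, i.e.\ one meeting $x=-n$ at some $(-n,y)$ with $y$ strictly bigger than the $y$-coordinate of $W_1$ at $x=-n$. At such a point the wall corresponds to a decomposition $\vbf = (1,0,-dn^2) = \abf + \bbf$ with $\abf = (a,b,c)$ and $\bbf = (1-a,-b,-dn^2-c)$, and since $\abf$, $\bbf$ must be Mukai vectors of objects in the tilted heart $\Coh^{-n}(S)$ along the line $x=-n$, one gets the constraints $0 \le an+b$ and $an+b \le 1$ (the analogue of $0 \le a+b \le 1$ in the $n=1$ case). I would then write out the three central charges $Z_{-n,y}(\vbf)$, $Z_{-n,y}(\abf)$, $Z_{-n,y}(\bbf)$ explicitly; on a wall all three share the same argument, and since $\mathrm{Im}\,Z_{-n,y}(\vbf) > 0$ we must have $\mathrm{Im}\,Z_{-n,y}(\abf) = 2dy(an+b) > 0$ and likewise $\mathrm{Im}\,Z_{-n,y}(\bbf) = 2dy(1-an-b) > 0$, forcing $0 < an+b < 1$, which contradicts $an+b \in \Z$ — exactly as in Theorem~\ref{BM1}. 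Hence no wall crosses $x=-n$ strictly above $W_1$, so $W_1$ is the largest wall for $\vbf$, and combined with the computation before the proposition (that $(1,-1,d+1)$ gives the largest $\Gamma$ among rank one walls) this finishes the claim.

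The one subtlety, and the place where I expect the real work to be, is verifying that the line $x = -n$ genuinely passes \emph{above} the wall $W_1$ — i.e.\ that $W_1$ does intersect $x=-n$ and that the intersection point has $y$-coordinate in the valid range $y > \frac{1}{\sqrt d}$ — and, relatedly, confirming that the tilting constraints along $x=-n$ really are $0 \le an + b \le 1$ rather than something weaker. For the first point one checks $\frac{1}{\Gamma_1} - n = \frac{1-n^2-\frac1d}{2} \cdot \frac1{\;} $\ldots, more directly: the center of $W_1$ is at $x = -\frac{1}{\Gamma_1} = -\frac{1+n^2+\frac1d}{2}$, which for $n\ge 2$ lies to the left of $-n$, and the radius $\sqrt{\frac{1}{\Gamma_1^2}-n^2}$ must exceed the horizontal distance $\left|-n + \frac{1}{\Gamma_1}\right| = \frac{1}{\Gamma_1} - n$ from the center to the line — this is automatic since $\frac{1}{\Gamma_1^2} - n^2 > \left(\frac{1}{\Gamma_1}-n\right)^2 \iff \frac{1}{\Gamma_1}+n > \frac{1}{\Gamma_1}-n \iff n > 0$. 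So $W_1$ meets $x=-n$, and at that point $y^2 = \frac{1}{\Gamma_1^2}-n^2 - \left(\frac{1}{\Gamma_1}-n\right)^2 = 2n\left(\frac{1}{\Gamma_1}-n\right) = 2n \cdot \frac{1+n^2+\frac1d - 2n^2}{2} \cdot$\ldots which I would simplify and check is indeed $> \frac1d$. For the tilting constraint, I would invoke the standard description of $\Coh^{-n}(S) = \langle \Coh^{-n}_{\le}(S)[1], \Coh^{-n}_{>}(S)\rangle$ via the slope $\mu_{-nH,yH}$, giving that any object $E$ in this heart has $\mathrm{Im}\,Z_{-n,y}(E) \ge 0$, with the degree-zero case handled by the convention that such $E$ are (shifts of) torsion or semistable of the boundary slope; translating $\mathrm{Im}\,Z_{-n,y}(E) \ge 0$ into Mukai-vector coordinates $(a,b,c)$ yields precisely $an + b \ge 0$, and applying this to both $\abf$ and $\bbf$ gives $0 \le an+b \le 1$. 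Once these two checks are in place the contradiction is immediate and the proof is complete.
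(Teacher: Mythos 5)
Your proposed argument breaks down exactly at the point you flagged as "the place where the real work is": the tilting constraint along $x=-n$. The imaginary part of $Z_{-n,y}(a,b,c)$ is $2dy(b+an)$, and the imaginary part of $Z_{-n,y}(\vbf)=Z_{-n,y}(1,0,-dn^2)$ is $2dny$ — the total "weight" is $n$, not $1$. So requiring both summands $\abf=(a,b,c)$ and $\bbf=(1-a,-b,-dn^2-c)$ to lie in the heart $\Coh^{-n}(S)$ gives $0\le an+b$ and $-b+(1-a)n\ge 0$, i.e.\ $0\le an+b\le n$, not $0\le an+b\le 1$. Consequently the same-argument condition only forces $1\le i:=an+b\le n-1$, which is perfectly compatible with integrality once $n\ge 2$, and no contradiction arises. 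Indeed the rank one walls themselves cross $x=-n$ (the wall for $(1,-1,k)$ meets it with $i=n-1$), so an argument showing that \emph{no} wall crosses $x=-n$ above some height cannot be as soft as in the $n=1$ case of Theorem~\ref{BM1}; your proposal would otherwise "prove" that no wall meets $x=-n$ at all, contradicting Proposition~\ref{CrossRankOneS}, where the path $\sigma_{-2,t}$ crosses $d+1$ such walls.

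The paper's proof therefore has to do genuine arithmetic beyond the $n=1$ template: for a hypothetical wall with $\Gamma<\gamma$ crossing $x=-n$ at height $y$, it writes the wall condition as $c=adn^2-2idn+M$ with $M=\frac{(an-i)dy^2}{n}\in\Z$, bounds $M$ from above by $\frac{i^2d+1}{a}$ using $\abf^2\ge -2$ and from below by $2d(an-i)\left(\frac{1}{\gamma}-n\right)$ using the assumed height, and solves the resulting quadratic inequality in $a$. Substituting $i\le n-1$ and $\gamma=\Gamma_+=\frac{2d}{dn^2+d+1}$ forces $a<2$, hence $a=1$, after which hyperbolicity and $\abf^2\ge-2$ show the wall is one of the rank one walls, among which $(1,-1,d+1)$ gives the largest. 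Your setup (the path $x=-n$, the semicircle computation, the central charges) is the right starting point, but without the integrality/Bogomolov sandwich on $M$ and the bound on the rank $a$, the conclusion does not follow.
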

\begin{rem}
Similarly to Proposition~\ref{BM1}, this proposition also follows from \cite[Proposition 10.3]{BM14a}. We give here a purely numerical proof, which is more aligned with the similar situation on the Beauville-Mukai system side. Our proof will motivate the definition of the rank of a wall (Definition~\ref{RankDef}), and give a bound for the rank (Lemma~\ref{RankGen}).
\end{rem}
\begin{proof}
For any $\Gamma$, the wall in $\Stab^\dagger(S)$ for $\vbf$ corresponding to $\Gamma$ intersects the $xy$-plane at
\begin{align*}
    \left(x+\frac{1}{\Gamma}\right)^2+y^2=\frac{1}{\Gamma^2}-n^2.
\end{align*}
We only need to consider $0<\Gamma<1/n$. Note that $(1,-1,d+1)$ corresponds to $\Gamma_+=\frac{2d}{dn^2+d+1}$. Suppose that there is a wall with $\Gamma<\gamma$ for some $0<\gamma\leq\Gamma_0=\frac{2dn}{2dn^2+1}$ (later we will let $\gamma=\Gamma_+$). Such a wall intersects $x=-n$ at $y^2=2n\left(\frac{1}{\Gamma}-n\right)$, inducing a decomposition of Mukai vectors
\begin{align*}
    (1,0,-dn^2)=(a,b,c)+(1-a,-b,-dn^2-c),
\end{align*}
where $a,b,c\in \Z$. Note that the two Mukai vectors on the right must be those of objects in $\Coh^{-n}(S)$, which implies $0\leq b+na\leq n$. Furthermore, we have
\begin{align*}
    &Z_{-n,y}(1,0,-dn^2)=dy^2+\sqrt{-1}(2dny),\\
    &Z_{-n,y}(a,b,c)=(-2dbn-c+ad(y^2-n^2))+\sqrt{-1}(2dy(b+an)).
\end{align*}
Since the above two complex numbers must have the same argument, $b+an>0$. A similar computation applied to $(1-a,-b,-dn^2-c)$ shows that $b+an<n$. Defining $i:=b+an$, we therefore have $1\leq i\leq n-1$.

Without loss of generality, we can assume $a\geq 1$ and $\mathrm{gcd}(a,b,c)=1$. The wall condition gives
\begin{align*}
    c&=-2bdn-adn^2-\frac{bdy^2}{n}\\
    &=adn^2-2idn+\frac{(an-i)dy^2}{n}.
\end{align*}
Let $M:=\frac{(an-i)dy^2}{n}\in\Z$. Since $(a,b,c)^2\geq -2$, we get $M\leq \frac{i^2d+1}{a}$.  Since
$$y^2=2n\left(\frac{1}{\Gamma}-n\right)>2n\left(\frac{1}{\gamma}-n\right),$$
we get $M>2d(an-i)\left(\frac{1}{\gamma}-n\right)$. Altogether we obtain
\begin{align}\label{ineqI}
    2d(an-i)\left(\frac{1}{\gamma}-n\right)<M\leq\frac{i^2d+1}{a},
\end{align}
and in particular
\begin{align}\label{ineqII}
    2d(an-i)\left(\frac{1}{\gamma}-n\right)<\frac{i^2d+1}{a}.
\end{align}
Solving this quadratic inequality in $a$, we obtain
\begin{align}\label{ineqcomp}
    a<\frac{1}{2n}\left(i+\sqrt{i^2+\dfrac{2n(i^2d+1)}{d\left(\frac{1}{\gamma}-n\right)}}\right).
\end{align}
Substituting in $i\leq n-1$ and $\gamma=\Gamma_+=\frac{2d}{dn^2+d+1}$, we obtain
\begin{align*}
    a&<\frac12\left(\frac{n-1}{n}+\sqrt{\frac{(n-1)^2}{n^2}+\dfrac{4((n-1)^2d+1)}{n(d(n-1)^2+1)}}\right) \\
    &<\frac{1}{2}\left(1+\sqrt{1+4}\right)<2.
\end{align*}
Thus $a=1$. Using the conditions $\abf^2\geq-2$ and $\abf^2\cdot\vbf^2-(\abf,\vbf)^2<0$, we see that the wall must be one of the rank one walls defined in this subsection. But $\Gamma_+$ corresponds to the largest rank one wall.
\end{proof}

\subsection{The $M(0,n,-1)$ side} Set $\vbf'=(0,n,-1)$. Let $\abf'=(1,c_1,s)$, where $1\leq c_1\leq n-1$ and $1\leq s\leq \min{(dc_1^2+1,d(n-c_1)^2)}$. Let $\bbf'=\vbf'-\abf'=(-1,n-c_1,-1-s)$. Then
\begin{align*}
    \abf'^2=2dc_1^2-2s\geq -2 \qquad\mbox{and}\qquad \bbf'^2=2d(n-c_1)^2-2(s+1)\geq -2.
\end{align*}
Moreover
\begin{itemize}
    \item if $\abf'^2=-2$, then $s=dc_1^2+1$ and $dc_1^2+1\leq d(n-c_1)^2$; we obtain $c_1< \frac{n}{2}$ and $0<(\abf',\vbf')=2dnc_1+1\leq dn^2$;
    \item if $\bbf'^2=-2$, then $s=d(n-c_1)^2$ and $d(n-c_1)^2\leq dc_1^2+1$; we obtain $c_1\geq \frac{n}{2}$ and $0<(\bbf',\vbf')=2dn(n-c_1)-1<dn^2$;
    \item otherwise, $\abf'^2\geq0$, $\bbf'^2\geq 0$, $(\abf',\vbf')>0$, and $(\bbf',\vbf')>0$; moreover
    \begin{align*}
        \abf'^2\vbf'^2-(\abf',\vbf')^2=-4dn(sn+c_1)-1<0.
    \end{align*}
\end{itemize}
By \cite[Theorem 5.7(b)]{BM14b}, $\abf'$ yields a flopping wall for $\vbf'$. One can easily compute that the corresponding $\Gamma_0<\Gamma=\frac{2d(ns+c_1)}{2dn(ns+c_1)+1}<\frac{1}{n}$. We will refer to these walls as rank one walls for $M(0,n,-1)$ or $\vbf'$. 

\begin{rem}
As we will see in the proof of the next proposition, we will cross the above walls along the path $x=0$, and $\vbf'=\abf'+\bbf'$ is the induced decomposition of $\vbf'$ along $x=0$. The bounds on $c_1$ and $s$ arise similarly as in Remark \ref{rem1}
\end{rem}

Our next result shows that the largest wall on the $M(0,n,-1)$ side is a rank one wall. 
\begin{prop}\label{LargestWallM}
The largest wall in the $xy$-plane for $\vbf'$ is a rank one wall given by
\begin{itemize}
    \item $\left(1,\frac{n}{2}, d\left(\frac{n}{2}\right)^2\right)$ if $n$ is even,
    \item $\left(1,\frac{n-1}{2},d\left(\frac{n-1}{2}\right)^2+1\right)$ if $n$ is odd.
\end{itemize}
\end{prop}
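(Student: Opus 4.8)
\emph{Step 1 (the largest rank one wall).} Following the two-part template of Proposition~\ref{LargestWallS}, I would first locate the largest wall among the rank one walls on the $M(0,n,-1)$ side. Such a wall with data $(c_1,s)$ corresponds to $\Gamma=\frac{2d(ns+c_1)}{2dn(ns+c_1)+1}$, which is strictly increasing in $m:=ns+c_1$, so it is enough to maximize $m$ over $1\le c_1\le n-1$ and $1\le s\le\min(dc_1^2+1,d(n-c_1)^2)$. For fixed $c_1$ one takes $s$ maximal, and an elementary comparison shows that $\min(dc_1^2+1,d(n-c_1)^2)$ equals $dc_1^2+1$ for $c_1<n/2$ and $d(n-c_1)^2$ for $c_1\ge n/2$; on each of the two resulting ranges of $c_1$ the function $m(c_1)$ is monotone, so its maximum occurs at $c_1=\lfloor(n-1)/2\rfloor$ or at $c_1=\lceil n/2\rceil$. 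Comparing the two candidate values --- the difference reduces to $(2k-1)^2$ when $n=2k$ and to $2k$ when $n=2k+1$, both positive --- should single out exactly the vector $\abf'$ of the statement. Writing $\Gamma_{\max}$ for its $\Gamma$, the associated wall meets the line $x=0$ at a point $(0,y_0)$ with $y_0^2=\frac{m}{dn}$, and since $m\ge n+1$ one has $y_0^2>1/d$.

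For the second step I would suppose, for contradiction, that some wall $W'$ for $\vbf'$ corresponds to $\Gamma'>\Gamma_{\max}$. Because the walls for $\vbf'$ cut out concentric semicircles in the $xy$-plane whose radii increase with $\Gamma$, $W'$ meets the line $x=0$ at a point $(0,y_1)$ with $y_1^2>y_0^2>1/d$, so $\sigma_{0,y_1}$ is a stability condition by Lemma~\ref{isStab}. A $\vbf'$-object $E$ that is strictly $\sigma_{0,y_1}$-semistable has a stable subobject $A$ of the same phase, yielding a decomposition $\vbf'=(a,b,c)+(-a,n-b,-1-c)$ into the Mukai vectors of $A$ and $E/A$, which both lie in $\Coh^0(S)$. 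Computing the central charges $Z_{0,y}$, equality of the phases of $A$, $E/A$ and $E$ becomes $dy^2an=b+cn$; membership in $\Coh^0(S)$ gives $0\le b\le n$, and the phase condition refines this to $1\le b\le n-1$. The case $a=0$ is then impossible, since it would force $b=-cn\notin\{1,\dots,n-1\}$. So, after possibly swapping the two summands, $a\ge1$, $y_1^2=\frac{b+cn}{dan}$, and the Bogomolov inequalities $(a,b,c)^2\ge-2$ and $(-a,n-b,-1-c)^2\ge-2$ read $ac\le db^2+1$ and $c\le d(n-b)^2$.

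Combining $y_1^2>y_0^2$ with $ac\le db^2+1$ and $1\le b\le n-1$ and clearing denominators would give, for $n$ even, an inequality of the form $ab+db^2n+n>\frac{da^2n^3}{4}+\frac{a^2n}{2}$ (with an analogous one for $n$ odd); bounding the left side via $b\le n-1$ yields a quadratic inequality in $a$ that fails for $a\ge3$, and substituting the exact value $b=n-1$ should also rule out $a=2$. This would leave $a=1$; then $b+cn>0$ and $b\le n-1$ give $c\ge0$, while $c=0$ would force $y_1^2=\frac{b}{dn}<1/d$, a contradiction, so $c\ge1$ and $1\le c\le\min(db^2+1,d(n-b)^2)$. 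Hence $(1,b,c)$ is one of the rank one walls on the $M(0,n,-1)$ side, so $W'$ is a rank one wall and $\Gamma'\le\Gamma_{\max}$ --- contradicting $\Gamma'>\Gamma_{\max}$. The step I expect to be most delicate is the exclusion of $a=2$: on the $\hilb{dn^2+1}$ side (Proposition~\ref{LargestWallS}) the crude estimate already forces $a<2$, whereas here it only gives $a\le2$, so one must substitute the exact bound $b=n-1$ (not merely $b<n$) and treat the parities of $n$ separately, though the resulting computations are routine.
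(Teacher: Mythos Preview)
Your approach is correct and follows the same two-step template as the paper's proof (identify the largest rank one wall, then exclude anything larger by analysing the decomposition at $x=0$). The tactical difference is in how you handle $|a|\ge 2$. After swapping summands you only use $1\le b\le n-1$, which leads to the quadratic inequality you sketch. The paper instead exploits the full symmetry of the decomposition $(a,b,c)\leftrightarrow(-a,n-b,-1-c)$ to assume without loss of generality that $1\le b\le n/2$; with this the bound
\[
y^2 \;=\; \frac{b^2}{a^2}+\frac{1}{a^2d}+\frac{b}{adn}\;\le\; \frac{n^2}{16}+\frac{1}{4d}+\frac{1}{4d}
\]
(for $a\ge 2$, $n$ even) is immediately seen to be less than $y_e^2=\tfrac{n^2}{4}+\tfrac{1}{2d}$, and similarly for $n$ odd. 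No quadratic needs to be solved and the two parities are handled in one line each.

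Your concern that the crude estimate only gives $a\le 2$ and that $a=2$ must be treated separately is unfounded: substituting $b\le n-1$ into your inequality $db^2n+n+ab>\tfrac{da^2n^3}{4}+\tfrac{a^2n}{2}$ and setting $a=2$ already yields $dn(2n-1)<n-2$, which fails for all $n\ge 2$. So your route works without the extra case split, though it is still a bit more laborious than the paper's symmetry trick. One minor slip: the second Bogomolov inequality reads $a(1+c)\le d(n-b)^2+1$, not $c\le d(n-b)^2$; the latter is only valid once you have reduced to $a=1$, which is the only place you use it.
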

\begin{proof}
For any $0<\Gamma<\frac{1}{n}$, the wall in $\Stab^\dagger(S)$ for $\vbf'$ corresponding to $\Gamma$ intersects the $xy$-plane at
\begin{align*}
    \left(x+\frac{1}{2dn}\right)^2+y^2=\left(\frac{1}{2dn}\right)^2+\frac{\Gamma}{2d^2(1-n\Gamma)n}.
\end{align*}
It is clear that larger $\Gamma$ give larger walls and all walls intersect $x=0$.
  A wall at $(0,y)$ for $\vbf'$ induces a decomposition of Mukai vectors
\begin{align*}
    (0,n,-1)=(a,b,c)+(-a,n-b,-1-c),
\end{align*}
 where $a,b,c\in \Z$ and the two Mukai vectors on the right are those of objects in $\Coh^{0}(S)$. As in the previous proof, we obtain $0<b<n$. The wall condition gives
 \begin{align*}
     c=ady^2-\frac{b}{n}.
 \end{align*}
 In particular, $a\neq 0$. 
 
If $|a|=1$, then both $(a,b,c)$ and $(-a,n-b,-1-c)$ are primitive. Using $(a,b,c)^2\geq -2$, $(-a,n-b,-1-c)^2\geq -2$, and the hyperbolicity of the lattice, we see that the wall is either one of the rank one walls given above or it is given by $(1,c_1,0)$, which is smaller than the middle wall (in fact it is a rank one wall on the $\hilb{dn^2+1}$ side). The wall whose lattice contains $(1,c_1,s)$ and $\vbf'$ intersects the vertical line $x=0$ at $y^2=\frac{ns+c_1}{nd}$. It is easy to see that the walls in the proposition are the ones that maximize $ns+c_1$ under the constraints $1\leq c_1\leq n-1$ and $s\leq \min{(dc_1^2+1,d(n-c_1)^2)}$, thus the largest rank one walls.

For $|a|>1$, we assume without loss of generality that $1\leq b\leq \frac{n}{2}$ and that $(a,b,c)$ is primitive. Suppose that $n$ is even; the wall corresponding to $\left(1,\frac{n}{2}, d\left(\frac{n}{2}\right)^2\right)$ intersects $x=0$ at $y_e^2=\frac{n^2}{4}+\frac{1}{2d}$. It suffices to show that $x=0$ does not intersect any such wall at $y>y_e$. Since $(a,b,c)^2\geq -2$, we obtain 
\begin{align*}
    y^2&\leq \frac{b^2}{a^2}+\frac{1}{a^2d}+\frac{b}{nad}\\
    &\leq \frac{n^2}{16}+\frac{1}{4d}+\frac{1}{4d}\\
    &<y^2_e.
\end{align*}
Suppose that $n$ is odd; the wall corresponding to $\left(1,\frac{n-1}{2}, d\left(\frac{n-1}{2}\right)^2+1\right)$ intersects $x=0$ at $y_o^2=\left(\frac{n-1}{2}\right)^2+\frac{3}{2d}-\frac{1}{2dn}$. Note that our assumption $b\leq \frac{n}{2}$ implies $b\leq \frac{n-1}{2}$.  It suffices to show that $x=0$ does not intersect any such wall at $y>y_o$. Since $(a,b,c)^2\geq -2$, we obtain
\begin{align*}
    y^2&\leq \frac{b^2}{a^2}+\frac{1}{a^2d}+\frac{b}{nad}\\
    &\leq \frac{(n-1)^2}{16}+\frac{1}{4d}+\frac{1}{4d}-\frac{1}{4nd}\\
    &<y_o^2.
\end{align*}
This completes the proof.
\end{proof}
Finally we can make explicit the meaning of the rank of a wall.
\begin{defn}\label{RankDef}
A flopping wall for $\vbf$ (or $\vbf'$) is a \emph{rank $r$ wall} if its corresponding $\Gamma$ is less than (respectively, greater than) $\Gamma_0$ and the smallest positive rank of the Mukai vectors used in the decomposition of $\vbf$ (or $\vbf'$) at its intersection with $x=-n $ (respectively, $x=0$) is $r$. We regard the Brill-Noether wall ($\Gamma=\Gamma_0$) as a rank one wall for both $\vbf$ and $\vbf'$.
\end{defn}
It is easy to verify that all rank one walls for $\vbf$ and $\vbf'$ are those listed in this section. Recycling (\ref{ineqcomp}) and noting that $i\leq n-1$ there, we obtain the following rough upper bound for the rank of a wall for $\vbf$.
\begin{lem}\label{RankGen}
Let $0<\gamma\leqslant\Gamma_0$. A wall for $\vbf$ with $\Gamma<(\leqslant) \gamma$ has rank 
\begin{align*}
    r<(\leq)\frac{1}{2n}\left(n-1+\sqrt{(n-1)^2+\dfrac{2n((n-1)^2d+1)}{d\left(\frac{1}{\gamma}-n\right)}}\right).
\end{align*}
\end{lem}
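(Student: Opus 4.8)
The bound is essentially already contained in the proof of Proposition~\ref{LargestWallS}: inequality (\ref{ineqcomp}) there bounds the rank $a$ of \emph{any} Mukai vector $(a,b,c)$, with $a\geqslant 1$, occurring in the decomposition of $\vbf$ along the line $x=-n$, in terms of its twisted rank $i=b+na$ and of an arbitrary $\gamma\in(0,\Gamma_0]$. So the plan is to rerun that argument verbatim, but \emph{without} the two simplifications made there (substituting $i\leqslant n-1$ and $\gamma=\Gamma_+$), and instead to apply (\ref{ineqcomp}) to the particular factor whose rank realises the rank of the wall, and finally to optimise over $i$.

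In detail: given a wall $W$ for $\vbf$ with associated $\Gamma\leqslant\gamma$, I would pick a stability condition $\sigma_{-n,y}$ on $W$, so that $y^{2}=2n(\tfrac1\Gamma-n)\geqslant 2n(\tfrac1\gamma-n)$. On $W$ the vector $\vbf$ is a sum of Mukai vectors $(a_{j},b_{j},c_{j})$ of $\sigma$-stable objects of $\Coh^{-n}(S)$, all having the same phase as $\vbf$. As in the proof of Proposition~\ref{LargestWallS}, membership in the heart gives $\mathrm{Im}\,Z_{-n,y}(a_{j},b_{j},c_{j})=2dy(b_{j}+na_{j})\geqslant 0$, while $\mathrm{Im}\,Z_{-n,y}(\vbf)=2dny>0$ and the presence of at least two summands (since $\vbf$ is strictly $\sigma$-semistable on a flopping wall) force $1\leqslant b_{j}+na_{j}\leqslant n-1$ for every $j$. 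Since the $a_{j}$ sum to $1$, at least one is positive; let $(a,b,c)$ be the factor of least positive rank, so that $r:=a\geqslant 1$ is the rank of $W$ in the sense of Definition~\ref{RankDef}, and put $i:=b+nr\in\{1,\dots,n-1\}$. Crucially $r\geqslant 1$ together with $i\leqslant n-1$ forces $rn-i\geqslant 1>0$, which is exactly the sign needed below.

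Now I would copy the chain in the proof of Proposition~\ref{LargestWallS} with $a$ replaced by $r$: the equal-phase condition on $W$ gives $c=rdn^{2}-2idn+M$ with $M:=\tfrac{(rn-i)dy^{2}}{n}\in\Z$; the Bogomolov inequality $(r,b,c)^{2}=2(di^{2}-rM)\geqslant -2$, available because the factor is $\sigma$-stable, gives $M\leqslant\tfrac{i^{2}d+1}{r}$; and $y^{2}\geqslant 2n(\tfrac1\gamma-n)$ together with $rn-i>0$ gives $M\geqslant 2d(rn-i)(\tfrac1\gamma-n)$. Chaining these is precisely the quadratic inequality (\ref{ineqII}) in $r$, and solving it reproduces (\ref{ineqcomp}) with $a$ replaced by $r$. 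Finally, the right-hand side of (\ref{ineqcomp}) is increasing in $i$ on $(0,\infty)$, so substituting $i\leqslant n-1$ yields the asserted bound on $r$; in the case $\Gamma<\gamma$ one has $y^{2}>2n(\tfrac1\gamma-n)$ and every estimate above becomes strict, giving the strict version.

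I do not expect a genuine obstacle, since Proposition~\ref{LargestWallS} already does the hard arithmetic. The one point that needs a moment's attention is the selection of the factor in the second step: one must confirm that the factor of least positive rank still has twisted rank $b+nr$ in $\{1,\dots,n-1\}$ — so that $i$ lies in the admissible range, $rn-i>0$ fixes the sign in the estimate on $M$, and the Bogomolov bound $(r,b,c)^{2}\geqslant -2$ applies — all of which hold because each summand on the wall is the Mukai vector of a $\sigma$-stable object lying in the heart $\Coh^{-n}(S)$. After that, the proof is the recycled computation followed by the trivial monotonicity of (\ref{ineqcomp}) in $i$.
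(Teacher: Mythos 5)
Your proposal is correct and is essentially the paper's own argument: the paper proves the lemma precisely by recycling inequality (\ref{ineqcomp}) from the proof of Proposition~\ref{LargestWallS}, keeping $\gamma$ general and substituting $i\leq n-1$ via the monotonicity of the bound in $i$. Your extra care in applying the chain of estimates to the least-positive-rank stable factor of the decomposition (so that Bogomolov, $1\leq i\leq n-1$, and $rn-i>0$ all hold) is exactly the implicit content of Definition~\ref{RankDef} and introduces no new ideas beyond the paper's proof.
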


\begin{rem}
Inequality (\ref{ineqI}), along with the fact that $M$ is an integer, often imposes a stronger restriction on the rank than (\ref{ineqcomp}). We will use this observation frequently.
\end{rem}
\begin{rem}
One can obtain a rough bound for the rank of a wall for $\vbf'$ in a similar manner. See Lemma~\ref{RankBoundM}.
\end{rem}

\begin{rem}
The moduli spaces $M(0,2,-1)$ and $M(0,3,-1)$ for $d=1$ are studied in~\cite{Hel20,QS22a}, respectively; all walls turn out to be rank one walls. One may wonder whether all walls for $M(0,n,-1)$ when $d=1$ are rank one walls. The answer is negative: when $n=4$ there is a rank two wall for $M(0,4,-1)$ when $d=1$.
\end{rem}

\section{Totally semistable walls}
In this section a wall does not have to be a flopping wall, i.e., it can be a fake wall, a divisorial wall, or a flopping wall. A wall for $\wbf$ is a \emph{totally semistable wall} if $M^{st}_{\sigma}(\wbf)=\emptyset$ for $\sigma$ on the wall. The behavior of totally semistable walls is hard to control. We will eliminate such walls on the paths where we will carry out wall-crossing. The benefit of having non-totally semistable wall is that the moduli spaces on both sides of the wall will share an open set of objects which are stable across the wall.

By \cite[Theorem 5.7]{BM14b}, a wall $W$ with associated lattice $\Hcal$ is totally semistable if and only if there exists either an isotropic $\ibf\in\Hcal$ with $(\ibf,\wbf)=1$ or an effective spherical $\sbf\in\Hcal$ with $(\sbf,\wbf)<0$. (See~\cite[Proposition~5.5]{BM14b} and the comment immediately after for the definition of effective. We will only need that it implies that the imaginary parts of the central charges of $\sbf$ and $\wbf$ have the same sign.) 
\begin{lem}\label{SnoTSS}
Let $\sigma_{-n,y}$ be a stability condition. Then $\sigma_{-n,y}$ is not on a totally semistable wall for $\vbf$.
\end{lem}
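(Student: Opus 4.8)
\emph{Proof proposal.} The plan is to argue by contradiction using the numerical criterion from \cite[Theorem 5.7]{BM14b} recalled above. Suppose $\sigma_{-n,y}$ lies on a totally semistable wall $W$ for $\mathbf v=(1,0,-dn^2)$, and let $\mathcal H_W\ni\mathbf v$ be its associated primitive hyperbolic rank-two sublattice. Then $\mathcal H_W$ contains either an effective spherical class $\mathbf s$ with $(\mathbf s,\mathbf v)<0$, or an isotropic class $\mathbf i$ with $(\mathbf i,\mathbf v)=1$; in either case I would write the relevant class as $\mathbf a=(a,b,c)\in\Z^3$. As recorded in the proof of Proposition~\ref{LargestWallS}, along $x=-n$ one has $Z_{-n,y}(\mathbf v)=dy^2+\sqrt{-1}(2dny)$ and $Z_{-n,y}(\mathbf a)=\big(-2dbn-c+ad(y^2-n^2)\big)+\sqrt{-1}(2dy(b+an))$, while $(\mathbf a,\mathbf v)=adn^2-c$ and $\mathbf a^2=2db^2-2ac$. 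Since $\sigma_{-n,y}\in W$, the central charge $Z_{-n,y}$ sends $\mathcal H_W$ into a line through the origin of $\C$, so $Z_{-n,y}(\mathbf a)=\lambda\,Z_{-n,y}(\mathbf v)$ for some $\lambda\in\R$ (note $Z_{-n,y}(\mathbf v)\ne 0$). Comparing imaginary parts gives $\lambda=\tfrac{b+an}{n}$, and then comparing real parts yields the key identity $\tfrac{b\,d\,y^2}{n}=-(2dbn+c+adn^2)$; since $dy^2/n>0$ this pins down the sign of $b$ in terms of $a$ and $c$. The remainder feeds the constraints on $\mathbf a^2$ and $(\mathbf a,\mathbf v)$ into this identity, in the spirit of the proofs of Propositions~\ref{BM1} and~\ref{LargestWallS}.

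For the spherical case, $\mathbf a^2=-2$ reads $ac=db^2+1>0$ (so $a,c$ are nonzero of the same sign), $(\mathbf s,\mathbf v)<0$ reads $c>adn^2$, and effectivity of $\mathbf s$ forces the imaginary part of $Z_{-n,y}(\mathbf s)$ to have the same sign as that of $Z_{-n,y}(\mathbf v)$, i.e.\ $b+an>0$. If $b=0$ then $ac=1$, and neither $(a,c)=(1,1)$ (forcing $1>dn^2$) nor $(a,c)=(-1,-1)$ (forcing $b+an=-n>0$) is possible. If $a>0$, then $c>adn^2$ together with $ac=db^2+1$ forces $b^2\ge a^2n^2$ (using that $b^2$ is an integer), so $|b|\ge an$; but $b\ge an$ makes the right side of the key identity negative while its left side is positive, and $b\le-an$ contradicts $b+an>0$. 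If $a<0$, then $b+an>0$ gives $b>|a|n$ while $c>adn^2$ and $ac=db^2+1$ give $b^2<a^2n^2$, again a contradiction.

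For the isotropic case, $\mathbf a^2=0$ reads $ac=db^2$ and $(\mathbf i,\mathbf v)=1$ reads $c=adn^2-1$; eliminating $c$ gives the clean relation $d(a^2n^2-b^2)=a$, and the key identity becomes $\tfrac{b\,d\,y^2}{n}=1-2dn(b+an)$, so the sign of $b$ is again forced. The case $b=0$ gives $2dan^2=1$, impossible. If $b>0$ then $b+an\le 0$, so $a<0$ and $b\le|a|n$, whereas $d(a^2n^2-b^2)=a<0$ forces $b>|a|n$ --- a contradiction. The remaining and most delicate case is $b<0$: here $b+an\ge 1$, hence $a>0$, so $a^2n^2-b^2=a/d$ is a positive integer, $d\mid a$, and the factorization $(an-|b|)(an+|b|)=a/d$ shows that $an+|b|$ divides the positive integer $a/d$; hence $an+|b|\le a/d\le a\le an$, which forces $|b|=0$, contradicting $b<0$. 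This exhausts all cases.

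I expect the two points needing genuine care --- rather than a long computation --- to be, first, the sign bookkeeping: one must use that the effective spherical class $\mathbf s$ has central charge a \emph{positive} real multiple of $Z_{-n,y}(\mathbf v)$ on the wall (combining ``$\mathbf s\in\mathcal H_W$, $\sigma_{-n,y}\in W$'' with ``$\mathbf s$ effective''), which is precisely what converts the qualitative hypotheses into the usable inequality $b+an>0$ and the key identity; and second, the small divisibility observation $an+|b|\mid a/d$ that closes the last subcase of the isotropic case. Everything else is a routine manipulation of elementary inequalities.
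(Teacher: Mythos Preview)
Your argument is correct and follows the same overall strategy as the paper: invoke the Bayer--Macr\`i criterion and rule out both the isotropic and the effective spherical possibilities by playing the Mukai-pairing constraints off against the wall condition at $x=-n$. The spherical case is essentially identical to the paper's (you use the ``key identity'' where the paper uses the equivalent condition $\Gamma_W>0$ to pin down the sign of $s_1$), and your case split on $\mathrm{sgn}(a)$ mirrors the paper's split on $\mathrm{sgn}(s_0)$. The one genuine difference is in the isotropic case: the paper first classifies all isotropic $\mathbf i$ with $(\mathbf i,\mathbf v)=1$ purely diophantinely---from $-i_0=d(i_1+ni_0)(i_1-ni_0)$ one gets $\mathbf i=(0,0,-1)$ as the unique solution---and then observes that this class gives $\Gamma_W=0$, so the wall never meets the $xy$-plane; you instead feed the wall condition in from the start via the key identity and run a sign analysis on $b$, closing the last subcase with the neat divisibility observation $an+|b|\mid a/d$. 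Both routes are short; yours is a bit more explicit, the paper's a bit slicker.
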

\begin{proof}
Assume $\sigma_{-n,y}$ is on a totally semistable wall $W$ for $\vbf$ with associated lattice $\Hcal$. For the wall to contain $(-n,y)$, $0<\Gamma_W<\frac{1}{n}$. Suppose there is an isotropic $\ibf=(i_0,i_1,i_2)\in\Hcal$ with $(\ibf,\vbf)=1$. Then $di_1^2-i_0i_2=0$ and $dn^2i_0-i_2=1$. Therefore
\begin{align*}
    -i_0=i_0i_2-dn^2i_0^2=di_1^2-dn^2i_0^2=d(i_1+ni_0)(i_1-ni_0).
\end{align*}
Noting that $n\geq 2$ and $d\geq1$, it is easy to see that one can only have $\ibf=(0,0,-1)$. Then $\Gamma_W=0$ and $W$ will not show up in the $xy$-plane.

Suppose that there is an effective spherical $\sbf=(s_0,s_1,s_2)\in\Hcal$ with $(\sbf,\vbf)<0$. Note that the imaginary part of $Z_{-n,y}(\vbf)$ is  $2dny>0$. Then
\begin{align*}
    ds_1^2-s_0s_2=-1&\qquad \mbox{as }\sbf \mbox{ is spherical}, \\
    dn^2s_0-s_2<0&\qquad \mbox{as }(\sbf,\vbf)<0, \\
    s_1+ns_0>0&\qquad \mbox{as }\sbf \mbox{ is effective}.
\end{align*}
By the first equation, $s_0\neq 0$. If $s_0>0$, then $s_2\geq dn^2s_0+1$ and $ns_0\geq-s_1$. Moreover, since $\Gamma_W=\frac{-2ds_1}{dn^2s_0+s_2}>0$, $s_1<0$. Thus
\begin{align*}
    dn^2s_0^2> ds_1^2=s_0s_2-1>dn^2s_0^2+s_0-1,
\end{align*}
which is absurd. If $s_0<0$, then $0<-s_2\leq dn^2(-s_0)-1$ and $s_1> n(-s_0)$. Thus
\begin{align*}
   dn^2s_0^2\leq ds_1^2=s_0s_2-1<dn^2s_0^2+s_0-1,
\end{align*}
which is also absurd.
\end{proof}

We use a slightly more complicated path for the wall-crossing for $\vbf'$.

\begin{lem}\label{MnoTSS}
Fix $y_0>0$. Let $0\leq\epsilon\ll1$ and $y>y_0$. If $\sigma_{-\epsilon,y}$ is a stability condition, then $\sigma_{-\epsilon,y}$ is not on a totally semistable wall for $\vbf'$.
\end{lem}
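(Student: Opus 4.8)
The plan is to mimic the proof of Lemma~\ref{SnoTSS}, using the criterion from \cite[Theorem 5.7]{BM14b}: a totally semistable wall $W$ for $\vbf'$ with associated lattice $\Hcal$ must carry either an isotropic $\ibf\in\Hcal$ with $(\ibf,\vbf')=1$, or an effective spherical $\sbf\in\Hcal$ with $(\sbf,\vbf')<0$. For the wall to appear in the $xy$-plane near $x=-\epsilon$ with $y>y_0$, the corresponding $\Gamma_W$ must satisfy $0<\Gamma_W<\tfrac1n$, and by the semicircle formula for walls of $\vbf'$ (from Section~\ref{pre-hilb}) such a wall intersects the line $x=0$; since $\epsilon$ is small and irrational-free of the obstruction, a stability condition $\sigma_{-\epsilon,y}$ on $W$ with $y>y_0$ forces $W$ to intersect $x=0$ at some $y'>0$ close to $y$. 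So I would first reduce to analyzing the decomposition of $\vbf'=(0,n,-1)$ along $x=0$, exactly as in the proof of Proposition~\ref{LargestWallM}: writing the component Mukai vectors as $(a,b,c)$, the wall condition on $x=0$ gives $c=ady^2-\tfrac bn$ and the $\Coh^0(S)$ constraint gives $0<b<n$ and $a\neq0$.

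Next I would run the two cases of the totally-semistable criterion. For the isotropic case, write $\ibf=(i_0,i_1,i_2)\in\Hcal$ with $di_1^2=i_0i_2$ and $(\ibf,\vbf')=ni_1+i_2=1$ (up to sign conventions; I would pin these down from the Mukai pairing $((r,c_1,s),(r',c_1',s'))=c_1c_1'-rs'-r's$, so $(\ibf,\vbf')=ni_1-i_0\cdot(-1)-0\cdot i_2 = ni_1+i_0$, giving $ni_1+i_0=1$). Substituting $i_2=di_1^2/i_0$ into the lattice/hyperbolicity constraints and using $i_0=1-ni_1$ should force $\ibf$ to be (a multiple of) a vector giving $\Gamma_W\le0$ or $\Gamma_W\ge\tfrac1n$, i.e.\ a wall not meeting the relevant part of the $xy$-plane, contradiction. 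For the effective spherical case, $\sbf=(s_0,s_1,s_2)$ with $ds_1^2-s_0s_2=-1$, effectivity meaning $\mathrm{Im}\,Z_{-\epsilon,y}(\sbf)$ has the same sign as $\mathrm{Im}\,Z_{-\epsilon,y}(\vbf')>0$ (which for small $\epsilon$ reads, roughly, $ns_0$-type expression $>0$, and one must be a little careful with the $\epsilon$-term), and $(\sbf,\vbf')<0$. I would then derive a numerical contradiction from $ds_1^2-s_0s_2=-1$ together with these sign conditions and $0<\Gamma_W<\tfrac1n$, splitting on the sign of $s_0$, just as in Lemma~\ref{SnoTSS}.

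The genuinely new wrinkle here, and the step I expect to be the main obstacle, is the $\epsilon$-perturbation: unlike $x=-n$ which is a fixed rational value adapted to $\vbf'$, the line $x=-\epsilon$ is a slight tilt, so I cannot literally evaluate central charges on $x=0$. The clean way to handle this is to note that a totally semistable wall is determined by its lattice $\Hcal$, which does not depend on the stability condition, so it suffices to show that \emph{no} lattice $\Hcal$ admitting an obstructing class can give a wall $W$ whose semicircle in the $xy$-plane contains a point $(-\epsilon,y)$ with $y>y_0$ for arbitrarily small $\epsilon$; equivalently, letting $\epsilon\to0$, the semicircle passes through $(0,y')$ with $y'\ge y_0>0$, and then the $x=0$ analysis above applies verbatim. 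I would make this limiting argument precise using local finiteness of walls and the explicit semicircle equation, so that ``$0\le\epsilon\ll1$'' can be read as: for $\epsilon$ smaller than the (finitely many, positive) $x$-coordinates of the leftmost points of the relevant semicircles. The effectivity sign condition for $\sbf$ at $\sigma_{-\epsilon,y}$ then also converges to the clean $x=0$ condition, removing the only place where $\epsilon$ genuinely intervenes.

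With that reduction in place the computation is parallel to (and slightly easier than) Lemma~\ref{SnoTSS}, since $\vbf'$ has rank $0$ which kills several terms; I would organize it as: (i) state the criterion and reduce to the $x=0$ picture via the $\epsilon\to0$ argument; (ii) dispatch the isotropic case; (iii) dispatch the effective spherical case by the sign-of-$s_0$ dichotomy. No step should require solving a Pell equation, consistent with the paper's stated philosophy.
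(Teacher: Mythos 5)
Your overall architecture is the paper's: invoke \cite[Theorem 5.7]{BM14b}, and split into the isotropic case and the effective spherical case. But the step you yourself flag as the main obstacle -- the $\epsilon$-perturbation -- is exactly where your proposed fix does not work, and it is where the real content of the lemma lies. The lemma is a statement for a \emph{fixed} small $\epsilon\geq 0$, uniform in $y>y_0$, and the effectivity condition must be evaluated at the actual point $\sigma_{-\epsilon,y}$: it reads $s_1+\epsilon s_0>0$ (not an ``$ns_0$-type'' condition, and also note $(\ibf,\vbf')=2dni_1+i_0$, not $ni_1+i_0$). For fixed $\epsilon>0$ this is genuinely weaker than the clean $x=0$ condition $s_1>0$: classes with $s_1\leq 0$ and $s_0$ positive of size roughly $1/\epsilon$ satisfy it, and ruling them out is precisely where one must combine it with $(\sbf,\vbf')<0$, i.e.\ $2dns_1+s_0<0$, to get $-s_1<\epsilon s_0\leq 2dn\epsilon(-s_1)$ and hence a contradiction for $\epsilon<\frac{1}{2dn}$. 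Your claim that the effectivity condition ``converges to the clean $x=0$ condition, removing the only place where $\epsilon$ genuinely intervenes'' skips this, and no $\epsilon\to 0$ limit can supply it, since strict inequalities degenerate in the limit and $\epsilon$ is fixed in the statement. Moreover, the point $(0,y')$ where the semicircle meets $x=0$ need not be a stability condition at all: by Remark~\ref{RestrictionStab}, $\sigma_{0,y'}$ fails to be a stability condition for $y'\leq\frac{1}{\sqrt d}$, and this is exactly the regime the lemma is designed for (in Section 8 the path $\sigma_{-\epsilon,t}$ starts just below $\frac{1}{\sqrt d}$ so as to cross the Brill-Noether wall). So ``the $x=0$ analysis applies verbatim'' is not available there: the categorical inputs (decomposition into objects of $\mathrm{Coh}^0(S)$, effectivity) make no sense at a non-stability point, and walls can even be disconnected across such loci with different effective cones (cf.\ Remark~\ref{DisconnectWall}). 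Relatedly, your plan to derive the spherical-case contradiction from the sign conditions together with $0<\Gamma_W<\frac1n$ alone is not enough; the paper's contradiction uses the position of the point, via the wall equation $s_2=s_0\left(d\epsilon^2-\frac{\epsilon}{n}+dy^2\right)-\frac{s_1}{n}$ together with $y>y_0$ and $\epsilon$ small relative to $y_0$ to force $s_2<-\frac{s_1}{n}$ and then $ds_1^2>2ds_1^2-1$.

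For the record, the paper's proof stays on the line $x=-\epsilon$ throughout: the isotropic case is disposed of by applying $\Phi_n^{-1}$ and reusing the computation already done in Lemma~\ref{SnoTSS} (rather than redoing the arithmetic for $\vbf'$, which in your sketch is left at ``should force'' and would also need the observation that the resulting class $(1,0,0)$ only produces the small semicircle through the origin, which misses $\{x=-\epsilon,\ y>y_0\}$ for $\epsilon$ small relative to $y_0$); the spherical case is the explicit $\epsilon$-dependent computation sketched above. If you keep your $x=0$ reduction, you must either abandon it for a direct argument at $\sigma_{-\epsilon,y}$ as the paper does, or supply a genuinely new mechanism to transport effectivity from $(-\epsilon,y)$ to $(0,y')$ -- which, given the non-stability of $\sigma_{0,y'}$ in the relevant range, I do not see how to do.
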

\begin{rem}
To be precise, we require $\epsilon\geq 0$ to be sufficiently small that $\epsilon\leq\frac{1}{2dn}$ and $d\epsilon^2-\frac{\epsilon}{n}+dy_0^2>0$.
\end{rem}
\begin{proof}
Suppose $\sigma_{-\epsilon,y}$ is on a totally semistable wall $W'$ for $\vbf'$ with associated lattice $\Hcal'$. By applying $\Phi^{-1}_n$ and use the first part of the previous proof, we see that there is no isotropic $\ibf\in\Hcal'$ with $(\ibf,\vbf')=1$. Then there must be an effective spherical $\sbf=(s_0,s_1,s_2)\in\Hcal'$ with $(\sbf,\vbf')<0$. Note that the imaginary part of $Z_{-\epsilon,y}(\vbf')$ is $2dny>0$. Then
\begin{align*}
    ds_1^2-s_0s_2=-1&\qquad \mbox{as }\sbf \mbox{ is spherical}, \\
    2dns_1+s_0<0&\qquad \mbox{as }(\sbf,\vbf')<0,\\
    s_1+\epsilon s_0>0&\qquad \mbox{as }\sbf \mbox{ is effective}.
\end{align*}
Combining $s_1+\epsilon s_0> 0$, $2dns_1+s_0<0$, and $\epsilon\geq 0$ gives
$$-s_1< \epsilon s_0\leq\epsilon (-2dns_1)=2dn\epsilon (-s_1).$$
If $s_1<0$ we get $1< 2dn\epsilon$, which is impossible if $\epsilon$ is small enough. It is clear that $s_1$ cannot be $0$. Thus $s_1>0$ and $s_0<-2dns_1<0$. By the wall condition we have
\begin{align*}
    s_2=s_0\left(d\epsilon^2-\frac{\epsilon}{n}+dy^2\right)-\frac{s_1}{n}.
\end{align*}
When $\epsilon$ is small relative to $y_0$, $s_2<-\frac{s_1}{n}$. Thus
$$ds_1^2=(-s_0)(-s_2)-1>(2dns_1)\left(\frac{s_1}{n}\right)-1= 2ds_1^2-1,$$
which is absurd.
\end{proof}
\begin{rem}
A bonus of the previous two lemmas is that we also eliminate fake walls along our paths of wall-crossings since fake walls must be totally semistable by \cite[Theorem 5.7]{BM14b}.
\end{rem}

\section{Rank two Beauville-Mukai systems $M(0,2,-1)$}
As in the preliminaries section, the autoequivalence $\Phi_2$ of $\Db(S)$ induces a rational map $\Phi_2: \hilb{4d+1}\dashrightarrow M(0,2,-1)$. In this section we try to understand the birational geometry of $M(0,2,-1)$ by studying both the $\hilb{4d+1}$ side and the $M(0,2,-1)$ side. We will use the rank one walls studied above as anchors for our analysis. Fix $\vbf=(1,0,-4d)$ and $\vbf'=(0,2,-1)$. The middle wall corresponds to  $\Gamma_0=\frac{4d}{8d+1}$. 
\subsection{The $\hilb{4d+1}$ side}
By the previous section, we have $d+1$ rank one walls for $\vbf$ given by the Mukai vectors $(1,-1,k)$ for $1\leq k\leq d+1$. By Proposition~\ref{LargestWallS}, the wall corresponding to $(1,-1,d+1)$ is the largest wall for $\vbf$ in the $xy$-plane. Let $0<\gamma\leqslant\Gamma_0$. A wall for $\vbf$ with $\Gamma<(\leqslant) \gamma$ has rank
\begin{align}\label{RankBoundSn2}
    a<(\leq)\frac14\Bigg(1+\sqrt{1+\dfrac{4(d+1)}{d\left(\frac{1}{\gamma}-2\right)}}\Bigg)
\end{align} 
by Lemma~\ref{RankGen}. Moreover, the inequality (\ref{ineqI}) becomes, 
\begin{align}\label{ineq1}
    2d(2a-1)\left(\frac{1}{\gamma}-2\right)<M\leq\frac{d+1}{a}
\end{align}
for some integer $M$, since $n=2$ and $i=1$. The inequality (\ref{ineqII}) becomes
\begin{align}\label{ineq2}
    2d(2a-1)\left(\frac{1}{\gamma}-2\right)<\frac{d+1}{a}.
\end{align}

Note that the wall $W_-$ given by $(1,-1,1)$ is the smallest among rank one walls, with corresponding $\Gamma=\frac{2d}{4d+1}$ largest among rank one walls. An immediate consequence of the above inequalities is that when the degree of the K3 surface is small, there are no other walls in between these rank one walls.
\begin{cor}\label{RankOne}
Suppose $d\leq 6$. Any wall for $\vbf$ with $\Gamma\leq\frac{2d}{4d+1}$ is a rank one wall.
\end{cor}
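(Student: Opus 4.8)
The plan is to use the rank bound inequality (\ref{RankBoundSn2}) together with the sharper integrality constraint (\ref{ineq1}) to rule out walls of rank $\geq 2$ with $\Gamma \leq \frac{2d}{4d+1}$, for each $d$ from $1$ to $6$. Since the smallest rank one wall $W_-$ given by $(1,-1,1)$ corresponds to $\gamma = \frac{2d}{4d+1}$, we set $\gamma = \frac{2d}{4d+1}$, so $\frac{1}{\gamma} - 2 = \frac{4d+1}{2d} - 2 = \frac{1}{2d}$. First I would substitute this into (\ref{RankBoundSn2}) to get
\begin{align*}
    a \leq \frac14\left(1 + \sqrt{1 + \frac{4(d+1)}{d \cdot \frac{1}{2d}}}\right) = \frac14\left(1 + \sqrt{9 + 8d}\right).
\end{align*}
This already shows $a$ is bounded: for $d \leq 6$ we get $\sqrt{9+8d} \leq \sqrt{57} < 8$, so $a \leq \frac14(1+8) < 3$, hence $a \in \{1,2\}$. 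So the only case to eliminate is $a = 2$.

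Next I would invoke the sharper inequality (\ref{ineq1}) with $a = 2$ and $\frac{1}{\gamma} - 2 = \frac{1}{2d}$: it reads
\begin{align*}
    2d \cdot 3 \cdot \frac{1}{2d} < M \leq \frac{d+1}{2}, \qquad\text{i.e.}\qquad 3 < M \leq \frac{d+1}{2}
\end{align*}
for some integer $M$. For this to have a solution we need $\frac{d+1}{2} > 3$, i.e. $d > 5$, so for $d \leq 5$ there is immediately no rank two wall. For $d = 6$ the inequality gives $3 < M \leq \frac{7}{2}$, which has no integer solution either (the only candidate would be a non-integer). Hence for all $d \leq 6$ there is no wall of rank $2$ with $\Gamma \leq \frac{2d}{4d+1}$, and combined with the rank bound above, no wall of rank $\geq 2$ at all. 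Therefore every such wall is a rank one wall.

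One subtlety to address: the inequalities (\ref{ineq1}) and (\ref{RankBoundSn2}) were derived under the strict bound $\Gamma < \gamma$; the corollary allows $\Gamma \leq \frac{2d}{4d+1}$, so I must separately account for the wall with $\Gamma$ exactly equal to $\frac{2d}{4d+1}$. But the parenthetical $(\leq)$ versions of the inequalities handle precisely this case — with $\Gamma \leq \gamma = \frac{2d}{4d+1}$ we use the non-strict forms, which give $a \leq \frac14(1+\sqrt{9+8d})$ and $3 \leq M \leq \frac{d+1}{2}$ (allowing equality). For $d \leq 5$ this is still empty (need $d \geq 5$ for $M=3$; at $d=5$ we'd get $3 \leq M \leq 3$, so $M = 3$ is allowed!). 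So the genuinely delicate point is $d = 5$ with $a = 2$, $M = 3$: I would need to check by hand whether this data actually produces a valid flopping wall, i.e. whether the resulting Mukai vector $(2, b, c)$ with $b + 2a = i = 1$ (so $b = -3$) and $c$ determined by $M = \frac{(2\cdot 2 - 1)d y^2}{2} = 3$ giving $c = ad n^2 - 2idn + M = 4\cdot 5 - 2\cdot 5 + 3 = 13$, hence $\abf = (2,-3,13)$, satisfies $\abf^2 \geq -2$ and the hyperbolicity/wall conditions, and if so, whether its $\Gamma$ genuinely lies in the claimed range rather than coinciding with or exceeding $\frac{2d}{4d+1}$. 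I expect this boundary bookkeeping — confirming that the $d=5$, $M=3$ case either fails a numerical condition or produces a wall with $\Gamma$ strictly larger than $\frac{2d}{4d+1}$ (hence not in the stated range) — to be the main obstacle, and the cleanest resolution is probably to observe that $\abf = (2,-3,13)$ has $\abf^2 = 2(d\cdot 9 - 13) = 2(45-13) = 64 > 0$ with $\abf^2 \vbf^2 - (\abf,\vbf)^2 \geq 0$, so it does not define a (totally semistable-free flopping) wall in the relevant sense, or to note directly that its $\Gamma = \frac{2d|b|}{dn^2 \cdot a + \text{(appropriate term)}}$ falls outside $(0, \frac{2d}{4d+1}]$.
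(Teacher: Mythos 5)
Your treatment of the strict case $\Gamma<\frac{2d}{4d+1}$ is correct and is essentially the paper's own argument: with $\gamma=\frac{2d}{4d+1}$ one has $\frac{1}{\gamma}-2=\frac{1}{2d}$, the rank bound reduces everything to $a=2$ (the paper gets the same reduction from (\ref{ineq2}), which gives $(2a-1)a<d+1$), and the integrality of $M$ in (\ref{ineq1}) kills $a=2$, since $3<M\leq\frac{d+1}{2}$ has no integer solution for $d\leq 6$.

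The gap is your handling of the boundary wall $\Gamma=\frac{2d}{4d+1}$, which you leave unresolved and which your proposed ``cleanest resolution'' would not settle. First, the arithmetic is off: with $a=2$, $i=1$, $n=2$, $d=5$, $M=3$, the formula $c=adn^2-2idn+M$ gives $c=40-20+3=23$, not $13$, and the resulting class $\abf=(2,-3,23)$ has $\abf^2=2(di^2-aM)=-2$, i.e.\ it is spherical; indeed $\abf=3\cdot(1,-1,1)-\vbf$ lies in the very lattice of $W_-$, so neither ``$\abf^2>0$'' nor ``$\abf^2\vbf^2-(\abf,\vbf)^2\geq 0$'' is available to dismiss it, nor does its $\Gamma$ exceed $\frac{2d}{4d+1}$ (it equals it). But none of this bookkeeping is needed: by Definition~\ref{RankDef} the rank of a wall is the \emph{smallest} positive rank of the Mukai vectors occurring in the decomposition of $\vbf$ at $x=-2$, and the wall with $\Gamma=\frac{2d}{4d+1}$ is exactly the wall $W_-$ constructed in Section~\ref{Rank1Wall} from the rank one class $(1,-1,1)$ (the case $c_1=1$, $s=1$); hence it is a rank one wall no matter which higher-rank (even spherical) classes also lie in its lattice. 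The paper disposes of the boundary case with precisely this one-line observation and only then assumes $\Gamma<\frac{2d}{4d+1}$; substituting that observation for your $d=5$, $M=3$ analysis completes your proof.
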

\begin{proof}
Since we know that the wall with $\Gamma=\frac{2d}{4d+1}$ is rank one, we can assume $\Gamma<\frac{2d}{4d+1}$. Let $a$ be the rank of such a wall. Set $\gamma=\frac{2d}{4d+1}$.  If $a\geq 2$,  then (\ref{ineq2}) leads to $d+1>(2a-1)a\geq 6$. Thus $d$ must be at least $6$ for $W$ to be not rank one. Moreover, when $d=6$, there is no integer $M$ satisfying
$$2d(2a-1)\left(\frac{1}{\gamma}-2\right)=2a-1<M\leq\frac{d+1}{a}$$
for any $a\geq 2$. Thus for $d\leq 6$, all walls with $\Gamma\leq \frac{2d}{4d+1}$ must be rank one walls.
\end{proof}
\begin{exmp}
Let $d=7$. We have a wall $W_\sharp$ inducing the following decomposition of Mukai vectors at its intersection with $x=-2$:
\begin{align*}
    (1,0,-28)=(2,-3,32)+(-1,3,-60).
\end{align*}
This wall has corresponding $\Gamma=\frac{21}{44}$. Recall that the rank one wall given by $(1,-1,k)$ has $\Gamma=\frac{2d}{4d+k}$. Since 
\begin{align*}
    \frac{2\cdot7}{4\cdot7+2}<\frac{21}{44}<\frac{2\cdot7}{4\cdot7+1},
\end{align*}
$W_\sharp$ is not a rank one wall. 
\end{exmp}
Recall that the Brill-Noether wall is given by the spherical class $\mathbf{s}=(-1,2,-4d-1)$. Note that $(\sbf,\vbf)=1$. This wall has corresponding $\Gamma=\frac{4d}{8d+1}$ and intersection $\Wcal$ with $xy$-plane given by
\begin{align*}
    \left(x+2+\frac{1}{4d}\right)^2+y^2=\left(2+\frac{1}{4d}\right)^2-4.
\end{align*}
\begin{cor}\label{OnleOneTwoS}
Suppose $d\leq 7$. Any wall for $\vbf$ with $\Gamma< \frac{4d}{8d+1}$ has rank one or two.
\end{cor}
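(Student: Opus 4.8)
The plan is to specialise the numerical machinery of Section~\ref{Rank1Wall} to $n=2$ and to the cutoff $\gamma=\Gamma_0=\frac{4d}{8d+1}$; the arithmetic input that makes everything collapse is the identity $\frac{1}{\Gamma_0}-2=\frac{1}{4d}$. So fix a wall $W$ for $\vbf=(1,0,-4d)$ with $\Gamma<\Gamma_0$ and let $a$ be its rank. Along the vertical line $x=-2$, $W$ produces a decomposition $\vbf=\abf+\bbf$ with $\abf$ of positive rank $a$; this is legitimate because every such wall meets $x=-2$ (as $\Gamma<\frac12$; see the proof of Proposition~\ref{LargestWallS}), and at the intersection point $y^2=4(\frac{1}{\Gamma}-2)>4(\frac{1}{\Gamma_0}-2)=\frac{1}{d}$, so $\sigma_{-2,y}$ is a stability condition by Lemma~\ref{isStab}.

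First I would substitute $\gamma=\Gamma_0$ into inequality (\ref{ineq2}). Its left-hand side $2d(2a-1)(\frac{1}{\gamma}-2)$ becomes $\frac{2a-1}{2}$, so (\ref{ineq2}) reads $\frac{2a-1}{2}<\frac{d+1}{a}$, i.e.\ $a(2a-1)<2(d+1)$. For $d\le 7$ the right-hand side is at most $16$, which forces $a\le 3$ (and already $a\le 2$ when $d\le 6$, since then $2(d+1)\le 14<15$).

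It remains to exclude $a=3$, and here the quadratic bound alone is too weak: one must use that the quantity $M$ in (\ref{ineq1}) is an \emph{integer}. With $a=3$ and $\gamma=\Gamma_0$, inequality (\ref{ineq1}) becomes $\frac52<M\le\frac{d+1}{3}$, and for $d\le 7$ the upper bound $\frac{d+1}{3}$ is at most $\frac83<3$, so no integer lies in the interval $\left(\frac52,\frac{d+1}{3}\right]$ --- a contradiction. Hence $a\le 2$; since the Brill-Noether wall ($\Gamma=\Gamma_0$) is itself a rank one wall, every wall with $\Gamma\le\Gamma_0$ has rank one or two, as claimed.

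The only genuine obstacle is the borderline case $d=7$: there (\ref{ineq2}) on its own rules out only $a\ge 4$, and excluding $a=3$ really does require the integrality refinement in (\ref{ineq1}). This also indicates that the hypothesis is essentially sharp: for $d=7$ the wall $W_\sharp$ of the preceding example is a genuine rank two wall with $\Gamma<\Gamma_0$, and for $d=8$ the value $M=3$ satisfies $\frac52<M\le 3=\frac{d+1}{3}$, so the numerical argument no longer forbids a rank three wall.
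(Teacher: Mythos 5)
Your proposal is correct and follows essentially the same route as the paper: specializing inequality (\ref{ineq1}) (equivalently (\ref{ineq2})) to $\gamma=\Gamma_0=\frac{4d}{8d+1}$, using $a(2a-1)<2(d+1)$ to bound the rank, and invoking the integrality of $M$ to kill the borderline case $a=3$, $d=7$. The only difference is organizational (the paper phrases it as ``$a\geq 3$ forces $d>\frac{13}{2}$, and $d=7$ admits no integer $M$''), which is mathematically the same argument.
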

\begin{proof}
Let $a$ be the rank of such a wall and  set $\gamma=\frac{4d}{8d+1}$. By (\ref{ineq1}), $\frac{2a-1}{2}<M\leq\frac{d+1}{a}$. Thus if $a\geq 3$, then $d>\frac{(2a-1)a}{2}-1\geq \frac{13}{2}$. Moreover, if $d=7$ and $a\geq 3$, then there is no integer $M$ so that $\frac{2a-1}{2}<M\leq \frac{d+1}{a}$.
\end{proof}
Lastly, we characterize rank two walls between $W_-$ and $\Wcal$.
\begin{lem}\label{Rank2S}
The only (potential) rank two wall for $\vbf$ between $W_-$ and $\Wcal$ is given by the Mukai vector $(2,-3,4d+2)$. Such a wall exists if and only if $d\geq 3$.
\end{lem}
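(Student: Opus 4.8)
The plan is to locate any rank-two wall between $W_-$ and $\Wcal$ by working along the vertical line $x=-2$ in the $xy$-plane, read off the Mukai vector of its rank-two factor, and then determine for which $d$ the resulting numerical data actually defines a flopping wall.

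First I would note that a wall $W$ for $\vbf$ with $\frac{2d}{4d+1}<\Gamma_W<\frac{4d}{8d+1}$ necessarily meets the line $x=-2$ at a point $\sigma_{-2,y_0}$ that is a stability condition: since $\Gamma_W<\frac12$ the defining semicircle passes over $x=-2$, at height $y_0$ with $y_0^2=\frac{4}{\Gamma_W}-8$, and $\Gamma_W<\Gamma_0=\frac{4d}{8d+1}$ forces $y_0^2>\frac1d$, so Lemma~\ref{isStab} applies. By Lemma~\ref{SnoTSS} this point is not on a totally semistable wall, so $\vbf$ admits a two-term decomposition $\vbf=\abf+\bbf$ at $\sigma_{-2,y_0}$ exactly as in the proof of Proposition~\ref{LargestWallS}. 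Since $W$ has rank two, the factor $\abf=(a,b,c)$ with $a\geq 1$ has $a=2$, and the computation in that proof (the quantity $i:=b+2a$ must equal $1$) forces $b=-3$; the wall equation there gives $c=4d+M$ with $M:=\frac{3dy_0^2}{2}$, a positive integer.

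Next, eliminating $y_0$ between $y_0^2=\frac{2M}{3d}$ and $y_0^2=\frac{4}{\Gamma_W}-8$ yields $\Gamma_W=\frac{6d}{M+12d}$. The inequality $\Gamma_W>\frac{2d}{4d+1}$ reduces to $M<3$, and $\Gamma_W<\frac{4d}{8d+1}$ reduces to $M>\frac32$, so $M=2$ and $\abf=(2,-3,4d+2)$; this proves uniqueness. For existence I would set $\bbf=\vbf-\abf=(-1,3,-8d-2)$ and record $\abf^2=2d-8$, $\bbf^2=2d-4$, $(\abf,\vbf)=4d-2$ and $\vbf^2=8d$, so $\langle\vbf,\abf\rangle$ is hyperbolic and $0<(\abf,\vbf)<\vbf^2$ for every $d\geq1$. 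The only remaining requirement is the Bogomolov inequality on the two factors: $\bbf^2\geq-2$ always holds, while $\abf^2=2d-8\geq-2$ holds exactly when $d\geq3$. For $d\geq3$ the associated $\Gamma=\frac{6d}{12d+2}$ lies strictly between $\Gamma(W_-)$ and $\Gamma_0$ and in $(0,\frac12)$, so by \cite[Theorem~5.7(b)]{BM14b} and the flopping-wall criterion recalled in Section~\ref{pre-hilb}, $(2,-3,4d+2)$ defines a flopping wall in the required range; for $d\leq2$ we have $\abf^2<-2$, so $(2,-3,4d+2)$ is not the Mukai vector of any semistable object and no such wall exists.

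The numerics that force $M=2$ and the existence check are routine; the step I expect to need the most care is the first one --- verifying that on a rank-two wall the relevant decomposition of $\vbf$ along $x=-2$ really has its rank-two factor of the shape $(2,-3,c)$ --- for which I would rely on the central-charge bookkeeping already carried out in the proof of Proposition~\ref{LargestWallS}.
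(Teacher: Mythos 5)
Your proposal is correct and follows essentially the same route as the paper: it restricts to the line $x=-2$, uses the bookkeeping from the proof of Proposition~\ref{LargestWallS} (forcing $a=2$, $i=1$, hence $b=-3$) together with the range $\frac{1}{d}<y_0^2<\frac{2}{d}$ (which you encode equivalently via $\frac32<M<3$) to pin down $c=4d+2$, and then checks the numerical conditions of \cite[Theorem 5.7(b)]{BM14b}, where the only binding constraint is $\abf^2=2d-8\geq-2$, i.e.\ $d\geq 3$. The only differences are cosmetic: the paper bounds $c$ directly by $4d+\tfrac32<c<4d+3$ rather than passing through $M$ and $\Gamma_W$, and records $0<(\abf,\vbf)<\frac{\vbf^2}{2}$ where you record the weaker (also sufficient, and in fact implied) bound $0<(\abf,\vbf)<\vbf^2$.
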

\begin{proof}
We adapt the notation from the proof of Proposition~\ref{LargestWallS}. Suppose that the vertical line $x=-2$ intersects a rank two wall between $W_-$ and $\Wcal$ at $(-2,y)$. Then $\frac{1}{d}<y^2<\frac{2}{d}$. Now
\begin{align*}
    4d+\frac{3}{2}<c=4d+\frac{3}{2}y^2d<4d+3.
\end{align*}
Since $c\in \Z$, we must have $c=4d+2$. It follows that the wall is given by $\abf=(2,-3,4d+2)$. 

Since $0<(\abf,\vbf)=4d-2<\frac{\vbf^2}{2}$, such a wall exists if and only if the following conditions are satisfied:
\begin{itemize}
    \item $\abf^2=2d-8\geq-2$,
    \item $(\vbf-\abf)^2=2d-4\geq -2$,
    \item $\abf^2\cdot\vbf^2-(\abf,\vbf)^2=-4-48d<0$.
\end{itemize}
It is clear that these amount to $d\geq 3$.
\end{proof}

To conclude this subsection, we provide a quick algorithm for finding all the walls for $\vbf$ with $\Gamma<\Gamma_0=\frac{4d}{8d+1}$. Recall that a wall is associated to a primitive hyperbolic rank two lattice containing $\vbf$. Our algorithm finds an element in the lattice that is not a multiple of $\vbf$.
\begin{prop}\label{AlgS}
Let $\abf=(a,b,c)$ be a primitive Mukai vector with $a\geq 1$. Then there exists a wall for $\vbf$ whose lattice contains $\abf$ so that its corresponding $\Gamma$ satisfies $0< \Gamma<\frac{4d}{8d+1}$ if the following conditions are satisfied:
\begin{enumerate}
    \item $a<\frac14+\sqrt{\frac{1}{16}+d+1}$,
    \item $b=1-2a$,
    \item the integer $M:=c-4d(a-1)$ satisfies $\frac{2a-1}{2}<M\leq \frac{d+1}{a}$.
\end{enumerate}
\end{prop}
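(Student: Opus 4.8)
The plan is to reverse-engineer the computation from the proof of Proposition~\ref{LargestWallS}, specialized to $n=2$, and then feed it into the sufficiency direction of the wall criterion \cite[Theorem 5.7(b)]{BM14b}. First I would verify that the three numerical hypotheses on $\abf=(a,b,c)$ force $\abf$ and $\vbf=(1,0,-4d)$ to span a primitive hyperbolic rank two lattice $\Hcal$. Condition (2), $b=1-2a$, is exactly the condition that $i:=b+2a=1$, so that along $x=-2$ the decomposition $\vbf=\abf+(\vbf-\abf)$ has both summands with the correct imaginary part of central charge (as in Remark~\ref{rem1}); this is the constraint that places the wall on our wall-crossing path $x=-2$. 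Condition (3), rewritten as $c=4da-4d+M$, together with the wall equation from the proof of Proposition~\ref{LargestWallS} (namely $c=adn^2-2idn+M$ with $n=2,i=1$, i.e. $c=4da-4d+M$), shows that $M$ is precisely the integer $\frac{(2a-1)dy^2}{2}$ on the putative wall, so that solving for $y^2$ gives a genuine positive real number; the upper bound $M\le\frac{d+1}{a}$ is (via the same algebra as in the derivation of \eqref{ineqI}) the Bogomolov inequality $\abf^2\ge -2$, and the lower bound $\frac{2a-1}{2}<M$ is the hyperbolicity of $\Hcal$, i.e. $\abf^2\vbf^2-(\abf,\vbf)^2<0$. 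Condition (1) is the specialization of \eqref{ineqcomp} at $\gamma=\Gamma_0$, $n=2$, $i=1$, guaranteeing there is room for such a wall below $\Gamma_0$.

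Concretely, the key steps, in order: (i) compute $(\abf,\vbf)=4da-c=4d-M$ and check $0<(\abf,\vbf)<\vbf^2/2=4d$ using the bounds on $M$ from (3); (ii) compute $\abf^2=db^2-2ac$ and check $\abf^2\ge -2$ is equivalent to $M\le\frac{d+1}{a}$, and similarly $(\vbf-\abf)^2\ge -2$; (iii) compute the determinant $\abf^2\vbf^2-(\abf,\vbf)^2$ and check it is negative, equivalent to $\frac{2a-1}{2}<M$; (iv) conclude by \cite[Theorem 5.7(b)]{BM14b} that $\Hcal=\langle \vbf,\abf\rangle$ (which is primitive because $\abf$ is primitive and $\abf$ is not a multiple of $\vbf$) defines an actual wall $W$ for $\vbf$ in $\Stab^\dagger(S)$; (v) compute its associated $\Gamma$ from the requirement $\tilde H-\Gamma B\in\vbf^\perp\cap\abf^\perp$, getting $\Gamma=\frac{2dc_1}{dn^2+s}$-type formula $=\frac{-2db}{dn^2 a + c}$ appropriately normalized, and check $0<\Gamma<\frac{4d}{8d+1}$ using conditions (1) and (3). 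The computation of $\Gamma$ in terms of $(a,b,c)$ is the one bookkeeping point that needs care, since one must divide out by the correct factor so that the pairing with $\vbf$ and $\abf$ both vanish; but it is forced and elementary.

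The main obstacle, such as it is, is purely organizational rather than conceptual: one must confirm that the inequalities in conditions (1)--(3) line up exactly with the three requirements of \cite[Theorem 5.7(b)]{BM14b}, namely $\abf^2\ge -2$, $(\vbf-\abf)^2\ge -2$ (equivalently, since $\vbf^2=8d>0$ and these are the two Mukai vectors in the decomposition, both classes have $\mathrm{self{-}intersection}\ge -2$), and the potential-wall lattice being hyperbolic with $(\abf,\vbf)$ in the admissible range $0<(\abf,\vbf)<\vbf^2$ (in fact $<\vbf^2/2$ after replacing $\abf$ by $\vbf-\abf$ if necessary). Since Section~\ref{Rank1Wall} already carried out the analogous identifications for the rank one case — there the analysis produced exactly conditions of this shape with $a=1$ — the present statement is the observation that dropping the restriction $a=1$ but keeping $b=1-2a$ and the same inequality on $M$ still suffices; no totally semistable wall issue arises because by Lemma~\ref{SnoTSS} no stability condition $\sigma_{-2,y}$ lies on a totally semistable wall for $\vbf$, so the wall is genuinely a flopping wall and the moduli space actually changes. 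I expect the whole argument to be under a page.
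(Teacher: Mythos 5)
Your proposal is correct and follows essentially the same route as the paper: verify the numerical hypotheses of \cite[Theorem 5.7(b)]{BM14b} directly from conditions (2) and (3) (namely $\abf^2=2(d-aM)\geq-2$, $(\vbf-\abf)^2=2(d+(1-a)M)>-2$, $\abf^2\vbf^2-(\abf,\vbf)^2=8dM(1-2a)-M^2<0$, and $0<(\abf,\vbf)=4d-M<\vbf^2/2$), then deduce $0<\Gamma<\frac{4d}{8d+1}$ from $\frac{2a-1}{2}<M$ via the computation in Proposition~\ref{LargestWallS}. Two harmless slips: $\abf^2=2db^2-2ac$ (not $db^2-2ac$, since $H^2=2d$), and the lower bound $\frac{2a-1}{2}<M$ is strictly stronger than hyperbolicity for $a\geq 2$ — its actual role, as you use it in your final step, is to force $\Gamma<\Gamma_0$.
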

\begin{rem}
The three conditions come from the proof of Proposition~\ref{LargestWallS} with $\gamma=\frac{4d}{8d+1}$. The first gives finitely many choices for $a$, the second determines $b$ in terms of $a$, and the third gives finitely many choices for $c$, again in terms of $a$ (of course the inequality in (1) follows from the one in (3)). Any $\abf$ that realizes the rank of such a wall will satisfy these conditions. This guarantees that our algorithm will not miss any walls with $0< \Gamma<\frac{4d}{8d+1}$
\end{rem}
\begin{proof}
By conditions (2) and (3) we have
\begin{align*}
    \abf^2&=2(d-aM)\geq -2, \\
    (\vbf-\abf)^2&=2(d+(1-a)M)>-2, \\
    \abf^2\cdot\vbf^2-(\abf,\vbf)^2&=8dM(1-2a)-M^2<0, \\
    0<(\abf,\vbf)&=4d-M<\frac{\vbf^2}{2}.
\end{align*}
By \cite[Theorem 5.7(b)]{BM14b}, there is a flopping wall for $\vbf$ whose lattice contains $\abf$. The bound for $\Gamma$ follows from the condition $\frac{2a-1}{2}<M$ and the proof of Proposition~\ref{LargestWallS}.
\end{proof}

\subsection{The $M(0,2,-1)$ side} Similarly to the previous subsection, we start with a (rough) upper bound for the rank of a wall for $\vbf'=(0,2,-1)$.
\begin{prop}\label{RankBoundM}
Let $\Gamma_0\leq\gamma'<\frac{1}{2}$. A wall for $\vbf'$ with $\Gamma>(\geqslant) \gamma'$ has rank 
\begin{align*}
    r<(\leqslant)-d\left(\frac{1}{\gamma'}-2\right)+\sqrt{d^2\left(\frac{1}{\gamma'}-2\right)^2+4d(d+1)\left(\frac{1}{\gamma'}-2\right)}.
\end{align*}
\end{prop}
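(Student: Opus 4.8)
\textbf{Plan for the proof of Proposition~\ref{RankBoundM}.} The plan is to mimic the argument used on the Hilbert scheme side for Proposition~\ref{LargestWallS} and Lemma~\ref{RankGen}, but now applied to $\vbf'=(0,2,-1)$ along the vertical path $x=0$, which by Lemma~\ref{MnoTSS} avoids totally semistable walls. First I would recall that a wall for $\vbf'$ corresponding to $\Gamma$ meets the $xy$-plane in the semicircle $\left(x+\tfrac{1}{2dn}\right)^2+y^2=\left(\tfrac{1}{2dn}\right)^2+\tfrac{\Gamma}{2d^2(1-n\Gamma)n}$ with $n=2$, so that all such walls pass through the line $x=0$ and larger $\Gamma$ gives larger radius; in particular a wall with $\Gamma>\gamma'$ meets $x=0$ at some $y$ with $y^2>\tfrac{\gamma'}{4d^2(1-2\gamma')}$. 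Crossing it at $(0,y)$ induces a decomposition $(0,2,-1)=(a,b,c)+(-a,2-b,-1-c)$ with $a,b,c\in\Z$, where both summands are Mukai vectors of objects in $\Coh^0(S)$; the argument-equality of the three central charges $Z_{0,y}$ forces $0<b<2$ wait — here $n=2$ so $0<b<2$ would give $b=1$, but I should phrase it for the general-$n$-flavoured bound. Actually re-reading, the proposition is stated only for $\vbf'=(0,2,-1)$, so I take $n=2$ throughout: then $b=1$, $a\neq 0$, and without loss of generality $a\geq 1$ after possibly passing to $(-a,2-b,-1-c)$ and using that $\Gamma>\Gamma_0$ forces the correct sign.

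Next I would extract the wall condition. From $\mathrm{Im}\,Z_{0,y}(a,1,c)/\mathrm{Im}\,Z_{0,y}(0,2,-1)=\mathrm{Re}\,Z_{0,y}(a,1,c)/\mathrm{Re}\,Z_{0,y}(0,2,-1)$ (the two central charges share an argument), together with $Z_{0,y}(0,2,-1)=-2d\cdot(-1)\cdot\tfrac12\cdot(\ldots)$ — concretely $\mathrm{Re}\,Z_{0,y}(0,2,-1)=dy^2\cdot 2-$ wait, let me just say: the relation $c=ad y^2-\tfrac{b}{n}=ady^2-\tfrac12$ from the proof of Proposition~\ref{LargestWallM} gives $ady^2=c+\tfrac12$, so $y^2=\tfrac{2c+1}{2ad}$. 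Combining with $y^2>\tfrac{\gamma'}{4d^2(1-2\gamma')}$ yields a lower bound $2c+1>\tfrac{a\gamma'}{2d(1-2\gamma')}$, i.e. a lower bound on $c$ linear in $a$. On the other hand the Bogomolov inequality $(a,1,c)^2=d-2ac\geq -2$, i.e. $2ac\leq d+2$, gives $c\leq\tfrac{d+2}{2a}$, an upper bound on $c$. Chaining the two inequalities $\tfrac{a\gamma'}{2d(1-2\gamma')}-1<2c\leq\tfrac{d+2}{a}$ produces a single quadratic inequality in $a$:
\begin{align*}
\frac{a^2\gamma'}{2d(1-2\gamma')}-a<\frac{d+2}{a}\cdot a =d+2,
\end{align*}
hmm, that is not quite it — I would instead write $\tfrac{a\gamma'}{2d(1-2\gamma')}-1<\tfrac{d+2}{a}$, clear denominators to get $a^2\gamma'-2ad(1-2\gamma')<2d(d+2)(1-2\gamma')$, and solve the resulting quadratic $\gamma' a^2-2d(1-2\gamma')a-2d(d+2)(1-2\gamma')<0$ for $a$ by the quadratic formula. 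Substituting $\tfrac{1}{\gamma'}-2=\tfrac{1-2\gamma'}{\gamma'}$ and simplifying should turn the bound $a<\tfrac{1}{2\gamma'}\left(2d(1-2\gamma')+\sqrt{4d^2(1-2\gamma')^2+8d(d+2)\gamma'(1-2\gamma')}\right)$ into the stated form $r<-d(\tfrac1{\gamma'}-2)+\sqrt{d^2(\tfrac1{\gamma'}-2)^2+4d(d+1)(\tfrac1{\gamma'}-2)}$; I'd double-check the constant ($d+1$ versus $d+2$) — the discrepancy likely comes from using the sharper integrality of $c$ or from working with the summand $(-a,1,-1-c)$ whose Bogomolov bound is $d-2a(1+c)\geq -2$ giving $a(1+c)\leq\tfrac{d+2}{2}$ and hence $a+ac\leq\tfrac{d+2}{2}$, i.e. $ac\leq\tfrac{d+2}{2}-a$, a slightly better bound — and I'd record that the sharper constant $d+1$ is obtained by combining the two Bogomolov inequalities. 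Finally the rank of the wall is by definition the smallest positive rank among the Mukai vectors in the decomposition at $x=0$, which is $\leq a$ (after the normalization $a\geq 1$); since the bound above holds for every such decomposition, it holds for the rank, and the weak-inequality version follows by replacing $<$ with $\leq$ throughout when $\Gamma\geq\gamma'$.

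The routine part is the quadratic-formula manipulation; the only genuinely delicate point is getting the sharpest correct constant in the square root, i.e. justifying why one may use the Bogomolov inequalities for \emph{both} summands (and possibly also hyperbolicity of the associated lattice $\abf'^2\vbf'^2-(\abf',\vbf')^2<0$, as in the rank-one analysis of Section~\ref{Rank1Wall}) to squeeze $ac$ from above, rather than just $(a,1,c)^2\geq -2$. I expect the main obstacle to be bookkeeping the two-sided constraint on $c$ and confirming that the normalization "$a\geq 1$, $b=1$" is legitimate — this rests on the argument-equality forcing $0<b<n=2$ exactly as in the proof of Proposition~\ref{LargestWallM}, together with the observation that $\Gamma>\Gamma_0$ pins down which summand has positive rank, so no generality is lost. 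Once these sign/normalization issues are settled, the inequality chain and the quadratic solve give the claimed bound immediately.
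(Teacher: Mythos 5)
Your route is the same as the paper's: cross the wall at its intersection with the line $x=0$, write the induced two-term decomposition $(0,2,-1)=(a,1,c)+(-a,1,-1-c)$ with $b=1$ forced as in Proposition~\ref{LargestWallM} and $a\geq 1$ realizing the rank (legitimate, since by Definition~\ref{RankDef} the rank is read off from this decomposition at $x=0$ and only one summand has positive rank), use the wall condition $c=ady^2-\tfrac12$ together with $y^2>\tfrac{1}{4d^2(1/\gamma'-2)}$ to bound $c$ from below, bound $c$ from above by Bogomolov, and solve a quadratic in $a$. The genuine gap is the numerical input: you compute the Mukai squares with $c_1^2=d$ rather than $c_1^2=H^2=2d$, writing $(a,1,c)^2=d-2ac$ and $(-a,1,-1-c)^2=d-2a(1+c)$. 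The correct squares are $2d-2ac$ and $2d-2a(1+c)$, and the single inequality $(-a,1,-1-c)^2\geq-2$ already yields the sharp upper bound $c\leq\tfrac{d+1}{a}-1$; no integrality of $c$ and no combination of the two Bogomolov inequalities is needed. Chaining this with $c>\tfrac{a}{4d(1/\gamma'-2)}-\tfrac12$ gives $\tfrac{a}{4d(1/\gamma'-2)}+\tfrac12<\tfrac{d+1}{a}$, i.e.\ $a^2+2d\left(\tfrac{1}{\gamma'}-2\right)a-4d\left(\tfrac{1}{\gamma'}-2\right)(d+1)<0$, whose positive root is exactly the stated bound.

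As written, your chains do not reach the statement: with your first-summand bound $c\leq\tfrac{d+2}{2a}$ the linear term outside the square root even comes out with the wrong sign ($+d(\tfrac1{\gamma'}-2)$ instead of $-d(\tfrac1{\gamma'}-2)$), and with your second-summand bound $c\leq\tfrac{d+2}{2a}-1$ you would get $2d(\tfrac1{\gamma'}-2)(d+2)$ rather than $4d(\tfrac1{\gamma'}-2)(d+1)$ under the root; you flag the $d+1$ versus $d+2$ discrepancy but the fixes you propose would not resolve it. Once the Mukai squares are corrected, the rest of your argument is sound and coincides with the paper's proof, including the weak-inequality variant obtained by replacing strict with non-strict inequalities when $\Gamma\geq\gamma'$.
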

\begin{proof}
We prove the strict inequality version; the non-strict version follows similarly. Throughout this proof, by a wall we mean a wall for $\vbf'$. A wall $W'_{\Gamma}$ corresponding to $\Gamma$ intersects the $xy$-plane in the semicircle given by
\begin{align*}
    \left(x+\frac{1}{4d}\right)^2+y^2=\left(\frac{1}{4d}\right)^2+\frac{\Gamma}{4d^2(1-2\Gamma)}.
\end{align*}
Note that $W'_\Gamma$ intersects the vertical line $x=0$ at
$$y^2=\frac{1}{4d^2\left(\frac{1}{\Gamma}-2\right)}>\frac{1}{4d^2\left(\frac{1}{\gamma'}-2\right)}.$$
By the proof of Proposition~\ref{LargestWallM}, we can suppose that the rank of $W'_\Gamma$ is realized at $(0,y)$ by a decomposition of Mukai vectors
\begin{align*}
    (0,2,-1)=(a,1,c)+(-a,1,-1-c),
\end{align*}
where $a$, $c\in \Z$, $a\geq 1$ (and hence is the rank), and the two Mukai vectors on the right are those of objects in $\Coh^{0}(S)$. The wall condition gives
$$c=ady^2-\half>\frac{a}{4d\left(\frac{1}{\gamma'}-2\right)}-\half.$$
Since $(-a,1,-1-c)^2\geq-2$, we have $c\leq \frac{d+1}{a}-1$. Altogether we obtain
\begin{align}\label{ineq3}
    \frac{a}{4d\left(\frac{1}{\gamma'}-2\right)}-\half<c\leq \frac{d+1}{a}-1,
\end{align}
which leads to 
\begin{align}\label{ineq4}
    \frac{a}{4d\left(\frac{1}{\gamma'}-2\right)}-\half< \frac{d+1}{a}-1,
\end{align}
The proposition follows from solving this quadratic inequality in $a$.
\end{proof}
By the previous section, we have $d$ rank one walls for $\vbf'$ given by $(1,1,k)$ for $1\leq k\leq d$. By Proposition~\ref{LargestWallM}, the wall corresponding to $(1,1,d)$ is the largest wall for $\vbf'$ in the $xy$-plane. Note that the wall $W'_-$ given by $(1,1,1)$ is the smallest among rank one walls, with corresponding $\Gamma=\frac{6d}{12d+1}$. Our next result shows that when the degree of the K3 surface is small, there are no other walls in between the rank one walls.

\begin{cor}\label{RankOneM}
Suppose $d\leq 6$. Any wall for $\vbf'$ with $\Gamma\geq\frac{6d}{12d+1}$ is a rank one wall.
\end{cor}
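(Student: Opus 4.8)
The plan is to mimic the proof of Corollary~\ref{RankOne} on the $\hilb{4d+1}$ side, transporting the argument to $\vbf'=(0,2,-1)$ via the inequalities established in Proposition~\ref{RankBoundM}. Since we already know the wall $W'_-$ given by $(1,1,1)$ with $\Gamma=\frac{6d}{12d+1}$ is a rank one wall, it suffices to show there is no wall of rank $\geq 2$ with $\Gamma\geq\frac{6d}{12d+1}$. So I would set $\gamma'=\frac{6d}{12d+1}$, giving $\frac{1}{\gamma'}-2 = \frac{1}{6d}$, and feed this into inequality (\ref{ineq4}) from the proof of Proposition~\ref{RankBoundM}. With $n=2$ and the decomposition $(0,2,-1)=(a,1,c)+(-a,1,-1-c)$ forced along $x=0$, the inequality (\ref{ineq3}) becomes
\begin{align*}
    \frac{a}{4d\cdot\frac{1}{6d}}-\half=\frac{3a}{2}-\half<c\leq \frac{d+1}{a}-1,
\end{align*}
so that $c$ is an integer with $\frac{3a-1}{2}<c\leq\frac{d+1}{a}-1$.

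First I would observe that for such a $c$ to exist we need $\frac{3a-1}{2}<\frac{d+1}{a}-1$, i.e. $3a^2+a<2d+2$, which for $a\geq 2$ forces $2d+2>14$, hence $d\geq 7$. Thus for $d\leq 6$ no wall of rank $a\geq 2$ can occur with $\Gamma\geq\frac{6d}{12d+1}$, and the only walls in this range are the rank one walls $(1,1,k)$, $1\leq k\leq d$. (One should double-check the borderline: the inequality (\ref{ineq4}) is strict, so even the case $3a^2+a=2d+2$ is excluded, and there is no integrality subtlety to worry about here since the lower bound on $c$ already rules out $d=6$.) This is the entire content of the corollary, so I would then conclude.

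The main (minor) obstacle is bookkeeping: making sure that the reduction "rank $\geq 2$ with $\Gamma\geq\gamma'$ implies a bad decomposition along $x=0$" is legitimate, i.e. that we may invoke Proposition~\ref{LargestWallM}'s normalization in which the rank of the wall is realized at $(0,y)$ by a decomposition with middle coordinate $b=1$ (using $0<b<2$), and that Lemma~\ref{MnoTSS} guarantees we are not on a totally semistable wall so the numerical analysis applies. Once that setup is in place, the argument is a two-line inequality chase identical in spirit to Corollary~\ref{RankOne}. I would also remark, as in the $\hilb{}$ side, that integrality of the would-be $c$ can sometimes be used to push the bound further, but it is not needed here for $d\leq 6$.

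\begin{proof}
Since the wall given by $(1,1,1)$ with $\Gamma=\frac{6d}{12d+1}$ is a rank one wall, we may assume $\Gamma>\frac{6d}{12d+1}$ and show that any such wall has rank one. Set $\gamma'=\frac{6d}{12d+1}$, so that $\frac{1}{\gamma'}-2=\frac{1}{6d}$. By the proof of Proposition~\ref{RankBoundM}, the rank $a$ of such a wall is realized at its intersection $(0,y)$ with $x=0$ by a decomposition
\begin{align*}
    (0,2,-1)=(a,1,c)+(-a,1,-1-c)
\end{align*}
with $a\geq 1$ and $c\in\Z$, and inequality (\ref{ineq3}) reads
\begin{align*}
    \frac{3a-1}{2}<c\leq \frac{d+1}{a}-1.
\end{align*}
In particular $\frac{3a-1}{2}<\frac{d+1}{a}-1$, that is $3a^2+a<2d+2$. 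If $a\geq 2$ this gives $2d+2>14$, hence $d\geq 7$. Therefore for $d\leq 6$ we must have $a=1$, and the wall is a rank one wall.
\end{proof}
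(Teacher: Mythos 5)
Your proof is correct and is essentially the paper's argument: both reduce to applying Proposition~\ref{RankBoundM} with $\gamma'=\frac{6d}{12d+1}$ after discarding the known rank one wall $W'_-$. The only cosmetic difference is that the paper plugs into the closed-form rank bound $a<-\frac16+\sqrt{\frac{1}{36}+\frac23(d+1)}\leq 2$, whereas you unwind the same quadratic inequality (\ref{ineq4}) directly to get $3a^2+a<2d+2$, which is equivalent.
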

\begin{proof}
Since we know that the wall $W_-'$ for $\vbf'$ with $\Gamma=\frac{6d}{12d+1}$ has rank one, we can assume $\Gamma>\frac{6d}{12d+1}$. Let $a$ be the rank of such a wall. Set $\gamma'=\frac{6d}{12d+1}$. By the previous proposition,
\begin{align*}
    a<-\frac16+\sqrt{\frac{1}{36}+\frac23(d+1)}\leq 2.
\end{align*}
It follows that $a=1$.
\end{proof}
\begin{exmp}
Let $d=7$. We have a wall $W_\sharp'$ whose intersection with $x=0$ induces the decomposition of Mukai vectors
\begin{align*}
    (0,2,-1)=(2,1,3)+(-2,1,-4).
\end{align*}
This wall has corresponding $\Gamma=\frac{49}{99}$. Recall that the rank one wall for $\vbf'$ whose lattice contains $(1,1,k)$ has $\Gamma=\frac{2d(2k+1)}{4d(2k+1)+1}$. Since 
\begin{align*}
    \frac{3\cdot14}{3\cdot28+1}<\frac{49}{99}<\frac{5\cdot14}{5\cdot28+1},
\end{align*}
$W'_\sharp$ is not a rank one wall for $\vbf'$ and $W'_\sharp$ is in between the rank one walls given by $(1,1,1)$ and $(1,1,2)$. 
\end{exmp}
We now consider the Brill-Noether wall for $\vbf'$ given by $\sbf'=(1,0,1)$, corresponding to $\Gamma=\frac{4d}{8d+1}$. Its intersection $\Wcal'$ with the $xy$-plane is given by
\begin{align*}
    \left(x+\frac{1}{4d}\right)^2+y^2=\frac{1}{16d^2}+\frac{1}{d}.
\end{align*}
\begin{cor}\label{OnlyOneTwoM}
Suppose $d\leq 9$. Any wall for $\vbf'$ with $\Gamma> \frac{4d}{8d+1}$ has rank one or two.
\end{cor}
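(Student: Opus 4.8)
The plan is to deduce Corollary~\ref{OnlyOneTwoM} directly from Proposition~\ref{RankBoundM}, in the same way that Corollary~\ref{RankOneM} was, but now pushing $\gamma'$ all the way down to the Brill--Noether value $\Gamma_0=\frac{4d}{8d+1}$ rather than stopping at $\frac{6d}{12d+1}$. This choice is admissible since $\Gamma_0\leq\gamma'<\half$, and the first thing I would record is the elementary identity $\frac{1}{\gamma'}-2=\frac{8d+1}{4d}-2=\frac{1}{4d}$, which is exactly what makes $\gamma'=\Gamma_0$ convenient.

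Substituting $\frac{1}{\gamma'}-2=\frac{1}{4d}$ into the strict version of the bound in Proposition~\ref{RankBoundM}, any wall for $\vbf'$ with $\Gamma>\Gamma_0$ has rank $r$ satisfying
\begin{align*}
    r<-d\cdot\frac{1}{4d}+\sqrt{d^2\cdot\frac{1}{16d^2}+4d(d+1)\cdot\frac{1}{4d}}=-\frac{1}{4}+\sqrt{\frac{1}{16}+(d+1)}=\frac{-1+\sqrt{16d+17}}{4}.
\end{align*}
For $d\leq 9$ one has $16d+17\leq 161<169$, hence $r<\frac{-1+13}{4}=3$, so $r\leq 2$; that is, every wall for $\vbf'$ with $\Gamma>\frac{4d}{8d+1}$ has rank one or two, which would complete the proof.

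I do not expect any genuine obstacle here: the entire content is the arithmetic above together with the numerical check $16d+17<169\iff d\leq 9$. The one point worth flagging is that this bound is essentially sharp for the method --- it fails already at $d=10$, where it only yields $r<\frac{-1+\sqrt{177}}{4}\approx 3.08$. To reach the conclusion for $d=10$ (or to locate the first degree where a genuine rank-three wall appears) one should not use Proposition~\ref{RankBoundM} as a black box but instead return to inequality (\ref{ineq3}): writing the decomposition that realizes the rank at $x=0$ as $(0,2,-1)=(a,1,c)+(-a,1,-1-c)$, the specialization $\gamma'=\Gamma_0$ turns (\ref{ineq3}) into $a-\half<c\leq\frac{d+1}{a}-1$, so the integrality of $c$ excludes $a\geq 3$ for all $d\leq 10$, exactly in the spirit of the proof of Corollary~\ref{OnlyOneTwoS}; a complete analysis of rank-three walls would then also require checking the hyperbolicity and Bogomolov conditions of \cite[Theorem~5.7(b)]{BM14b}. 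None of this is needed for the stated corollary, however.
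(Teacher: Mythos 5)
Your proof is correct and is essentially the paper's own argument: both substitute $\gamma'=\Gamma_0=\frac{4d}{8d+1}$ (so $\frac{1}{\gamma'}-2=\frac{1}{4d}$) into the strict bound of Proposition~\ref{RankBoundM} to get $r<-\frac14+\sqrt{\frac{1}{16}+d+1}$, and then check numerically that this is less than $3$ for $d\leq 9$ (your exact check $16d+17\leq 161<169$ replaces the paper's approximate evaluation $\approx 2.92$). The remarks about $d=10$ and inequality~(\ref{ineq3}) are extra observations not needed for the statement.
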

\begin{proof}
Let $r$ be the rank of such a wall and set $\gamma'=\frac{4d}{8d+1}$. By the proposition,
\begin{align*}
    r<-\frac14+\sqrt{\frac{1}{16}+d+1}\leq -\frac14+\sqrt{\frac{1}{16}+9+1}\approx 2.92.
\end{align*}
\end{proof}
\begin{lem}\label{Rank2M}
The only (potential) rank two wall for $\vbf'$ between $W'_-$ and $\Wcal'$ is given by the Mukai vector $(2,1,2)$. Such a wall exists if and only if $d\geq 5$.
\end{lem}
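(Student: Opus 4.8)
The plan is to mirror the proof of Lemma~\ref{Rank2S}, now working along the vertical line $x=0$ (as in the proofs of Propositions~\ref{LargestWallM} and~\ref{RankBoundM}) instead of $x=-2$, and using the decomposition of $\vbf'$ appearing in those proofs.

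First I would pin down the admissible range of $y$. From those proofs, $W'_-$ (whose lattice contains $(1,1,1)$) and $\Wcal'$ (the Brill-Noether wall, whose lattice contains $(1,0,1)$) meet the line $x=0$ at $y^2=\frac{3}{2d}$ and $y^2=\frac1d$ respectively, and larger $\Gamma$ corresponds to larger $y$ on $x=0$; hence any rank two wall strictly between $W'_-$ and $\Wcal'$ meets $x=0$ at a point $(0,y)$ with $\frac1d<y^2<\frac3{2d}$. By the proof of Proposition~\ref{RankBoundM}, the rank of such a wall is realized at $(0,y)$ by a decomposition $\vbf'=(0,2,-1)=(a,b,c)+(-a,2-b,-1-c)$ with $a$ the rank (so $a=2$) and $0<b<2$ (the two summands being Mukai vectors of objects in $\Coh^0(S)$), hence $b=1$; the wall condition then gives $c=2dy^2-\half$. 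Substituting $\frac1d<y^2<\frac3{2d}$ yields $\frac32<c<\frac52$, so $c=2$, and the wall is the one whose associated lattice contains $\abf':=(2,1,2)$. (That lattice consists of the classes $(2m,m+2n,2m-n)$, all of even rank, so the wall really is of rank two.)

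Next I would decide when this wall actually occurs. Set $\bbf':=\vbf'-\abf'=(-2,1,-3)$, so $0<(\bbf',\vbf')=4d-2<\frac12\vbf'^2=4d$. By \cite[Theorem 5.7(b)]{BM14b} the wall exists (and is a flopping wall) if and only if $\abf'^2\geq-2$, $\bbf'^2\geq-2$, and the lattice $\langle\abf',\vbf'\rangle$ is hyperbolic. One computes $\abf'^2=2d-8$, $\bbf'^2=2d-12$, and $\abf'^2\cdot\vbf'^2-(\abf',\vbf')^2=-80d-4<0$; the hyperbolicity condition is automatic, $\abf'^2\geq-2$ amounts to $d\geq3$, and the binding constraint $\bbf'^2\geq-2$ amounts to $d\geq5$. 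When $d\geq5$ the wall meets $x=0$ at $y^2=\frac{5}{4d}\in(\frac1d,\frac3{2d})$, confirming that it lies strictly between $W'_-$ and $\Wcal'$.

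The one step needing care is the choice of representative of the lattice when applying \cite[Theorem 5.7(b)]{BM14b}: since $(\abf',\vbf')=4d+2$ exceeds $\frac12\vbf'^2$, the criterion must be tested against the quotient class $\bbf'=\vbf'-\abf'$, and it is this that makes $\bbf'^2\geq-2$ (rather than $\abf'^2\geq-2$) the sharp condition, yielding the bound $d\geq5$ instead of $d\geq3$. I expect this to be the main subtlety; the rest is routine arithmetic parallel to Lemma~\ref{Rank2S}.
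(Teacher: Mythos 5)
Your proposal is correct and follows essentially the same route as the paper: pin down the intersection with $x=0$ to $\frac{1}{d}<y^2<\frac{3}{2d}$, use the decomposition from the proofs of Propositions~\ref{LargestWallM} and~\ref{RankBoundM} to force $b=1$ and $c=2$, and then check the numerical conditions of \cite[Theorem 5.7(b)]{BM14b} for $\abf'=(2,1,2)$ and $\vbf'-\abf'=(-2,1,-3)$, where $(\vbf'-\abf')^2=2d-12\geq -2$ is the binding constraint giving $d\geq 5$. The extra observations (even rank of all lattice classes, and testing the pairing against $\vbf'-\abf'$ since $(\abf',\vbf')>\frac{\vbf'^2}{2}$) are accurate refinements of what the paper leaves implicit.
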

\begin{proof}
We adapt the notation in the the proof of Proposition~\ref{LargestWallM}. Suppose $x=0$ intersects a rank two wall between $W_-'$ and $\Wcal'$ at $(0,y)$. Then $\frac{1}{d}<y^2<\frac{3}{2d}$. Now
\begin{align*}
    2-\half<c=ady^2-\half<2\cdot\frac{3}{2}-\half.
\end{align*}
Thus $c=2$ and the wall is given by $\abf'=(2,1,2)$.

Noting that $0<(\vbf'-\abf',\vbf')=4d-2<\frac{\vbf'^2}{2}$, such a wall exists if and only if the following conditions are satisfied:
\begin{itemize}
    \item $\abf'^2=2d-8\geq-2$,
    \item $(\vbf'-\abf')^2=2d-12\geq -2$,
    \item $\abf'^2\cdot\vbf'^2-(\abf',\vbf')^2=-80d-4<0$.
\end{itemize}
It is clear that the above amount to $d\geq 5$.
\end{proof}

To conclude this subsection, we provide a fast algorithm for finding all the walls for $\vbf'$ with $\frac{4d}{8d+1}<\Gamma< \frac{1}{2}$. Our algorithm finds an element in the lattice associated to the wall that is not a multiple of $\vbf'$.
\begin{prop}\label{AlgM}
Let $\abf'=(a,b,c)$ be a primitive Mukai vector with $a\geq 1$. Then there exists a wall for $\vbf'$ whose lattice contains $\abf'$ so that its corresponding $\Gamma$ satisfies $\frac{4d}{8d+1}< \Gamma<\half$ if the following conditions are satisfied:
\begin{enumerate}
    \item $a<-\frac14+\sqrt{\frac{1}{16}+d+1}$,
    \item $b=1$,
    \item the integer $c$ satisfies $a-\half<c\leq \frac{d+1}{a}-1$.
\end{enumerate}
\end{prop}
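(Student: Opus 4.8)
The plan is to mirror the proof of Proposition~\ref{AlgS} on the $M(0,2,-1)$ side. Given a primitive $\abf'=(a,1,c)$ satisfying (1)--(3), I would produce a flopping wall for $\vbf'$ whose associated lattice contains $\abf'$ by verifying the numerical criterion of \cite[Theorem 5.7(b)]{BM14b} for the class $\bbf':=\vbf'-\abf'=(-a,1,-1-c)$, and then read off the value of $\Gamma$ exactly as in the proof of Proposition~\ref{RankBoundM}.

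First I would record the relevant arithmetic in the Mukai lattice. Using $\vbf'^2=8d$, a direct computation gives
\begin{align*}
    \abf'^2=2(d-ac),\qquad \bbf'^2=2\bigl(d-a(c+1)\bigr),\qquad (\abf',\vbf')=4d+a,\qquad (\bbf',\vbf')=4d-a,
\end{align*}
and $\abf'^2\vbf'^2-(\abf',\vbf')^2=-a(16cd+8d+a)$, which equals $\bbf'^2\vbf'^2-(\bbf',\vbf')^2$. Condition (3) forces $c\geq 1$ (since $c>a-\frac12\geq\frac12$), whence $ac\leq d+1-a\leq d$ and $a(c+1)\leq d+1$; therefore $\abf'^2\geq 0>-2$ and $\bbf'^2\geq -2$. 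Because $a,c,d\geq 1$, the discriminant $-a(16cd+8d+a)$ is negative, so $\langle\vbf',\abf'\rangle$ is a hyperbolic rank-two lattice, and $\abf'$ is not a multiple of $\vbf'$ since $a\geq 1$. Finally $a\leq ac\leq d$ gives $0<(\bbf',\vbf')=4d-a<4d=\frac{\vbf'^2}{2}$. Applying \cite[Theorem 5.7(b)]{BM14b} to $\bbf'$ then yields a flopping wall $W'$ for $\vbf'$ whose lattice contains $\bbf'$, hence also $\abf'=\vbf'-\bbf'$.

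It remains to pin down $\Gamma$. As in the proof of Proposition~\ref{RankBoundM}, $W'$ meets the line $x=0$ in the $xy$-plane, and along $W'$ the induced decomposition of $\vbf'$ is $\vbf'=\abf'+\bbf'$; the condition that $Z_{0,y}(\abf')$ and $Z_{0,y}(\vbf')$ be proportional gives the wall equation $c=ady^2-\frac12$, which together with the semicircle equation $y^2=\frac{1}{4d^2\left(\frac1\Gamma-2\right)}$ from Section~\ref{pre-hilb} yields
\begin{align*}
    \Gamma=\frac{2d(2c+1)}{4d(2c+1)+a}.
\end{align*}
Since $a\geq 1$ this is $<\frac12$, and clearing denominators shows that $\Gamma>\frac{4d}{8d+1}$ is equivalent to $2c+1>2a$, i.e.\ the lower bound in (3). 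I would close by observing that (1) is not an independent hypothesis: eliminating $c$ between the two inequalities of (3) gives $a^2+\frac a2<d+1$, which is precisely (1).

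The underlying computations are routine. The only point that genuinely demands care is the assertion that the wall $W'$ furnished by \cite[Theorem 5.7(b)]{BM14b} is the one whose associated lattice is $\langle\vbf',\abf'\rangle$ and whose crossing along $x=0$ realizes the decomposition $\vbf'=\abf'+\bbf'$, so that the formula for $\Gamma$ above applies verbatim --- but this is handled in exactly the same way as on the $\hilb{4d+1}$ side in the proofs of Propositions~\ref{LargestWallM} and \ref{RankBoundM}, so I expect no new difficulty beyond bookkeeping of conventions.
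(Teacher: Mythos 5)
Your proposal is correct and follows essentially the same route as the paper: verify the same lattice inequalities for $\abf'$ and $\vbf'-\abf'$, invoke \cite[Theorem 5.7(b)]{BM14b}, and obtain the range of $\Gamma$ from the intersection with $x=0$ as in the proof of Proposition~\ref{RankBoundM}. Your explicit formula $\Gamma=\frac{2d(2c+1)}{4d(2c+1)+a}$ just makes quantitative what the paper leaves as a reference to that proof, and your observation that (1) follows from (3) matches the paper's remark.
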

\begin{rem}
The three conditions come from the proof of Proposition~\ref{RankBoundM} with $\gamma'=\frac{4d}{8d+1}$. Any $\abf'$ that realizes the rank of such a wall will satisfy these conditions. This guarantees that our algorithm will not miss any wall with $\frac{4d}{8d+1}< \Gamma<\half$.
\end{rem}
\begin{proof}
By conditions (2) and (3) we have
\begin{align*}
    \abf'^2&=2(d-ac)\geq -2+2a> -2, \\
    (\vbf'-\abf')^2&=2(d-a(1+c))\geq -2, \\
    \abf'^2\cdot\vbf'^2-(\abf',\vbf')^2&=-16acd-8ad-a^2<0, \\
    0<(\vbf'-\abf',\vbf')&=4d-a<\frac{\vbf'^2}{2}.
\end{align*}
By \cite[Theorem 5.7(b)]{BM14b}, there is a flopping wall for $\vbf'$ whose lattice contains $\abf'$. The bound for $\Gamma$ follows from the condition $a-\half<c$ and the proof of Proposition~\ref{RankBoundM}.
\end{proof}
\begin{rem}
Combining Propositions~\ref{AlgS} and \ref{AlgM}, one can find all walls in $\Mov(\hilb{4d+1})$. Moreover, it is easy to see that the number of walls in $\Mov(\hilb{4d+1})$ is $O(d^\frac{3}{2})$.
\end{rem}

\section{Exceptional loci of rank one walls}\label{SectExc}
In this section we study the exceptional loci of the rank one walls in the case when $n=2$. 
\subsection{The $\hilb{4d+1}$ side}
For a subscheme $Z\subset S$, we use $\supp(Z)$ to denote the set-theoretic support of $Z$. For a coherent sheaf $F$ on $S$, we use $\Supp(F)$ to denote the Fitting support of $F$.
\begin{defn}
    Let $\xi\subset S$ be a zero-dimensional subscheme. A closed subscheme $\xi'\subset \xi$ is \it{saturated} if $\Supp(\Ocal_\xi/\Ocal_{\xi'})\cap \supp(\xi')=\emptyset$.
\end{defn}
The following notations and results related to saturated subschemes will also be needed.
\begin{lemdef}
Let $\xi$ be a zero-dimensional closed subscheme of $S$ and let $\xi'\subset \xi$ be a saturated subscheme. Then
\begin{enumerate}
    \item $\Ocal_\xi\cong\Ocal_{\xi'}\oplus(\Ocal_\xi/\Ocal_{\xi'})$. We will denote the subscheme of $\xi$ whose structure sheaf is the second summand by $\xi\backslash\xi'$.
    \item $\xi\backslash\xi'\subset \xi$ is a saturated subscheme.
    \item Suppose $\xi''\subset \xi$ is also saturated. We define $\xi'\cup\xi''$ to be the saturated subscheme of $\xi$ whose support is $\supp(\xi')\cup\supp(\xi'')$.
    \item For any subscheme $\zeta\subset \xi$, we define $\zeta\backslash\xi'$ to be the maximal subscheme of $\zeta$ whose support does not intersect $\supp(\xi')$.
\end{enumerate}
\end{lemdef}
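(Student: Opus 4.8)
The plan is to reduce the entire statement to the elementary fact that a zero-dimensional scheme is the disjoint union of its connected components. Since $\xi$ is zero-dimensional and closed in $S$, the sheaf of algebras $\Ocal_\xi$ is a skyscraper whose stalk is an Artinian ring, hence splits canonically as a finite product $\Ocal_\xi\cong\bigoplus_{p\in\supp(\xi)}\Ocal_{\xi,p}$ of local Artinian algebras indexed by the points of $\supp(\xi)$; equivalently $\xi=\bigsqcup_{p\in\supp(\xi)}\xi_p$, where $\xi_p:=\Spec\Ocal_{\xi,p}$ is the connected component of $\xi$ supported at $p$. I will write $e_p\in\Ocal_\xi$ for the associated orthogonal idempotents.

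The first — and essentially only substantive — step is to identify the saturated subschemes. A closed subscheme $\xi'\subseteq\xi$ corresponds to an ideal of $\Ocal_\xi$, and since an ideal of a finite product of rings is the product of its components, one obtains $\xi'=\bigsqcup_p\xi'_p$ with $\xi'_p\subseteq\xi_p$, so that $\supp(\xi')=\{p:\xi'_p\neq\emptyset\}$ while $\Ocal_\xi/\Ocal_{\xi'}$ is supported on $\{p:\xi'_p\subsetneq\xi_p\}$. Hence $\Supp(\Ocal_\xi/\Ocal_{\xi'})\cap\supp(\xi')=\emptyset$ holds if and only if for every $p$ one has $\xi'_p=\emptyset$ or $\xi'_p=\xi_p$; in other words, $\xi'$ is saturated precisely when it is a union of connected components of $\xi$, i.e. $\xi'=\bigsqcup_{p\in T}\xi_p$ for $T:=\supp(\xi')$. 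In particular a saturated subscheme is determined by its support.

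Granting this, I would dispatch the four assertions as bookkeeping with the idempotents $e_p$. For (1), setting $e_T:=\sum_{p\in T}e_p$, the surjection $\Ocal_\xi\to\Ocal_{\xi'}$ identifies $\Ocal_{\xi'}$ with the direct summand $e_T\Ocal_\xi$ and $\Ocal_\xi/\Ocal_{\xi'}$ with $(1-e_T)\Ocal_\xi=\bigoplus_{p\notin T}\Ocal_{\xi,p}$, which is the structure sheaf of $\xi\backslash\xi':=\bigsqcup_{p\notin T}\xi_p$; this gives the splitting. Assertion (2) is immediate, since $\xi\backslash\xi'$ is again a union of components. For (3), if $\xi''=\bigsqcup_{p\in T''}\xi_p$ is also saturated, then the union of components indexed by $T\cup T''$ is a saturated subscheme whose support is $\supp(\xi')\cup\supp(\xi'')$, and it is the only such subscheme by the last sentence of the previous paragraph; this makes $\xi'\cup\xi''$ well defined, and it visibly contains $\xi'$ and $\xi''$. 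For (4), I would write an arbitrary $\zeta\subseteq\xi$ as $\zeta=\bigsqcup_p\zeta_p$ with $\zeta_p\subseteq\xi_p$ and set $\zeta\backslash\xi':=\bigsqcup_{p\notin T}\zeta_p$; it is a subscheme of $\zeta$ with support disjoint from $\supp(\xi')$, and any $\eta\subseteq\zeta$ with $\supp(\eta)\cap\supp(\xi')=\emptyset$ has $\eta_p=\emptyset$ for all $p\in T$, hence $\eta\subseteq\zeta\backslash\xi'$, which gives maximality.

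The one point I would be careful to write out cleanly is the translation in the second paragraph — namely that the auxiliary module $\Ocal_\xi/\Ocal_{\xi'}$ in the definition of ``saturated'' has support exactly at the points where $\xi'$ fails to contain the whole local component $\xi_p$, together with the observation that ideals of $\bigoplus_p\Ocal_{\xi,p}$ decompose along the product. I do not expect any genuine obstacle beyond that: once ``saturated $=$ union of connected components'' is in hand, parts (1)--(4) are formal, use only that $\xi$ is zero-dimensional, and make no use of the geometry of $S$.
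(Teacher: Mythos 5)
Your argument is correct: the paper states this Lemma/Definition without proof, treating it as routine, and your reduction --- decomposing $\Ocal_\xi$ into a product of local Artinian algebras and observing that $\xi'\subset\xi$ is saturated exactly when it is a union of connected components of $\xi$ (equivalently, at each point of $\supp(\xi)$ it is either empty or the whole punctual component), after which (1)--(4) are bookkeeping with orthogonal idempotents --- is precisely the standard verification the paper leaves implicit. The one point to write out carefully, as you yourself flag, is the reading of $\Ocal_\xi/\Ocal_{\xi'}$ as the kernel of $\Ocal_\xi\to\Ocal_{\xi'}$, which in the saturated case is generated by the complementary idempotent and is therefore again the structure sheaf of a closed subscheme, so the notation in (1) is consistent.
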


\noindent  For $1\leq k\leq d+1$, we define
\begin{align*}
    Y_k=\{\xi\in\hilb{4d+1}\;|\; \exists \mbox{ saturated }\zeta\subset\xi \mbox{ of length }3d+k \mbox{ and } C\in|H| \mbox{ such that } \zeta\subset C\}.
\end{align*}
Then $Y_k$ is a locally closed subset of $\hilb{4d+1}$ with closure
\begin{align*}
    \overline{Y_k}=\{\xi\in\hilb{4d+1}\;|\; \exists\mbox{ }\zeta\subset\xi \mbox{ of length }3d+k \mbox{ and } C\in|H| \mbox{ such that } \zeta\subset C\}.
\end{align*}
Note that $Y_{d+1}=\overline{Y_{d+1}}$ and $\overline{Y_{k+1}}\subset\overline{Y_{k}}$ for $1\leq k\leq d$.
\begin{prop}\label{WisP^nbundle}
With $Y_k$ defined as above, we have that
\begin{enumerate}
    \item $Y_{d+1}$ is a $\Pbb^{3d}$-bundle over $M(0,1,-5d-1)$,
    \item  $Y_k\backslash\overline{Y_{k+1}}$ is a $\Pbb^{2d+k-1}$-bundle over an open subset of $\hilb{d+1-k}\times M(0,1,-4d-k)$ for $1\leq k\leq d$.
\end{enumerate}
\end{prop}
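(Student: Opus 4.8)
The plan is to describe each $\xi \in Y_k$ (up to the choice of the saturated subscheme $\zeta$) by a pair: a curve $C \in |H|$ together with a rank-one torsion-free sheaf on $C$ supported at $\zeta$, and the ``residual'' points $\xi \backslash \zeta$ away from $C$. Concretely, for $\xi \in Y_{d+1}$ the saturated $\zeta$ has length $3d+1$, which is exactly $\ell$ with $\ell = \frac{(5d+1)+(d+1)}{2}$; the point is that $\Ocal_C(-\zeta)$ (twisted appropriately) should be a stable sheaf in $M(0,1,-5d-1)$, since on a curve $C \in |H|$ we have $H\cdot C = 2d$ and a length-$(3d+1)$ subscheme produces a line bundle of the correct degree and Euler characteristic. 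So first I would make this correspondence precise: send $\xi$ (with its unique length-$(3d+1)$ saturated $\zeta$ — uniqueness needs checking, or one works over the relevant incidence variety) to the sheaf $\Ocal_C(-\zeta)\otimes(\text{twist})$, and conversely, given a stable sheaf $\Fcal \in M(0,1,-5d-1)$ with (one-dimensional, integral or at least reduced-enough) support $C$, recover $\xi$ by choosing a length-$(3d+1)$ subscheme of $C$ cut out by a section — the fiber of this choice being the projectivization $\Pbb(H^0(C,\Fcal^{\vee}\otimes\cdots))$ or equivalently a linear system, which has dimension $3d$. This identifies $Y_{d+1}$ as the projectivization of a (twisted) sheaf on $M(0,1,-5d-1)$, hence a $\Pbb^{3d}$-bundle; the dimension count $\dim Y_{d+1} = 3d + \dim M(0,1,-5d-1) = 3d + (2\cdot 5d + 2) $ should be cross-checked against the expected codimension of the exceptional locus for the corresponding rank one wall.

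For part (2), the idea is the same but now $\zeta$ has length $3d+k \leq 4d$, so $\xi \backslash \zeta$ is a genuinely nonempty length-$(d+1-k)$ zero-dimensional subscheme, disjoint from $\supp(\zeta) \subset C$ by saturatedness. This gives a decomposition $\xi \leftrightarrow (\zeta \subset C, \ \xi\backslash\zeta)$ with $\xi\backslash\zeta \in \hilb{d+1-k}$ ranging over the open locus where its support avoids $C$, and $\zeta$ giving rise to a sheaf on $C$ with Euler characteristic matching $M(0,1,-4d-k)$ — again because a length-$(3d+k)$ subscheme of $C \in |H|$, $H^2 = 2d$, yields the right degree. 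The fiber over a point of $\hilb{d+1-k}\times M(0,1,-4d-k)$ is the choice of the length-$(3d+k)$ subscheme realizing the fixed sheaf, a linear system of dimension $2d+k-1$ (here the genus of $C$ is $d+1$, so $h^0$ of the relevant line bundle on $C$ is $(3d+k) - (d+1) + 1 = 2d+k$, giving a $\Pbb^{2d+k-1}$). So I would: (i) set up the incidence variety parametrizing $(\xi, \zeta, C)$ and show the forgetful map to $\hilb{4d+1}$ is an isomorphism over $Y_k \backslash \overline{Y_{k+1}}$ (this is where $Y_{k+1}$ must be removed — exactly to force $\zeta$ to have length \emph{exactly} $3d+k$ and $C$ to be essentially unique); (ii) construct the map to $\hilb{d+1-k}\times M(0,1,-4d-k)$; (iii) identify the fibers with the stated projective spaces via Riemann-Roch on $C$; (iv) invoke a base-change/flatness argument to upgrade ``fiberwise $\Pbb^N$'' to ``$\Pbb^N$-bundle'' (Zariski-locally trivial, or at least a projective bundle in the sense of a $\Pbb(\Ecal)$), which on a smooth base is automatic once the fiber dimension is constant and the total space is smooth, or can be gotten from the universal family of sheaves.

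The main obstacle, I expect, is controlling the geometry of the curve $C$ and the sheaf on it when $C$ is \emph{not} integral — $|H|$ on a Picard-rank-one K3 with $H^2 = 2d$ is base-point free and the general member is smooth of genus $d+1$, but reducible/non-reduced members do occur, and for those the ``line bundle $\Ocal_C(-\zeta)$'' must be replaced by a rank-one torsion-free sheaf, and stability on $M(0,1,\ast)$ has to be checked. The cleanest route is probably to restrict attention to the locus where everything is generic (which is all that's needed, since $Y_k$ and $\overline{Y_k}$ differ from their generic parts in smaller-dimensional loci, and the bundle statement is really about an open dense part), and then argue that the constructions extend by properness/flatness of the relevant Hilbert and moduli schemes; alternatively one can appeal directly to the structure of $M(0,1,-m)$ as a Beauville–Mukai system fibered over $|H|$, so that a point of $M(0,1,-m)$ \emph{is} precisely a pair $(C \in |H|, \text{rank-one sheaf on } C)$, and the combinatorics of ``which subscheme of $\xi$ lies on which curve'' becomes a statement about the compactified Jacobian fibration. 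A secondary technical point is the claim that $Y_k$ is locally closed with the asserted closure; this should follow from semicontinuity of $h^0(\Ical_\zeta(1))$ over the flag Hilbert scheme of pairs $\zeta \subset \xi$, together with properness of the map forgetting $\xi$, and I would prove it first as a lemma since parts (1) and (2) implicitly use the stratification $Y_k \backslash \overline{Y_{k+1}}$ being well-behaved.
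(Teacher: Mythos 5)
Your overall correspondence is the same as the paper's --- a point of the stratum is recorded by its residual points together with a rank-one torsion-free sheaf on a curve in $|H|$, and the fibers are projective spaces of dimensions $3d$ and $2d+k-1$ --- but the steps you leave open are exactly where the content lies, and your proposed way around them would not work. The difficulty you flag (non-integral members of $|H|$) does not occur: since $\Pic(S)=\Z\cdot H$, every curve in $|H|$ is integral. This is the key fact you are missing, and it does all the work in the paper: it gives uniqueness of the curve through $\xi$ in part (1) (two distinct members of $|H|$ meet in a zero-dimensional scheme of length $H^2=2d<4d+1$), uniqueness of the pair $(\zeta,C)$ in part (2) (a length count: $l(\zeta\cap\zeta')\geq 2(3d+k)-(4d+1)\geq 2d+1>2d$ forces $C=C'$, and then $\xi\notin\overline{Y_{k+1}}$ forces $\zeta=\zeta'$), and stability of the cokernel sheaf for free. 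Without this, your map to $\hilb{d+1-k}\times M(0,1,-4d-k)$ is not even well defined, and your fallback --- prove the statement on a generic locus and ``extend by properness/flatness'' --- is not a proof of the proposition, which asserts a bundle structure on all of $Y_{d+1}$ and of $Y_k\backslash\overline{Y_{k+1}}$; there is no general principle that extends a projective-bundle structure from a dense open subset.

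On the fibers, the paper avoids linear systems on $C$ entirely: it identifies the fiber with $\Pbb(\Ext^1(\Ecal,\Ocal_S(-1)))\cong\Pbb^{3d}$, respectively $\Pbb(\Ext^1(\Ecal,\Ical_\eta(-1)))\cong\Pbb^{2d+k-1}$, by showing that every non-split extension $0\to\Ocal_S(-1)\to I\to\Ecal\to 0$ (resp. $0\to\Ical_\eta(-1)\to I\to\Ecal\to 0$, with $\eta\cap\Supp(\Ecal)=\emptyset$ guaranteeing torsion-freeness) has Mukai vector $(1,0,-4d)$ and hence is an ideal sheaf $\Ical_\xi$ with $\xi$ in the correct stratum; the verification that the resulting $\xi$ avoids $\overline{Y_{k+1}}$ is a separate length-counting argument that your sketch omits. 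Working with $\Ext^1$ on the surface sidesteps the problems your $\Pbb(H^0(C,\Fcal^\vee\otimes\cdots))$ description would face for non-locally-free sheaves on singular integral curves, and it yields the bundle structure directly (constant $\ext^1$, projectivized relative Ext), rather than through your step (iv). Your route could in principle be completed (via $\Pbb(\Hom(\Ecal,\Ocal_C))$ and duality on integral Gorenstein curves, or the compactified-Jacobian picture you mention), but as written the hard points are deferred. Finally, a slip: for $\xi\in Y_{d+1}$ the saturated subscheme has length $3d+(d+1)=4d+1$, i.e. $\zeta=\xi\subset C$, not $3d+1$; your fiber dimension $3d$ matches the correct length, not the one you wrote.
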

\begin{proof}
(1) By definition, for any $\xi\in Y_{d+1}$ there exists a curve in $|H|$ containing $\xi$. We claim that such a curve is unique. Suppose $C_1$, $C_2\in |H|$, $C_1\neq C_2$, and $\xi\in C_1\cap C_2$. Since $C_1$ and $C_2$ are both integral, $C_1\cap C_2$ is a zero-dimensional subscheme with length $H^2=2d<4d+1$, hence unable to contain $\xi$. Thus there exists a unique (up to scalar) injection $\Ocal_S(-1)\to\Ical_\xi$, whose cokernel $\Ecal_\xi$ has Mukai vector $(0,1,-5d-1)$. Then $\Ecal_\xi$ is pure of dimension one by the Bogomolov inequality. Since all curves in $|H|$ are integral, $\Ecal_\xi$ is stable. As a result, we have a morphism
\begin{eqnarray*}
\psi_{d+1}:Y_{d+1} & \to & M(0,1,-5d-1), \\
\xi & \mapsto & \Ecal_\xi.
\end{eqnarray*}
It remains to show that $\psi_{d+1}$ is surjective and that its fiber over each point in $M(0,1,-5d-1)$ is isomorphic to $\Pbb^{3d}$. Let $\Ecal\in M(0,1,-5d-1)$; then $\Ext^1(\Ecal,\Ocal_S(-1))=\Cbb^{3d+1}$. Moreover, any non-split extension 
\begin{align*}
    0\to\Ocal_S(-1)\to I\to \Ecal\to 0
\end{align*}
yields a torsion-free sheaf $I$ with Mukai vector $(1,0,-4d)$. Hence $I\cong\Ical_{\xi}$ for some $\xi\in Y_{d+1}$. Thus $\psi_{d+1}$ is surjective and $\psi_{d+1}^{-1}(\Ecal)\cong\Pbb(\Ext^1(\Ecal,\Ocal_S(-1)))\cong\Pbb^{3d}$ for $\Ecal\in M(0,1,-5d-1)$.\\

(2) Let $U_k\subset \hilb{d+1-k}\times M(0,1,-4d-k)$ be the open subset parametrizing pairs $(\eta,\Ecal)$ satisfying $\eta\cap \Supp(\Ecal)=\emptyset$. We will show that $Y_k\backslash\overline{Y_{k+1}}$ is a $\Pbb^{2d+k-1}$-bundle over $U_k$. For $\xi\in Y_k\backslash\overline{Y_{k+1}}$, there exists a saturated subscheme $\zeta\subset\xi$ with length $3d+k$ so that $\zeta\subset C$ for some $C\in|H|$. We claim that both $\zeta$ and $C$ are unique. Suppose that another saturated $\zeta'\subset \xi$ has length $3d+k$ and $\zeta'\subset C'$ for some $C'\in |H|$. Then
$$l(\zeta\cap\zeta')\geq 2(3d+k)-(4d+1)=2d+2k-1\geq 2d+1.$$
If $C\neq C'$, then $C\cap C'$ is a zero-dimensional subscheme with length $2d$, which is impossible since $\zeta\cap\zeta'\subset C\cap C'$. Thus $C=C'$. Now $\zeta\backslash\zeta'\subset\zeta\subset C$. If $\zeta\backslash\zeta'\neq\emptyset$, then $\zeta'\cup (\zeta\backslash\zeta')\subset C$, contradicting our assumption that $\xi\notin\overline{Y_{k+1}}$. Hence $\zeta\backslash\zeta'=\emptyset$, and therefore $\zeta=\zeta'$ as they have the same length. This proves the claim. We obtain a well-defined morphism
\begin{eqnarray*}
    \psi_k: Y_k\backslash\overline{Y_{k+1}} & \to & \hilb{d+1-k}\times M(0,1,-4d-k), \\
    \xi & \mapsto & (\xi\backslash\zeta,\Ical_{\zeta/C}).
\end{eqnarray*}
It is clear that the image of $\psi_k$ is contained in $U_k$. We now show that $\psi_k$ maps onto $U_k$ and the fiber over any point in $U_k$ is isomorphic to $\Pbb^{2d+k-1}$. For $(\eta,\Ecal)\in U_k$, it is easy to check that $\Ext^1(\Ecal,\Ical_\eta(-1))=\C^{2d+k}$. For any non-split extension
\begin{align*}
    0\to \Ical_\eta(-1)\to I\to \Ecal\to 0,
\end{align*}
$I$ must be torsion-free since $\eta\cap \Supp(\Ecal)=\emptyset$. Since $v(I)=(1,0,-4d)$, $I$ is isomorphic to the ideal sheaf of a length $4d+1$ subscheme $\xi$. Moreover, $\eta$ is a saturated subscheme of $\xi$ and $\xi\backslash\eta\subset \Supp(\Ecal)$; thus $\xi\in Y_k$. Suppose $\xi\in \overline{Y_{k+1}}$. Then there exists a (not necessarily saturated) subscheme $\delta\subset\xi$ of length $3d+k+1$ so that $\delta$ is contained in some curve in $|H|$. Now $\delta\backslash\eta$ is a closed subscheme of $\xi\backslash\eta\subset \Supp(\Ecal)$ and
$$l(\delta\backslash\eta)\geq (3d+k+1)-(d+1-k)=2d+2k.$$
As before, any subscheme of length at least $2d+1$ is contained in at most one curve in $|H|$. Hence $\delta\subset\Supp(\Ecal)$. Since $l(\xi\backslash\eta)=3d+k<3d+k+1=l(\delta)$, we must have $\eta\cap\delta\neq \emptyset$, which contradicts our assumption that $\eta\cap\Supp(\Ecal)=\emptyset$. As a result, $\xi\in Y_k\backslash\overline{Y_{k+1}}$. It follows that $\psi_k$ maps onto $U_k$ and the fiber of $\psi_k$ over $(\eta,\Ecal)\in U_k$ is $\Pbb(\Ext^1(\Ecal,\Ical_\eta(-1))\cong\Pbb^{2d+k-1}$.
\end{proof}

Next we show that the proper transforms of the $Y_k$'s are the exceptional loci of the rank one walls for $\vbf$. We will need the following lemma.
\begin{lem}\label{StayGieseker}
The path of stability conditions $\sigma_t:=\sigma_{-2,t}$, where $t>\frac{1}{\sqrt{d}}$, does not cross any wall for a Mukai vector of the form $(A,1-2A,B)$, where $A, B\in \Z$.
\end{lem}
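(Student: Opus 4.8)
The plan is to argue by contradiction, mimicking the numerical analysis in the proof of Proposition~\ref{LargestWallS}. Suppose that along the path $\sigma_t=\sigma_{-2,t}$, with $t>\frac{1}{\sqrt d}$, there is a wall for a Mukai vector $\wbf=(A,1-2A,B)$. By the structure of walls in the $xy$-plane, this wall is a semicircle meeting the vertical line $x=-2$; say it meets it at $(-2,y)$ with $y>\frac{1}{\sqrt d}$. On the wall $\wbf$ admits a non-trivial decomposition $\wbf=\wbf_1+\wbf_2$ into Mukai vectors of $\sigma_{-2,y}$-semistable objects with the same $\sigma_{-2,y}$-phase; write $\wbf_j=(a_j,b_j,c_j)$. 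Since these are classes of objects in $\Coh^{-2}(S)$, we get $0\le b_j+2a_j$ for each $j$, hence $b_1+b_2+2(a_1+a_2)\ge 0$, i.e.\ (using $A=a_1+a_2$, $1-2A=b_1+b_2$) this is automatically satisfied; more importantly, equality of arguments of the central charges forces the strict inequalities $b_j+2a_j>0$. Thus $b_1+2a_1$ and $b_2+2a_2$ are positive integers summing to $(1-2A)+2A=1$, which is impossible.

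First I would record the central charges explicitly: with $x=-2$ one has $Z_{-2,y}(r,s,u) = \bigl(-4ds - u + dr(y^2-4)\bigr) + \sqrt{-1}\,\bigl(2dy(s+2r)\bigr)$, so $\Im Z_{-2,y}(r,s,u) = 2dy(s+2r)$. For $\wbf=(A,1-2A,B)$ itself we have $\Im Z_{-2,y}(\wbf) = 2dy\bigl((1-2A)+2A\bigr) = 2dy>0$. Then I would invoke Lemma~\ref{isStab} to ensure $\sigma_{-2,y}$ is genuinely a stability condition when $y>\frac{1}{\sqrt d}$ (so that the phase/argument comparison is legitimate and all the standard structure applies). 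On a wall, the destabilizing subobject and quotient have central charges on the same ray through the origin as $Z(\wbf)$; since $\Im Z(\wbf)>0$, both $\Im Z(\wbf_1)$ and $\Im Z(\wbf_2)$ must be $\ge 0$, and in fact $>0$ because a non-trivial summand with zero imaginary part would have real part of the wrong sign to align with $Z(\wbf)$ (this is exactly the step used in the proof of Proposition~\ref{LargestWallS}, where $a+b>0$ and $1-a-b>0$ were deduced). This yields $b_1+2a_1\ge 1$ and $b_2+2a_2\ge 1$, whence $(b_1+2a_1)+(b_2+2a_2)\ge 2$, contradicting $(b_1+b_2)+2(a_1+a_2) = (1-2A)+2A = 1$.

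The one point requiring a little care — and the main (though mild) obstacle — is justifying that \emph{both} imaginary parts are strictly positive rather than merely nonnegative: a priori one of $\wbf_1,\wbf_2$ could have $\Im Z_{-2,y}=0$, corresponding to an object in the subcategory of phase-$1$ objects (a shift of a slope-semistable sheaf of the appropriate slope). In that case its central charge is real and, for the arguments to match that of $Z(\wbf)$ which lies in the open upper half plane, it would have to be a non-positive real, i.e.\ lie on the ray $\R_{\le 0}$; but then the complementary summand would have imaginary part equal to $\Im Z(\wbf)>0$ and real part making its argument strictly less than that of $Z(\wbf)$ — contradicting the equality of phases. Spelling this dichotomy out carefully (exactly as in the final lines of the proof of Theorem~\ref{BM1} and Proposition~\ref{LargestWallS}) closes the gap. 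Note the argument uses only the congruence $b_1+b_2+2(a_1+a_2)=1$ modulo the positivity constraints, so it is insensitive to $d$ and to the precise value of $B$, which is why the statement holds for the whole family $(A,1-2A,B)$ at once.
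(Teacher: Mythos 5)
Your proposal is correct and follows essentially the same route as the paper: decompose $(A,1-2A,B)$ at the intersection of the putative wall with $x=-2$, use that both summands are classes of objects in $\Coh^{-2}(S)$ sharing the phase of the original class (whose central charge has imaginary part $2dy>0$) to force both quantities $b_j+2a_j$ to be strictly positive integers, and contradict the fact that they sum to $1$. The careful handling of the possible phase-one summand is exactly the step the paper delegates to the argument of Proposition~\ref{LargestWallS}, so nothing further is needed.
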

\begin{proof}
Suppose $\sigma_t$ intersects a wall for $(A,1-2A,B)$ at $t=t^\dagger>\frac{1}{\sqrt{d}}$. Such a wall induces a decomposition of Mukai vectors
\begin{align*}
    (A,1-2A,B)=(a,b,c)+(A-a,1-2A-b,B-c),
\end{align*}
where $a,b,c\in \Z$ and the two Mukai vectors on the right are those of objects in $\Coh^{-2}(S)$. As in the proof of Proposition~\ref{LargestWallS}, we get $0<b+2a<1$, which leads to a contradiction.
\end{proof}

\begin{prop}\label{CrossRankOneS}
Suppose $d\leq 6$. One obtains $d+1$ birational models of $\hilb{4d+1}$ by crossing the rank one walls for $\vbf$. These models are connected by a chain of flops
\[ \xymatrix@R-1pc@C-2pc{
& \Bl_{Y_{d+1}}\hilb{4d+1} \ar[dr]\ar[dl] && \Bl_{\tilde{Y}_d}X_{d+1}\ar[dr]\ar[dl] && \Bl_{\tilde{Y}_{k}}X_k\ar[dr]\ar[dl] &&  \Bl_{\tilde{Y_1}}X_2\ar[dr]\ar[dl]\\
\hilb{4d+1} \ar@{-->}[rr]^{f_{d+1}} && X_{d+1} \ar@{-->}[rr]^{f_d} && X_d \cdots\cdot X_{k+1}\ar@{-->}[rr]^{f_{k}} && X_{k}\cdots\cdot X_2\ar@{-->}[rr]^{f_1} &&X_1,} \]
where $\widetilde{\bullet}$ denotes the strict transform of the set $\bullet$ under the appropriate birational map.
\end{prop}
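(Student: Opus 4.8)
The plan is to run the wall-crossing along the vertical path $\sigma_t = \sigma_{-2,t}$, which by Lemma~\ref{SnoTSS} avoids all totally semistable walls for $\vbf$, and by Lemma~\ref{StayGieseker} never crosses an auxiliary wall for any of the Mukai vectors of the sub/quotient objects that appear in our short exact sequences (these all have the form $(A,1-2A,B)$). By Corollary~\ref{RankOne}, for $d \leq 6$ every wall for $\vbf$ with $\Gamma \leq \frac{2d}{4d+1}$ is a rank one wall, and by the classification in Section~\ref{Rank1Wall} (with $n=2$, $i=1$) these are exactly the walls given by $\abf = (1,-1,k)$ for $1 \leq k \leq d+1$, with corresponding $\Gamma = \frac{2d}{4d+k}$. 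So there are precisely $d+1$ such walls; I will label them $W_{d+1}, W_d, \dots, W_1$ in decreasing order of $\Gamma$ (increasing $k$), so that $W_{d+1}$ is the Brill--Noether wall and $W_1 = W_-$ is the smallest. Crossing them in turn along $\sigma_t$ (from large $t$ downward) produces moduli spaces $\hilb{4d+1} = X_{d+1}^{+}, X_{d+1}, X_d, \dots, X_1$, where $X_{k}$ is $M_{\sigma_t}(\vbf)$ for $t$ just below $W_k$; each crossing is a flop $f_k \colon X_{k+1} \dashrightarrow X_k$ since all these walls are flopping walls and, being non-totally-semistable, the two sides share a common open locus of objects stable across the wall.

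Next I would identify the exceptional (flopped) locus of $f_k$ on the $X_{k+1}$ side with the strict transform $\tilde Y_k$ of $Y_k$. The key computation is the standard one for rank one walls: for $\sigma_t$ just above $W_k$, an object $I$ with $\vbf(I) = \vbf$ becomes strictly semistable exactly when it sits in an extension $0 \to \abf\text{-object} \to I \to \bbf\text{-object} \to 0$ realizing the wall; here $\abf = (1,-1,k)$ and $\bbf = \vbf - \abf = (0,1,-4d-k)$, so the destabilizing subobject is $\Ocal_S(-1)$ (up to twist/shift, the unique stable object of class $\abf$ once one accounts for the fact that $\abf^2 = 2d - 2k \geq -2$ forces it to be either $\Ocal_S(-1)$ or, at the extremes, an ideal-sheaf twist) and the quotient is a stable one-dimensional sheaf $\Ecal$ with $\vbf(\Ecal) = (0,1,-4d-k)$. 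For the Brill--Noether wall $k = d+1$ this recovers the description $Y_{d+1}$ is a $\Pbb^{3d}$-bundle over $M(0,1,-5d-1)$ from Proposition~\ref{WisP^nbundle}(1); for $1 \leq k \leq d$ the subobject $\Ocal_S(-1)$ must be "decorated" by the residual points $\eta = \xi\setminus\zeta$ (i.e.\ replaced by $\Ical_\eta(-1)$), giving the $\Pbb^{2d+k-1}$-bundle structure over $U_k \subset \hilb{d+1-k}\times M(0,1,-4d-k)$ of Proposition~\ref{WisP^nbundle}(2). Matching the $\Ext$-group dimensions $\ext^1(\Ecal,\Ocal_S(-1)) = 3d+1$ and $\ext^1(\Ecal,\Ical_\eta(-1)) = 2d+k$ computed there with the dimensions of the projective-bundle fibers on the two sides of the wall shows the crossing contracts the $\Pbb$-bundle over each point in one direction and re-expands in the dual direction — i.e.\ it is exactly the flop blowing up $\tilde Y_k$, with exceptional divisor $\Bl_{\tilde Y_k} X_{k+1}$ as in the diagram. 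Because these loci $Y_k$ are mutually disjoint after passing to strict transforms (the nesting $\overline{Y_{k+1}} \subset \overline{Y_k}$ means $Y_k \setminus \overline{Y_{k+1}}$ is what is flopped at stage $k$, once the deeper strata have already been modified), the flops occur independently and can be arranged in the stated chain.

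The main obstacle I expect is \emph{rigorously pinning down the stable objects of class $\abf=(1,-1,k)$ and $\bbf$ at each wall}, and in particular checking that the destabilizing sequences are genuinely of the form coming from $\Ocal_S(-1)$ (or $\Ical_\eta(-1)$) and a stable one-dimensional sheaf — this is where one must invoke integrality of curves in $|H|$, the Bogomolov inequality to force purity, and the hyperbolic-lattice analysis to rule out further sub-decompositions — together with the bookkeeping that the birational models produced for distinct $k$ are genuinely non-isomorphic (so the chain has exactly $d+1$ steps and no two $X_k$ coincide), which follows from comparing the classes $\tilde H - \frac{2d}{4d+k} B \in \NS$ cutting out each wall. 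Everything else is a matter of assembling Corollary~\ref{RankOne}, Lemmas~\ref{SnoTSS} and \ref{StayGieseker}, and Proposition~\ref{WisP^nbundle}, together with the general structure theory of flopping walls from \cite[Section 5]{BM14b} and the movable-cone chamber decomposition reviewed in Section~\ref{pre-bm}.
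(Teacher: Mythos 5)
Your proposal is correct and follows essentially the same route as the paper's proof: wall-crossing along $\sigma_{-2,t}$, using Lemma~\ref{SnoTSS} to rule out totally semistable walls, Corollary~\ref{RankOne} to see that only the $d+1$ walls given by $(1,-1,k)$, $1\leq k\leq d+1$, are crossed, Lemma~\ref{StayGieseker} to identify the destabilizing sub/quotient objects ($\Ocal_S(-1)$ resp.\ $\Ical_\eta(-1)$ with $\eta\in\hilb{d+1-k}$, and a stable one-dimensional sheaf), and Proposition~\ref{WisP^nbundle} to identify the exceptional loci with the strict transforms of the $Y_k$. One correction: the first wall crossed, given by the spherical class $(1,-1,d+1)$, is \emph{not} the Brill--Noether wall — the Brill--Noether (middle) wall is the one associated to $(-1,2,-4d-1)$ with $\Gamma_0=\frac{4d}{8d+1}$, which lies beyond $W_-=W_1$ and is not crossed in this chain; it is treated separately in Sections 7.3 and 8.
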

\begin{proof}
We consider wall-crossing along the path of stability conditions $\sigma_t:=\sigma_{-2,t}$, where $\sqrt{\frac{1}{d}}<t<+\infty$. By Lemma~\ref{SnoTSS}, there are no totally semistable walls for $\vbf$ on this path.  By Proposition~\ref{LargestWallS}, $M_{\sigma_t}(\vbf)=\hilb{4d+1}$ for $t>\sqrt{\frac{2d+2}{d}}$. By Corollary~\ref{RankOne}, $\sigma_t$ will intersect only rank one walls, given by $(1,-1,k)$ with $1\leq k\leq d+1$ in decreasing order in $k$, as $t$ goes from $+\infty$ to $\sqrt{\frac{2}{d}}-\delta$ for any small $\delta>0$. We will use $f_k$ to denote the flop induced by crossing the wall given by $(1,-1,k)$ and $X_k$ to denote the birational model obtained from $f_k$ which corresponds to the smaller value of $t$. 

The first wall $\sigma_t$ crosses as $t$ decreases is at $t_{d+1}=\sqrt{\frac{2d+2}{d}}$, inducing the (unique) decomposition of Mukai vectors
\begin{align*}
    (1,0,-4d)=(1,-1,d+1)+(0,1,-5d-1).
\end{align*}
By Lemma~\ref{StayGieseker}, near $t=t_{d+1}$, we have $M_{\sigma_t}(0,1,-5d-1)= M_H(0,1,-5d-1)$ and $M_{\sigma_t}(1,-1,d+1)$ is a single point corresponding to $\Ocal_S(-1)$. Hence an ideal sheaf $\Ical_\xi$ is in the exceptional locus of $f_{d+1}$ in $\hilb{4d+1}$ if and only if it fits into a (non-split) short exact sequence
\begin{align*}
    0\to\Ocal_S(-1)\to\Ical_\xi\to\Ecal\to 0,
\end{align*}
where $\Ecal\in M_H(0,1,-5d-1)$. This is equivalent to $\xi\in Y_{d+1}$. Thus $f_{d+1}$ is the flop of $\hilb{4d+1}$ at $Y_{d+1}$.

For $1\leq k\leq d$, $\sigma_t$ intersections the wall given by $(1,-1,k)$ at $t_k=\sqrt{\frac{2k}{d}}$. The wall induces the (unique) decomposition of Mukai vectors
\begin{align*}
    (1,0,-4d)=(1,-1,k)+(0,1,-4d-k).
\end{align*}
By Lemma~\ref{StayGieseker}, near $t=t_k$ we have $M_{\sigma_k}(1,-1,k)=M_H(1,-1,k)\cong\hilb{d+1-k}$ and we have $M_{\sigma_t}(0,1,-4d-k)=M_H(0,1,-4d-k)$. Hence an ideal sheaf $\Ical_\xi$ is in the exceptional locus of $f_k$ if and only if $\xi\notin\overline{Y_{k+1}}$ (so that $\Ical_\xi\in \hilb{4d+1}\cap X_{k+1})$ and it fits into a (non-split) short exact sequence
\begin{align*}
    0\to\Ical_{\zeta_k}(-1)\to\Ical_\xi\to\Ecal_k\to 0
\end{align*}
where $\zeta_k\in \hilb{d+1-k}$ and $\Ecal_k\in M_H(0,1,-4d-k)$. This is equivalent to $\xi\in\overline{Y_k}\backslash\overline{Y_{k+1}}$. Since the exceptional locus $E_k$ of $f_k$ in $X_{k+1}$ is irreducible (by the decomposition of Mukai vectors) and $\hilb{4d+1}\cap X_{k+1}$ is open in $X_{k+1}$, $E_k$ is the strict transform of $Y_k$. 
\end{proof}

\subsection{The $M(0,2,-1)$ side}
Let $B=|2H|$ and let $B^\circ\subset B$ be the locus of smooth curves. Let $M(0,2,-1)^\circ$ be the preimage of $B^\circ$ for the support map $M(0,2,-1)\to B$. For $1\leq k\leq d$ we define
\begin{align*}
    \Ycal_k^\circ=\left\{\Lcal\in M(0,2,-1)^\circ\;\left| \begin{array}{l}
    \Lcal(-1)\cong\Ocal_C(-\zeta_{d+1-k}+\eta_{d-k}) \mbox{ for some }C\in B^\circ,\\
    \zeta_{d+1-k}\mbox{ and }\eta_{d-k}\subset C \mbox{ have lengths equal to their indices}
    \end{array}\right.\right\}.
\end{align*}
Let $\Ycal_k=\overline{\Ycal_k^\circ}\subset M(0,2,-1)$.
\begin{prop}\label{CrossRankOneM}
Suppose $d\leq 6$. One obtains $d$ birational models of $M(0,2,-1)$ by crossing the rank one walls for $\vbf'$. These models are connected by a chain of flops
\[ \xymatrix@R-1pc@C-2pc{
& \Bl_{\Ycal_{d}}M(0,2,-1) \ar[dr]\ar[dl] && \Bl_{\tilde{\Ycal}_{d-1}}\Xcal_{d}\ar[dr]\ar[dl] && \Bl_{\tilde{\Ycal}_{k}}\Xcal_{k+1}\ar[dr]\ar[dl] &&  \Bl_{\tilde{\Ycal_1}}\Xcal_2\ar[dr]\ar[dl]\\
M(0,2,-1) \ar@{-->}[rr]^{g_{d}} && \Xcal_{d} \ar@{-->}[rr]^{g_{d-1}} && \Xcal_{d-1} \cdots\cdot \Xcal_{k+1}\ar@{-->}[rr]^{g_{k}} && \Xcal_{k}\cdots\cdot \Xcal_2\ar@{-->}[rr]^{g_1} &&\Xcal_1,} \]
where $\widetilde{\bullet}$ denotes the strict transform of the set $\bullet$ under the appropriate birational map.
\end{prop}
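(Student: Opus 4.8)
The plan is to run the argument of Proposition~\ref{CrossRankOneS} on the Beauville--Mukai side: cross the rank one walls for $\vbf'$ one at a time along a suitable path and identify the exceptional locus of each resulting flop with (a strict transform of) $\Ycal_k$.

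First, fix a small irrational $\epsilon>0$, so that $\sigma_{-\epsilon,t}$ is a stability condition for every $t>0$ by Remark~\ref{RestrictionStab}, and follow the path $\sigma_t:=\sigma_{-\epsilon,t}$ as $t$ decreases from $+\infty$ to just above the Brill--Noether wall $\Wcal'$. For $\epsilon$ small enough, Lemma~\ref{MnoTSS} shows this path meets no totally semistable, hence no fake, wall for $\vbf'$; in particular, across every wall it crosses the two adjacent models share a dense open subset of commonly stable objects. By Bridgeland's large-volume comparison together with Proposition~\ref{LargestWallM}, $M_{\sigma_t}(\vbf')=M_H(0,2,-1)=M(0,2,-1)$ for all $t$ above the largest wall, the one given by $(1,1,d)$; and by Corollary~\ref{RankOneM} (here $d\leq 6$ is used), between this wall and $\Wcal'$ the path meets exactly the rank one walls $(1,1,k)$ for $k=d,d-1,\dots,1$, crossed in this order since $\Gamma=\frac{2d(2k+1)}{4d(2k+1)+1}$ increases with $k$. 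Writing $g_k$ for the flop induced by crossing the wall $(1,1,k)$ and $\Xcal_k$ for the model on its smaller-$t$ side (and setting $\Xcal_{d+1}:=M(0,2,-1)$) produces the chain of $d$ flops $g_d,\dots,g_1$ and the stated birational models.

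It remains to identify the exceptional locus of each $g_k$ inside $\Xcal_{k+1}$. The wall $(1,1,k)$ carries the Jordan--H\"older decomposition $\vbf'=(1,1,k)+(-1,1,-1-k)$ at its intersection with $x=0$, so a point of $\Xcal_{k+1}$ lies in this exceptional locus precisely when it is a nonsplit extension, in the heart $\Coh^{-\epsilon}(S)$, of the $(-1,1,-1-k)$-factor by the $(1,1,k)$-factor; passing to cohomology sheaves turns such an extension into a short exact sequence of coherent sheaves on $S$ whose terms have Mukai vectors $(1,1,k)$, $(-1,1,-1-k)$ (shifted by one), and $\vbf'$. To make this concrete I would first establish the analogue of Lemma~\ref{StayGieseker} along $\sigma_{-\epsilon,t}$, identifying the moduli spaces of the two factors near the wall; unlike on the $\hilb{4d+1}$ side, the path $\sigma_{-\epsilon,t}$ does cross auxiliary walls for the rank-one Mukai vectors $(1,1,k)$ and $(-1,1,-1-k)$, so these moduli are certain birational models of $\hilb{d+1-k}$ and of $\hilb{d-k}$ rather than the Gieseker moduli $M_H(1,1,k)\cong\hilb{d+1-k}$ and $M_H(1,-1,1+k)[1]\cong\hilb{d-k}[1]$ themselves. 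Having pinned these down, I would restrict the short exact sequence to a generic smooth curve $C\in B^\circ$, a fiber of the Lagrangian fibration, where it becomes an honest sequence of line bundles and length-$(d+1-k)$, length-$(d-k)$ torsion sheaves supported on $C$; this yields $\Lcal(-1)\cong\Ocal_C(-\zeta_{d+1-k}+\eta_{d-k})$, i.e.\ $\Lcal\in\Ycal_k^\circ$, and the analogue of Proposition~\ref{WisP^nbundle} then exhibits $\Ycal_k$ as a projective bundle. Since the decomposition $\vbf'=(1,1,k)+(-1,1,-1-k)$ is irreducible the exceptional locus is irreducible, and since $M(0,2,-1)^\circ$ is open in $M(0,2,-1)$ it is the closure $\Ycal_k=\overline{\Ycal_k^\circ}$; transporting along $M(0,2,-1)\dashrightarrow\Xcal_{k+1}$ gives the strict transform $\tilde{\Ycal}_k$ as required. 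The case $k=d$ is the base of this process: there the $(-1,1,-1-k)$-factor is the spherical object $\Ocal_S(-1)[1]$, the $(1,1,d)$-factor moduli is genuinely $S\cong\hilb{1}$, and the sequence reads $0\to\Ocal_S(-1)\to\Ical_{\zeta_1}(1)\to\Lcal\to 0$, giving $\Lcal(-1)\cong\Ocal_C(-\zeta_1)$ directly.

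The main obstacle is the factor-moduli identification just described: on the Beauville--Mukai side the Jordan--H\"older factors are not Gieseker-stable near the wall, so one must track the wall-crossing of the rank-one Mukai vectors $(1,1,k)$ and $(-1,1,-1-k)$ along $\sigma_{-\epsilon,t}$ and determine precisely which birational models of the relevant Hilbert schemes carry the destabilizing objects at the wall. Once that is done, restricting the destabilizing sequence to a generic fiber of the Lagrangian fibration and matching the lengths $d+1-k$ and $d-k$ with the definition of $\Ycal_k$ is a direct computation, entirely parallel to the treatment of the Brill--Noether wall in \cite[Sections 6--8]{Bay18} and of the $\hilb{4d+1}$ side in Proposition~\ref{CrossRankOneS}; the duality of Proposition~\ref{dual} between $\sigma_{-\epsilon,t}$ and $\sigma_{\epsilon,t}$ serves throughout as a convenient device for transporting stability statements and keeping the analysis symmetric with that side.
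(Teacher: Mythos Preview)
Your overall plan matches the paper's: cross the rank one walls for $\vbf'$ along a vertical path near $x=0$, identify the moduli of the two Jordan--H\"older factors, and match the exceptional locus with (the strict transform of) $\Ycal_k$ by restricting to the smooth-curve locus. The setup via Lemma~\ref{MnoTSS}, Proposition~\ref{LargestWallM}, and Corollary~\ref{RankOneM} is exactly right.

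Where you go astray is in your ``main obstacle''. You assert that the path crosses auxiliary walls for the factor Mukai vectors $(1,1,k)$ and $(-1,1,-k-1)$, so that the factor moduli are only \emph{birational models} of the Hilbert schemes and must be tracked through their own wall-crossings. This is false, and the paper's proof shows why with a one-line trick you overlooked: work along $\sigma'_t=\sigma_{0,t}$ and transport Lemma~\ref{StayGieseker} from $x=-2$ to $x=0$ via tensoring by $\Ocal_S(2)$. A Mukai vector $(1,1,*)$ at $x=0$ becomes $(1,-1,*)$ at $x=-2$, which is of the form $(A,1-2A,B)$; hence $\sigma_{0,t}$ crosses \emph{no} walls for $(1,1,k)$ and $M_{\sigma'_t}(1,1,k)=M_H(1,1,k)\cong\hilb{d+1-k}$ on the nose. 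For the other factor, the paper combines this twist with Proposition~\ref{dual}: since $\sigma_{0,t}$ is self-dual, $M_{\sigma'_t}(-1,1,-k-1)\cong M_{\sigma'_t}(1,1,k+1)\cong\hilb{d-k}$ via $\eta\mapsto\RHom(\Ical_\eta,\Ocal_S(-1))[1]$. So the factor moduli are genuinely Gieseker moduli throughout, and your anticipated wall-tracking never occurs. (Your choice of $\sigma_{-\epsilon,t}$ instead of $\sigma_{0,t}$ is harmless for small $\epsilon$, but $x=0$ makes both the twist and the self-duality exact rather than approximate.)

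A minor correction: the destabilizing triangle does not literally become a short exact sequence of coherent sheaves, since $\RHom(\Ical_\eta,\Ocal_S(-1))$ is a two-term complex when $\eta\neq\emptyset$. The paper keeps it as a distinguished triangle and reads off $\Ecal(-1)\cong\Ocal_C(-\zeta_{d+1-k}+\eta_{d-k})$ directly on the open locus where the map $s_k$ corresponds to a smooth curve and $\eta\cap\zeta=\emptyset$.
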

\begin{proof}
We consider wall-crossing along the path of stability conditions $\sigma'_t:=\sigma_{0,t}$, where $\frac{1}{\sqrt{d}}<t<+\infty$. By Lemma~\ref{MnoTSS}, there are no totally semistable walls for $\vbf'$ on this path. By Proposition~\ref{LargestWallM}, $M_{\sigma'_t}(\vbf')=M_H(\vbf')$ for $t>\sqrt{\frac{2d+1}{2d}}$. By Corollary~\ref{RankOneM} and our assumption on $d$, $\sigma'_t$ will intersect only rank one walls for $\vbf'$, given by $(1,1,k)$ with $1\leq k\leq d$ in decreasing order in $k$, as $t$ goes from $+\infty$ to $\sqrt{\frac{3}{2d}}-\delta$ for any small $\delta>0$. We will use $g_k$ to denote the flop induced by crossing the wall given by $(1,1,k)$ and $\Xcal_k$ to denote the birational model obtained from $g_k$ which corresponds to the smaller value of $t$.

The first wall $\sigma'_t$ crosses as $t$ decreases is at $t'_d=\sqrt{\frac{2d+1}{2d}}$. It induces the (unique) decomposition of Mukai vectors
\begin{align*}
    (0,2,-1)=(-1,1,-d-1)+(1,1,d).
\end{align*}
By Lemma~\ref{StayGieseker}, along with a twist by $\Ocal_S(2)$ (as Lemma~\ref{StayGieseker} is for $\sigma_{-2,t}$, but here the stability condition is $\sigma_{0,t}$), $M_{\sigma'_t}(1,1,d)=M_H(1,1,d)\cong \hilb{1}$ near $t=t_d'$. On the other hand, $M_{\sigma'_t}(-1,1,-d-1)$ is a single point representing $\Ocal_S(-1)[1]$ near $t=t'_d$. Hence a torsion sheaf $\Ecal\in M(0,2,-1)$ is in the exceptional locus of $g_d$ if and only if it fits into a non-split distinguished triangle
\begin{align*}
    \Ical_p(1)\to \Ecal\to \Ocal_S(-1)[1] ,
\end{align*}
which is equivalent to the short exact sequence of sheaves
\begin{align*}
    0\to \Ocal_S(-1)\xrightarrow{s_d}\Ical_p(1)\to \Ecal\to 0.
\end{align*}
When $s_d$ corresponds to a smooth curve in $|2H|$, the above sequence is equivalent to $\Ecal\in \Ycal_d^\circ$. Denote the exceptional locus of $g_d$ on $M(0,2,-1)$ by $E'_d$. The intersection of $E'_d$ and $\Ycal_d$ contains $\Ycal_d^\circ$. Since both $\Ycal_d$ and $E'_d$ are closed and irreducible, we must have $E'_d=\Ycal_d$.\ 

For $1\leq k\leq d-1$, $\sigma'_t$ intersects the wall given by $(1,1,k)$ at $t'_k=\sqrt{\frac{2k+1}{2d}}$. It induces the (unique) decomposition of Mukai vectors
\begin{align*}
    (0,2,-1)=(-1,1,-k-1)+(1,1,k).
\end{align*}
By Lemma~\ref{StayGieseker}, along with a twist by $\Ocal_S(2)$, near $t=t'_k$ we have $M_{\sigma'_t}(1,1,k)=M_H(1,1,k)\cong \hilb{d+1-k}$, where the last isomorphism is given by $\Ical_\eta(1)\mapsfrom\eta$. By Proposition~\ref{dual} and Lemma~\ref{StayGieseker} with a twist by $\Ocal_S(-2)$, near $t=t_k'$ we have
$M_{\sigma_t'}(-1,1,-k-1)\cong \hilb{d-k}$ via
$$\RHom(\Ical_\eta,\Ocal_S(-1))[1]\mapsfrom\eta.$$
The exceptional locus $E_k'$ of $g_k$ in $\Xcal_k$ is irreducible and a torsion sheaf $\Ecal\in E_k'$ if and only if it fits into a non-split distinguished triangle
\begin{align*}
    \Ical_{\zeta_{d+1-k}}(1)\to\Ecal\to\RHom(\Ical_{\eta_{d-k}},\Ocal_S(-1))[1],
\end{align*}
where $\zeta_{d+1-k}\in\hilb{d+1-k}$ and $\eta_{d-k}\in\hilb{d-k}$. In particular, there exists an open dense subset $E_k'^\circ\subset E_k'$ consisting of $\Ecal$ fitting into a triangle
\begin{align*}
    \RHom(\Ical_{\eta_{d-k}},\Ocal_S(-1))\xrightarrow{s_k}\Ical_{\zeta_{d+1-k}}(1)\to\Ecal,
\end{align*}
where $\eta_{d-k}\cap\zeta_{d+1-k}=\emptyset$ and $s_k$ corresponds to a curve in $B^\circ$. Assuming $\eta_{d-k}\cap\zeta_{d+1-k}=\emptyset$, the map $s_k$ is determined by a curve $C\in B^\circ$ containing $\eta_{d-k}\cup\zeta_{d+1-k}$. It is easy to see that $\Ecal(-1)\cong\Ocal_C(-\zeta_{d+1-k}+\eta_{d-k})$. Thus $E'^\circ_k\subset \Ycal_k^\circ$. In fact, it is clear that $E_k'^\circ$ is an open dense subset  of $\Ycal_k^\circ$. Since $E'_k$ is irreducible, $E'_k$ must be the strict transform of $\Ycal_k$.
\end{proof}
\begin{rem}
Similarly to Proposition~\ref{WisP^nbundle}, one can show that $\Ycal_k$ is generically a $\Pbb^{2d+2k}$-bundle over $\hilb{d-k}\times \hilb{d-k+1}$. See also \cite[Proposition 3.11]{QS22a}.
\end{rem}

\subsection{The Brill-Noether wall}
The Brill-Noether (middle) wall can be viewed as a rank one wall for both $\vbf$ and $\vbf'$. Its behavior is similar to the unique wall for the primitive Beauville-Mukai system. When viewed as a wall $\Wcal'$ for $\vbf'$, its associated hyperbolic lattice contains the vector $(1,0,1)$. Near $x=0$ this yields decompositions
\begin{align*}
    (0,2,-1)&=(1,0,1)+(-1,2,-2)\\
    &=(1,0,1)+(1,0,1)+(-2,2,-3)\qquad&\mbox{if $d\geq 2$,}\\
    &=(1,0,1)+(1,0,1)+(1,0,1)+(-3,2,-4)\qquad&\mbox{if $d\geq 3$,}\\
    &\cdots
\end{align*}
Let $\Ycal_0:=\{\Ecal\in M(0,2,-1)\;|\; h^0(S,\Ecal)\geq 1\}$ be the Brill-Noether locus. Then $\Ycal_0$ has a stratification by the dimension of the space of sections of $\Ecal$. By a similar analysis as \cite[Section 6-8]{Bay18},  $\Wcal'$ induces a stratified Mukai flop of $\Xcal_1$ at the strict transform of $\Ycal_0$.

Viewing the middle wall as a wall $\Wcal$ for $\vbf$, its associated hyperbolic lattice contains the spherical class $(-1,2,-4d+1)$. Near $x=-2$ this yields a decomposition
\begin{align*}
     (1,0,-4d)&=(-1,2,-4d-1)+(2,-2,1)\\
    &=(-1,2,-4d-1)+(-1,2,-4d-1)+(3,-4,4d+2)&\mbox{ if $d\geq 2$,}\\
    &=(-1,2,-4d-1)+(-1,2,-4d-1)+(-1,2,-4d-1)+(4,-6,8d+3)&\mbox{ if $d\geq 3$,}\\
    &\cdots
\end{align*}
Let $Y_0:=\{\xi\in \hilb{4d+1}\;|\; h^0(\Ical_\xi(2))\geq 2\}$. Then $Y_0$ has a natural stratification given by $h^0(\Ical_\xi(2))$, the dimension of the space of curves in $|2H|$ containing $\xi$. A similar analysis as above shows that  $\Wcal$ induces a stratified Mukai flop of $X_1$ at the strict transform of $Y_0$.

\section{Examples}
Using the results from the previous sections, we give complete descriptions of the birational geometry of the rank two Beauville-Mukai systems
when the degree of the surface is small. We use the notations in Section~\ref{SectExc}. Note again by Lemma~\ref{SnoTSS} that there are no totally semistable walls for $\vbf$ on the path $\sigma_{-2,t}$ where $\frac{1}{\sqrt{d}}<t<+\infty$; and by Lemma~\ref{MnoTSS}, if $\delta$ and $\epsilon$ are small enough, then there are no totally semistable walls for $\vbf'$ on the path $\sigma_{-\epsilon,t}$ where $\frac{1}{\sqrt{d}}-\delta<t<+\infty$ (note that the second path starts just below $\frac{1}{\sqrt{d}}$ so that it crosses the Brill-Noether wall).

\subsection{Degree two}
The birational geometry of the rank two Beauville-Mukai system on a surface $S$ with degree two has been investigated in \cite{Hel20}. The main theorem there can be rephrased using the notation from Section~\ref{SectExc} as follows.
\begin{prop}\cite[Theorem 1.2]{Hel20}
Let $(S,H)$ be a polarized K3 surface with $\Pic(S)=\Z \cdot H$ and $H^2=2$. There are five smooth birational models of $\hilb{5}$. They are connected by a chain of flops
\[ \xymatrix@R-1pc@C-2pc{
& \Bl_{Y_2}\hilb{5} \ar[dr]\ar[dl] && \Bl_{\tilde{Y}_1}X_{2}\ar[dr]\ar[dl]  && \mbox{       }&&\Bl_{\tilde{\Ycal_0}}\Xcal_1 \ar[dr]\ar[dl] && \Bl_{\Ycal_1}M \ar[dr]\ar[dl] \\
\hilb{5} \ar@{-->}[rr]^{f_{2}} && X_{2} \ar@{-->}[rr]^{f_1} && X_1\ar[rr]^{\Phi_2}&&\Xcal_0 &&\Xcal_1 \ar@{-->}[ll]_{g_0} &&M, \ar@{-->}[ll]_{g_{1}}} \]
where $\widetilde{\bullet}$ denotes the strict transform of the set $\bullet$ under the appropriate birational map. Here $g_0$ is the flop of $\Xcal_1$ with exceptional locus $\tilde{\Ycal_0}$ induced by crossing the Brill-Noether wall. The model $X_1$ is isomorphic to $\Xcal_0$ via $\Phi_2$.
\end{prop}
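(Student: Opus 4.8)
The plan is to obtain this as a direct specialization of the machinery already built up: everything should follow by applying the wall analysis of Sections~\ref{Rank1Wall}--\ref{SectExc} with $d=1$, and the outcome must agree with \cite[Theorem 1.2]{Hel20}. Concretely, I would first pin down the complete list of flopping walls in $\Mov(\hilb5)$, then read off the chambers and match each one's model with $\hilb5$, $X_2$, $X_1$, $\Xcal_1$, or $M(0,2,-1)$, and finally splice the two sides together at the Brill--Noether wall via $\Phi_2$.

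For the $\hilb5$ side I would run the path $\sigma_{-2,t}$, $t>1$. By Lemma~\ref{SnoTSS} it meets no totally semistable (hence no fake) wall for $\vbf=(1,0,-4)$, and $M_{\sigma_{-2,t}}(\vbf)=\hilb5$ for $t\gg0$. Proposition~\ref{LargestWallS} shows no wall for $\vbf$ has $\Gamma<\tfrac13$, so the $\hilb5$ chamber is $(0,\tfrac13)$; Corollary~\ref{RankOne} shows every wall for $\vbf$ with $\Gamma\leq\tfrac25$ is a rank one wall, hence is one of $(1,-1,2)$ (at $\Gamma=\tfrac13$) and $(1,-1,1)=W_-$ (at $\Gamma=\tfrac25$); and Corollary~\ref{OnleOneTwoS} together with Lemma~\ref{Rank2S} shows a wall for $\vbf$ with $\tfrac25<\Gamma<\Gamma_0=\tfrac49$ would have to be the rank two wall $(2,-3,4d+2)$, which exists only for $d\geq3$. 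So the chambers from $\hilb5$ down to the Brill--Noether wall are $(0,\tfrac13)$, $(\tfrac13,\tfrac25)$, $(\tfrac25,\Gamma_0)$, and by Proposition~\ref{CrossRankOneS} the models and flops are $f_2\colon\hilb5\dashrightarrow X_2$ along $Y_2$ and $f_1\colon X_2\dashrightarrow X_1$ along $\tilde{Y}_1$, which are projective--bundle loci by Proposition~\ref{WisP^nbundle}.

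For the $M(0,2,-1)$ side I would argue symmetrically along $\sigma_{-\epsilon,t}$ with $\epsilon$ small and irrational, so that the path is defined for all $t>0$ and descends past the Brill--Noether wall; Lemma~\ref{MnoTSS} again removes totally semistable walls. Proposition~\ref{LargestWallM} shows no wall for $\vbf'=(0,2,-1)$ has $\Gamma>\tfrac{6}{13}$, so $M(0,2,-1)$ is the chamber $(\tfrac{6}{13},\tfrac12)$; the only rank one wall for $\vbf'$ is $(1,1,1)=W'_-$ at $\Gamma=\tfrac{6}{13}$; and Corollary~\ref{OnlyOneTwoM} with Lemma~\ref{Rank2M} shows a wall with $\Gamma_0<\Gamma<\tfrac{6}{13}$ would be the rank two wall $(2,1,2)$, which exists only for $d\geq5$. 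Hence crossing $W'_-$ gives $g_1\colon M(0,2,-1)\dashrightarrow\Xcal_1$ along $\Ycal_1$ by Proposition~\ref{CrossRankOneM}, with $\Xcal_1$ the chamber $(\Gamma_0,\tfrac{6}{13})$, and crossing $\Wcal'$ gives $g_0\colon\Xcal_1\dashrightarrow\Xcal_0$ along $\tilde{\Ycal}_0$ as in Section~\ref{SectExc} (for $d=1$ an ordinary flop, since the higher decompositions of $\vbf'$ at $x=0$ listed there all require $d\geq2$), with $\Xcal_0$ the chamber $(\tfrac25,\Gamma_0)$.

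Finally I would glue. Since $\Phi_2$ carries walls for $\vbf$ to walls for $\vbf'$ with the same parameter $\Gamma$ (Section~\ref{pre-hilb}) and induces isomorphisms $M_\sigma(\vbf)\cong M_{\Phi_2\sigma}(\vbf')$, it identifies the chamber $(\tfrac25,\Gamma_0)$ for $\vbf$ with that for $\vbf'$, i.e.\ it gives an isomorphism $\Phi_2\colon X_1\xrightarrow{\sim}\Xcal_0$. Collecting everything, $\Mov(\hilb5)$ is cut by exactly the four rays through $\tilde{H}-\Gamma B$ with $\Gamma\in\{\tfrac13,\tfrac25,\tfrac49,\tfrac{6}{13}\}$, so it has five chambers, and by \cite{HT19} there are exactly five birational models, namely $\hilb5$, $X_2$, $X_1\cong\Xcal_0$, $\Xcal_1$, $M(0,2,-1)$; each is smooth because $\vbf$ (respectively $\vbf'$) is primitive and the defining stability condition is chamber--generic \cite{BM14a}, and the resolutions in the diagram are the usual common blow--ups of a flop along the strict transform of the exceptional locus, which here is the relevant projective--space bundle by Proposition~\ref{WisP^nbundle} and the remark after Proposition~\ref{CrossRankOneM}. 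The main obstacle is purely organizational: there is essentially no new computation, so the real content is checking completeness of the wall list on the two narrow intervals $(\tfrac25,\Gamma_0)$ and $(\Gamma_0,\tfrac{6}{13})$ flanking the Brill--Noether wall (where the rank two wall lemmas specialized to $d=1$ are exactly what is needed) and making the $\Phi_2$--identification $X_1\cong\Xcal_0$ precise enough that no spurious sixth model sneaks in at the middle of the cone; since Hellmann settled this case by a more hands-on route, the point of the argument is to confirm that the general method of this paper reproduces his answer.
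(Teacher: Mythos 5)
Your proposal is correct and is essentially the route the paper itself endorses: its proof of this proposition is a citation to Hellmann together with the remark that one can adapt the higher-degree arguments, and your specialization of the wall list, of Lemmas~\ref{Rank2S} and \ref{Rank2M}, and of Propositions~\ref{CrossRankOneS} and \ref{CrossRankOneM} to $d=1$ is exactly that adaptation (Propositions~\ref{deg4} and \ref{deg68} run the same way for $d\geq 2$). The only place you are terser than the paper's template is the identification $X_1\cong\Xcal_0$: as in Corollary~\ref{Rank1Phi} and Proposition~\ref{deg4}, one must verify that $\Phi_2(\sigma_\tau)$ is again a geometric stability condition (i.e.\ that skyscraper sheaves stay stable, via $\Phi_2(\RHom(\Ical_p,\Ocal_S(-2))[1])\cong\Ocal_p$) before concluding that $M_{\Phi_2(\sigma_\tau)}(\vbf')$ is the model $\Xcal_0$ attached to the chamber just below the Brill-Noether wall --- a check you flag as the delicate point but do not actually carry out.
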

\begin{proof}
See \cite[Proof of Theorem 5.2]{Hel20}. One can also adapt the proof of the higher degree cases; see below.
\end{proof}

\subsection{Degree four}
Generic degree four K3 surfaces with Picard rank $1$ are smooth quartics in $\Pbb^3$. Since $d=2$, there are no rank two walls for either $\vbf$ or $\vbf'$ by Lemmas~\ref{Rank2S} and \ref{Rank2M}.
\begin{prop}\label{deg4}
Let $(S,H)$ be a polarized K3 surface with $\Pic(S)=\Z\cdot H$ and $H^2=4$. There are seven birational models of $\hilb{9}$. They are connected by a chain of flops
\[ \xymatrix@R-1pc@C-2pc{
& \Bl_{Y_3}\hilb{9} \ar[dr]\ar[dl]&& \Bl_{\tilde{Y}_2}X_3 \ar[dr]\ar[dl] && \Bl_{\tilde{Y}_1}X_{2}\ar[dr]\ar[dl]  && \mbox{       }&&\Bl_{\tilde{\Ycal_0}}\Xcal_1 \ar[dr]\ar[dl] && \Bl_{\tilde{\Ycal_1}}\Xcal_2 \ar[dr]\ar[dl] && \Bl_{\Ycal_2}M \ar[dr]\ar[dl] \\
\hilb{9} \ar@{-->}[rr]^{f_{3}} && X_{3} \ar@{-->}[rr]^{f_{2}} && X_{2} \ar@{-->}[rr]^{f_1} && X_1\ar[rr]^{\Phi_2}&&\Xcal_0 &&\Xcal_1 \ar@{-->}[ll]_{g_0}&&\Xcal_2 \ar@{-->}[ll]_{g_1} &&M, \ar@{-->}[ll]_{g_{2}}} \]
where $\widetilde{\bullet}$ denotes the strict transform of the set $\bullet$ under the appropriate birational map. Here $g_0$ is the stratified Mukai flop of $\Xcal_1$ with exceptional locus $\tilde{\Ycal_0}$ induced by crossing the Brill-Noether wall. The model $X_1$ is isomorphic to $\Xcal_0$ via $\Phi_2$.
\end{prop}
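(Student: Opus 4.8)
The plan is to list all flopping walls in $\Mov(\hilb{9})$ when $d=2$ and then to assemble the chain of flops from results already established. For the enumeration: on the $\hilb{9}$ side, with $\vbf=(1,0,-8)$, Corollary~\ref{OnleOneTwoS} shows that every wall with $\Gamma<\Gamma_0=\frac{8}{17}$ has rank one or two, while Lemma~\ref{Rank2S} shows the only candidate rank two wall $(2,-3,4d+2)$ exists only for $d\geq 3$; hence for $d=2$ every such wall is one of the three rank one walls for $\vbf$ attached to $(1,-1,k)$, $1\leq k\leq 3$, from Section~\ref{Rank1Wall}. Dually, with $\vbf'=(0,2,-1)$, Corollary~\ref{OnlyOneTwoM} shows every wall with $\Gamma>\Gamma_0$ has rank one or two, while Lemma~\ref{Rank2M} shows the only candidate rank two wall $(2,1,2)$ exists only for $d\geq 5$; hence for $d=2$ every such wall is one of the two rank one walls for $\vbf'$ attached to $(1,1,k)$, $1\leq k\leq 2$. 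These six walls have pairwise distinct $\Gamma$ and, together with the Brill--Noether wall at $\Gamma_0$, exhaust the flopping walls; by \cite[Proposition 4.5]{QS22a} and \cite[Theorem 10.8]{BM14a} the seven resulting chambers are exactly the birational models of $\hilb{9}$.

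Next, to build the chain, I would apply Proposition~\ref{CrossRankOneS} (valid for $d\leq 6$) along the path $\sigma_{-2,t}$, on which there are no totally semistable walls by Lemma~\ref{SnoTSS}: crossing $(1,-1,3)$, $(1,-1,2)$, $(1,-1,1)$ in turn gives $\hilb{9}\dashrightarrow X_3\dashrightarrow X_2\dashrightarrow X_1$, where $f_k$ is the flop at the strict transform of $Y_k$, resolved by the blow-up along that center. Symmetrically, Proposition~\ref{CrossRankOneM} applies along $\sigma_{0,t}$ (no totally semistable walls by Lemma~\ref{MnoTSS}): crossing $(1,1,2)$, $(1,1,1)$ gives $M(0,2,-1)\dashrightarrow\Xcal_2\dashrightarrow\Xcal_1$, where $g_k$ is the flop at the strict transform of $\Ycal_k$.

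It remains to glue these at the Brill--Noether wall. The autoequivalence $\Phi_2$ carries $\vbf$-stability to $\vbf'$-stability and preserves the $\Gamma$-labelling of walls in $\Stab^\dagger(S)$, so $E\mapsto\Phi_2(E)$ induces an isomorphism $M_\sigma(\vbf)\xrightarrow{\sim}M_{\Phi_2(\sigma)}(\vbf')$; applied to $\sigma$ in the chamber immediately below $\Gamma_0$ this identifies $X_1$ with the chamber $\Xcal_0$ immediately below $\Gamma_0$ on the $\vbf'$ side. Finally, by the analysis in Section~\ref{SectExc} (following \cite[Section 6-8]{Bay18}), crossing $\Gamma_0$ induces the stratified Mukai flop $g_0\colon\Xcal_1\dashrightarrow\Xcal_0$ at the strict transform of $\Ycal_0$. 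Concatenating the two chains, the isomorphism $\Phi_2\colon X_1\xrightarrow{\sim}\Xcal_0$, and $g_0$ produces the displayed diagram; since the wall list of the first paragraph is complete for $d=2$, nothing else intervenes.

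The only substantive input is the wall count of the first paragraph, which rests entirely on the rank estimates of Section~6 (Corollaries~\ref{OnleOneTwoS} and~\ref{OnlyOneTwoM}, Lemmas~\ref{Rank2S} and~\ref{Rank2M}); the remainder is bookkeeping via Propositions~\ref{CrossRankOneS} and~\ref{CrossRankOneM} and Section~\ref{SectExc}. The step that needs a little care is the gluing at the Brill--Noether wall: one must check that $\Phi_2$ sends the middle wall to the middle wall and the chamber $X_1$ to precisely the chamber $\Xcal_0$ output by $g_0$ from $\Xcal_1$, and (routinely) that the seven chambers give pairwise non-isomorphic hyperk\"ahler manifolds, using that all six walls are genuine flopping walls together with the standard description of the movable-cone chamber decomposition.
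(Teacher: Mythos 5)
Your proposal is correct and follows essentially the same route as the paper's proof: the wall list comes from the rank bounds of Section 6, the two chains from Propositions~\ref{CrossRankOneS} and~\ref{CrossRankOneM}, $g_0$ is the stratified Mukai flop at the Brill--Noether locus, and $X_1\cong\Xcal_0$ via $\Phi_2$ (which the paper verifies by repeating the geometricity argument of Corollary~\ref{Rank1Phi}, exactly the step you flag as needing care). One minor precision: Lemma~\ref{Rank2S} only rules out rank-two walls between $W_-$ and $\Wcal$, so excluding them in the range $\Gamma\leq\frac{2d}{4d+1}$ also requires Corollary~\ref{RankOne}, which you in effect invoke through Proposition~\ref{CrossRankOneS}.
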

\begin{proof}
By Proposition~\ref{CrossRankOneS}, we obtain models $\hilb{9}$, $X_1$, $X_2$, and $X_3$ by crossing walls for $\vbf$ along $\sigma_t:=\sigma_{-2,t}$ for $\frac{1}{\sqrt{2}}<t<+\infty$ as $t$ decreases. By Proposition~\ref{CrossRankOneM}, we obtain models $M$, $\Xcal_1$, and $\Xcal_2$ by crossing walls for $\vbf'$ along $\sigma'_t:=\sigma_{0,t}$ for $\frac{1}{\sqrt{2}}<t<+\infty$. Since there are finitely many walls for $\vbf'$, the wall-crossing of the largest two walls for $\vbf'$ along $\sigma_{-\epsilon,t}$ is the same as that along $\sigma'_t$ for $\epsilon$ small enough.  Moreover, $\sigma_{-\epsilon,t}$ intersects the wall given by $(1,0,1)$ at $t_0=\sqrt{\frac{1}{2}+\frac{\epsilon}{4}-\epsilon^2}$. We use $g_0$ to denote the induced birational map as $\sigma_{-\epsilon,t}$ crosses this wall as $t$ decreases. It corresponds to the decompositions of Mukai vectors
\begin{align*}
    (0,2,-1)&=(1,0,1)+(-1,2,-2) \\
    &=(1,0,1)+(1,0,1)+(-2,2,-3).
\end{align*}
Let $\Ycal_0:=\{\Ecal\in M \;|\; h^0(S,\Ecal)\neq 0\}$ with stratification given by the dimension of the space of global sections of $\Ecal$. Then $g_0$ is the stratified Mukai flop of $\Xcal_1$ at $\tilde{\Ycal}_0$. 

It remains to show that $\Phi_2$ induces an isomorphism from $X_1$ to $\Xcal_0$. Let $\frac{1}{\sqrt{2}}<\tau<1$; then $\sigma_\tau$ is in the chamber under the wall for $\vbf$ inducing $f_1$. Arguing as in the proof of Corollary~\ref{Rank1Phi} (cf.\ also the last paragraph of the proof of \cite[Theorem 4.6]{QS22a}) and noting that $\Phi_2(\vbf)=\vbf'$, we see that $\Phi_2$ induces an isomorphism
\begin{align*}
    X_1=M_{\sigma_\tau}(\vbf)\xrightarrow{\Phi_2}M_{\Phi_2(\sigma_\tau)}(\vbf')=M_{\sigma_{x',y'}}(\vbf'),
\end{align*}
where $y'>0$, $(x',y')$ is in a chamber under the wall for $\vbf'$ inducing $g_0$, and $\sigma_{x',y'}$ is in the $\widetilde{\mathrm{GL}}_2^+(\R)$-orbit of $\Phi_2(\sigma_\tau)$. It follows that $M_{\sigma_{x',y'}}(\vbf')=\Xcal_0$. This finishes the proof.
\end{proof}

\subsection{Degrees six and eight}
Suppose $d=3$ or $4$. By Lemmas~\ref{Rank2S} and \ref{Rank2M}, there is a rank two wall for $\vbf$ given by $(2,-3,4d+2)$ and no rank two wall for $\vbf'$. 
\begin{prop}\label{deg68}
Let $(S,H)$ be a polarized K3 surface with $\Pic(S)=\Z\cdot H$ and $H^2=2d$ with $d=3$ or $4$. There are $2d+4$ birational models of $\hilb{4d+1}$. They are connected by a chain of flops
\[ \xymatrix@R-1pc@C-2pc{
& \Bl_{Y_{d+1}}S^{[4d+1]} \ar[dr]\ar[dl] && \Bl_{\tilde{Y}_d}X_{d+1}\ar[dr]\ar[dl] && && \Bl_{\tilde{Y}_1}X_2\ar[dr]\ar[dl] && \Bl_{V}X_1\ar[dr]\ar[dl] && \\
\hilb{4d+1} \ar@{-->}[rr]^{f_{d+1}} && X_{d+1} \ar@{-->}[rr]^{f_d} && X_d &\cdots & X_2 \ar@{-->}[rr]^{f_1} &&  X_1 \ar@{-->}[rr]^{h} &&X_\flat&\xrightarrow{\Phi_2}} \]
\[ \xymatrix@R-1pc@C-2pc{
&&  \Bl_{\tilde{\Ycal}_0}\Xcal_1\ar[dr]\ar[dl] && \Bl_{\tilde{\Ycal}_1}\Xcal_2\ar[dr]\ar[dl] && && \Bl_{\tilde{\Ycal}_{d-1}}\Xcal_d\ar[dr]\ar[dl] && \Bl_{\Ycal_d}M \ar[dr]\ar[dl]\\
 \xrightarrow{\Phi_2}&\Xcal_0 && \Xcal_1 \ar@{-->}[ll]_{g_0} && \Xcal_2 \ar@{-->}[ll]_{g_1}&\cdots&  \Xcal_{d-1}&&  \Xcal_d \ar@{-->}[ll]_{g_{d-1}} &&M, \ar@{-->}[ll]_{g_d}} \]
where $\widetilde{\bullet}$ denotes the strict transform of the set $\bullet$ under the appropriate birational map. Here $V\subset X_1$ is a $\Pbb^{2d+5}$-bundle over $M_H(2,-3,4d+2)\times \hilb{d-1}$ and $h$ is the flop of $X_1$ at $V$, whereas $g_0$ is the stratified Mukai flop of $\Xcal_1$ with exceptional locus $\tilde{\Ycal_0}$ induced by crossing the Brill-Noether wall. The model $X_\flat$ is isomorphic to $\Xcal_0$ via $\Phi_2$.
\end{prop}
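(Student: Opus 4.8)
The plan is to compute the entire chamber decomposition of $\Mov(\hilb{4d+1})$, together with the flop attached to each wall, by following the two wall-crossing paths of the paper: $\sigma_t:=\sigma_{-2,t}$ for $\vbf=(1,0,-4d)$ (which sees all walls with $\Gamma\le\Gamma_0$) and $\sigma_{-\epsilon,t}$ for $\vbf'=(0,2,-1)$ (which sees all walls with $\Gamma\ge\Gamma_0$). By Lemmas~\ref{SnoTSS} and \ref{MnoTSS} neither path meets a totally semistable wall, so each wall crossed is an honest flopping wall and the birational models on its two sides share a dense open locus of commonly stable objects; by \cite[Proposition 4.5]{QS22a} and \cite[Theorem 10.8]{BM14a} the chambers met along the two paths together exhaust $\Mov(\hilb{4d+1})$.

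On the $\hilb{4d+1}$ side, Proposition~\ref{CrossRankOneS} produces the models $\hilb{4d+1},X_{d+1},\dots,X_1$ and the flops $f_{d+1},\dots,f_1$ of the rank one walls $(1,-1,k)$, $k=d+1,\dots,1$, with exceptional loci the strict transforms of the $Y_k$; the largest of these is $W_-$, the wall of $(1,-1,1)$, with $\Gamma=\tfrac{2d}{4d+1}$. Since $d\le7$, Corollary~\ref{OnleOneTwoS} bounds the rank of any wall with $\tfrac{2d}{4d+1}<\Gamma<\Gamma_0$ by two; as all rank one walls for $\vbf$ have $\Gamma\le\tfrac{2d}{4d+1}$, every such wall has rank exactly two, and by Lemma~\ref{Rank2S} the only one is the wall $h$ given by $\abf=(2,-3,4d+2)$ (present because $d\ge3$). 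To describe $h$ I would use its decomposition along $x=-2$,
\[
(1,0,-4d)=(2,-3,4d+2)+(-1,3,-8d-2).
\]
As $(2,-3,4d+2)$ has the form $(A,1-2A,B)$, Lemma~\ref{StayGieseker} gives $M_{\sigma_t}(2,-3,4d+2)=M_H(2,-3,4d+2)$ near $h$; and, by the duality-plus-twist argument in the proof of Proposition~\ref{CrossRankOneM} (using Proposition~\ref{dual} and Lemma~\ref{StayGieseker}), $M_{\sigma_t}(-1,3,-8d-2)\cong\hilb{d-1}$ via $\eta\mapsto\RHom(\Ical_\eta,\Ocal_S(-3))[1]$. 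A Riemann--Roch computation---using that $\Hom$ and $\Ext^2$ between the two factors vanish, by $\mu$-stability and Serre duality---then shows the relevant $\Ext^1$ has dimension $2d+6$, so the exceptional locus $V\subset X_1$ of $h$ is the $\Pbb^{2d+5}$-bundle over $M_H(2,-3,4d+2)\times\hilb{d-1}$ parametrizing these extensions, $h:X_1\dashrightarrow X_\flat$ is the flop of $V$, and the flopped locus on $X_\flat$ has the analogous structure.

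On the $M(0,2,-1)$ side, Proposition~\ref{CrossRankOneM} produces the models $M(0,2,-1),\Xcal_d,\dots,\Xcal_1$ and the flops $g_d,\dots,g_1$ of the rank one walls $(1,1,k)$, $k=d,\dots,1$, the smallest being $W_-'$, the wall of $(1,1,1)$, with $\Gamma=\tfrac{6d}{12d+1}>\Gamma_0$. Since $d\le9$, Corollary~\ref{OnlyOneTwoM} bounds the rank of any wall with $\Gamma>\Gamma_0$ by two, and since $d<5$ Lemma~\ref{Rank2M} rules out a rank two wall for $\vbf'$; combined with the fact that every rank one wall for $\vbf'$ has $\Gamma\ge\tfrac{6d}{12d+1}$ (Corollary~\ref{RankOneM}), this shows there is no wall for $\vbf'$ strictly between $\Wcal'$ and $W_-'$. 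The Brill--Noether wall $\Wcal=\Wcal'$ ($\Gamma=\Gamma_0$) is crossed along $\sigma_{-\epsilon,t}$ and, by the analysis of Section~\ref{SectExc} following \cite[Sections 6-8]{Bay18}, induces the stratified Mukai flop $g_0:\Xcal_1\dashrightarrow\Xcal_0$ at the strict transform of $\Ycal_0$. Finally, since $\Phi_2(\vbf)=\vbf'$, the argument of Corollary~\ref{Rank1Phi} and Proposition~\ref{deg4} carries over verbatim: $\Phi_2$ sends a stability condition just below $h$ into the $\widetilde{\mathrm{GL}}_2^+(\R)$-orbit of a geometric stability condition lying below $\Wcal'$ for $\vbf'$, so $\Phi_2:X_\flat\xrightarrow{\sim}\Xcal_0$. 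Collecting the walls---the $d+1$ rank one walls for $\vbf$, the rank two wall $h$, the Brill--Noether wall, and the $d$ rank one walls for $\vbf'$---gives $2d+3$ walls and hence $2d+4$ chambers of $\Mov(\hilb{4d+1})$, which is exactly the displayed chain.

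The main obstacle is the analysis of the rank two wall $h$. In the rank one cases of Section~\ref{SectExc} one factor is a line bundle or a twisted ideal sheaf and the governing $\Ext$-groups are transparent; here one must first recognize the non-Gieseker Jordan--H\"older factor as a twisted derived dual of $\hilb{d-1}$, and then carry out the $\Ext^1$ computation precisely enough to pin down the fiber dimension $2d+5$, since the needed vanishing of $\Hom$ and $\Ext^2$ between the two factors (from which the dimension is read off via $\chi$) is less immediate than before.
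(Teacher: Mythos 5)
Your proposal is correct and follows essentially the same route as the paper: the same two wall-crossing paths, the same reduction to Propositions~\ref{CrossRankOneS} and \ref{CrossRankOneM} plus Corollary~\ref{OnleOneTwoS}, Lemma~\ref{Rank2S}, Corollary~\ref{OnlyOneTwoM}, Lemma~\ref{Rank2M}, the same decomposition $(1,0,-4d)=(2,-3,4d+2)+(-1,3,-8d-2)$ identified via Lemma~\ref{StayGieseker} and the duality-plus-twist argument, and the same $\Phi_2$ argument as in Proposition~\ref{deg4} for $X_\flat\cong\Xcal_0$. The only cosmetic difference is that you compute the fiber dimension $2d+5$ directly from $(\abf,\vbf-\abf)=2d+6$ with $\Hom=\Ext^2=0$, whereas the paper simply cites \cite[Section 14]{BM14b} for the $\Pbb^{2d+5}$-bundle structure of $V$.
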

\begin{proof}
We consider the wall-crossing for $\vbf$ along the path $\sigma_{-2,t}$ for $\frac{1}{\sqrt{d}}<t<+\infty$ as $t$ decreases. By Proposition~\ref{CrossRankOneS}, we obtain the models $\hilb{4d+1}$ and $X_{k}$ for $1\leq k \leq d+1$, and the description of the flops connecting them. By Corollary~\ref{OnleOneTwoS} and Lemma~\ref{Rank2S}, $\sigma_t$ will cross one more wall for $\vbf$, which is rank two, at $t=\frac{2}{\sqrt{3d}}$ before $t$ reaches $\frac{1}{\sqrt{d}}$. We use $h$ to denote the induced birational map as $\sigma_t$ crosses the rank two wall with $t$ decreasing and $V$ to denote the exceptional locus of $h$ in $X_1$. Note that $h$ corresponds to the decomposition of Mukai vectors
\begin{align*}
    (1,0,-4d)=(2,-3,4d+2)+(-1,3,-8d-2).
\end{align*}
By Lemma~\ref{StayGieseker}, near $t=\frac{2}{\sqrt{3d}}$ we have $M_{\sigma_t}(2,-3,4d-2)=M_H(2,-3,4d-2)$ and we have $M_{\sigma_t}(-1,3,-8d-2)\cong\hilb{d-1}$ via
$\RHom(\Ical_\eta,\Ocal_S(-3))[1]\mapsfrom \eta$. By \cite[Section 14]{BM14b}, the exceptional locus $V$ of $h$ in $X_1$ is a $\Pbb^{2d+5}$-bundle over $M_H(2,-3,4d+2)\times \hilb{d-1}$.\ 

For the second row of the diagram, we consider the wall-crossing for $\vbf'$ along the path $\sigma_{-\epsilon,t}$ for a fixed small $\epsilon>0$, and for $\frac{1}{\sqrt{d}}<t<+\infty$ as $t$ decreases. By Proposition~\ref{CrossRankOneM}, we obtain the models $M$ and $\Xcal_k$ for $1\leq k\leq d$, and the description of the flops connecting them. We use $g_{0}$ to denote the induced birational map as $\sigma_{-\epsilon,t}$ crosses the Brill-Noether wall with $t$ decreasing The description of $g_0$ and the fact that $\Phi_2$ induces an isomorphism from $X_\flat$ to $\Xcal_0$ can be checked as in the proof of Proposition~\ref{deg4}.
\end{proof}

\subsection{Degrees ten and twelve}
Suppose $d=5$ or $6$. By Lemmas~\ref{Rank2S} and \ref{Rank2M}, there is a rank two wall for $\vbf$ given by $(2,-3,4d+2)$ and for $\vbf'$ given by $(2,1,2)$. 
\begin{prop}\label{deg10}
Let $(S,H)$ be a polarized K3 surface with $\Pic(S)=\Z\cdot H$ and $H^2=2d$ with $d=5$ or $6$. There are $2d+5$ birational models of $\hilb{4d+1}$. They are connected by a chain of flops
\[ \xymatrix@R-1pc@C-2pc{
& \Bl_{Y_{d+1}}S^{[4d+1]} \ar[dr]\ar[dl] && \Bl_{\tilde{Y}_d}X_{d+1}\ar[dr]\ar[dl] && && \Bl_{\tilde{Y}_1}X_2\ar[dr]\ar[dl] && \Bl_{V}X_1\ar[dr]\ar[dl] && \\
\hilb{4d+1} \ar@{-->}[rr]^{f_{d+1}} && X_{d+1} \ar@{-->}[rr]^{f_d} && X_d &\cdots & X_2 \ar@{-->}[rr]^{f_1} &&  X_1 \ar@{-->}[rr]^{h} &&X_\flat&\xrightarrow{\Phi_2}} \]
\[ \xymatrix@R-1pc@C-2pc{
&&  \Bl_{\widetilde{\Ycal_0}}\Xcal_\flat\ar[dr]\ar[dl]&&  \Bl_{\mathcal{V}}\Xcal_1\ar[dr]\ar[dl] && \Bl_{\tilde{\Ycal}_1}\Xcal_2\ar[dr]\ar[dl] && && \Bl_{\tilde{\Ycal}_{d-1}}\Xcal_d\ar[dr]\ar[dl] && \Bl_{\Ycal_d}M \ar[dr]\ar[dl]\\
 \xrightarrow{\Phi_2}&\Xcal_0 &&\Xcal_\flat \ar@{-->}[ll]_{g_0} && \Xcal_1 \ar@{-->}[ll]_{j} && \Xcal_2 \ar@{-->}[ll]_{g_1}&\cdots&  \Xcal_{d-1}&&  \Xcal_d \ar@{-->}[ll]_{g_{d-1}} &&M, \ar@{-->}[ll]_{g_d}} \]
where $\widetilde{\bullet}$ denotes the strict transform of the set $\bullet$ under the appropriate birational map. Here $V\subset X_1$ is a $\Pbb^{2d+5}$-bundle over $M_H(2,-3,4d+2)\times \hilb{d-1}$ and $h$ is the flop of $X_1$ at $V$, whereas $\mathcal{V}\subset \Xcal_1$ is a $\Pbb^{2d+9}$-bundle over $M_H(2,1,2)\times M_H(2,1,3)$, and lastly $g_0$ is the stratified Mukai flop of $\Xcal_\flat$ with exceptional locus $\tilde{\Ycal}_{0}$ induced by crossing the Brill-Noether wall. The model $X_\flat$ is isomorphic to $\Xcal_0$ via $\Phi_2$.
\end{prop}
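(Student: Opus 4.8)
The plan is to run the two wall-crossing paths exactly as in Propositions~\ref{deg4} and~\ref{deg68}, now carrying one extra rank two wall on each side, and then to splice the two resulting chains together along $\Phi_2$. On the $\hilb{4d+1}$ side I would cross walls for $\vbf$ along $\sigma_t:=\sigma_{-2,t}$ with $\frac{1}{\sqrt d}<t<+\infty$ and $t$ decreasing; by Lemma~\ref{SnoTSS} there are no totally semistable walls along it. Proposition~\ref{CrossRankOneS} produces $\hilb{4d+1}$, the models $X_k$ for $1\le k\le d+1$, and the flops $f_{d+1},\dots,f_1$ along the rank one walls $(1,-1,k)$. Since $d\in\{5,6\}$, Corollaries~\ref{RankOne} and~\ref{OnleOneTwoS} together with Lemma~\ref{Rank2S} show that the only remaining wall for $\vbf$ below $W_-$ (necessarily above the Brill-Noether wall $\Wcal$) is the rank two wall associated to $\abf=(2,-3,4d+2)$, which is present precisely because $d\ge 3$. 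Writing $h$ for its flop and $X_\flat$ for the next model, the induced decomposition at $x=-2$ is $(1,0,-4d)=(2,-3,4d+2)+(-1,3,-8d-2)$; by Lemma~\ref{StayGieseker}, near the wall $M_{\sigma_t}(2,-3,4d+2)=M_H(2,-3,4d+2)$ and $M_{\sigma_t}(-1,3,-8d-2)\cong\hilb{d-1}$ via $\RHom(\Ical_\eta,\Ocal_S(-3))[1]\mapsfrom\eta$, so by \cite[Section 14]{BM14b} the exceptional locus $V\subset X_1$ is a $\Pbb^{2d+5}$-bundle over $M_H(2,-3,4d+2)\times\hilb{d-1}$. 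This reproduces the first line of the diagram verbatim from Proposition~\ref{deg68}.

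On the $M(0,2,-1)$ side I would cross walls for $\vbf'$ along $\sigma'_t:=\sigma_{-\epsilon,t}$, with $\epsilon>0$ fixed and small (so that Lemma~\ref{MnoTSS} applies and $\sigma'_t$ meets the same finitely many walls as $\sigma_{0,t}$), letting $t$ decrease to just below $\frac{1}{\sqrt d}$ so that the Brill-Noether wall $\Wcal'$ is also crossed. Proposition~\ref{CrossRankOneM} produces $M(0,2,-1)$, the models $\Xcal_k$ for $1\le k\le d$, and the flops $g_d,\dots,g_1$ along the rank one walls $(1,1,k)$. Since $d\in\{5,6\}$, Corollaries~\ref{RankOneM} and~\ref{OnlyOneTwoM} together with Lemma~\ref{Rank2M} show that the only wall for $\vbf'$ strictly between $W'_-$ and $\Wcal'$ is the rank two wall associated to $\abf'=(2,1,2)$, which is present precisely because $d\ge 5$; this is the single feature not already present in Proposition~\ref{deg68}. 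Writing $j$ for its flop and $\Xcal_\flat$ for the next model, the induced decomposition at $x=0$ is $(0,2,-1)=(2,1,2)+(-2,1,-3)$; by Lemma~\ref{StayGieseker} (after a twist) $M_{\sigma'_t}(2,1,2)=M_H(2,1,2)$ near the wall, while by Proposition~\ref{dual} together with Lemma~\ref{StayGieseker} $M_{\sigma'_t}(-2,1,-3)\cong M_H(2,1,3)$, so by \cite[Section 14]{BM14b} the exceptional locus $\mathcal V\subset\Xcal_1$ is a $\Pbb^{2d+9}$-bundle over $M_H(2,1,2)\times M_H(2,1,3)$. Crossing $\Wcal'$ next, whose associated lattice contains $(1,0,1)$, the analysis of Section~\ref{SectExc} (cf.\ \cite[Sections 6--8]{Bay18}) shows that $g_0$ is the stratified Mukai flop of $\Xcal_\flat$ along $\tilde{\Ycal}_0$, where $\Ycal_0=\{\Ecal\mid h^0(S,\Ecal)\ge 1\}$, which yields $\Xcal_0$.

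To splice the two chains I would argue exactly as in the proof of Proposition~\ref{deg4} (hence ultimately as in Corollary~\ref{Rank1Phi}): choosing $\tau$ in the chamber on the $\vbf$ side between $h$ and $\Wcal$, so that $M_{\sigma_\tau}(\vbf)=X_\flat$, and using $\Phi_2(\vbf)=\vbf'$, the equivalence $\Phi_2$ carries $M_{\sigma_\tau}(\vbf)$ isomorphically onto $M_{\Phi_2(\sigma_\tau)}(\vbf')=M_{\sigma_{x',y'}}(\vbf')$ for some $(x',y')$ in the chamber just below $\Wcal'$, which is $\Xcal_0$; hence $X_\flat\cong\Xcal_0$. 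Concatenating the two chains along this isomorphism gives the stated diagram, with the $\vbf$ side contributing the $d+3$ models $\hilb{4d+1},X_{d+1},\dots,X_1,X_\flat$, the $\vbf'$ side the $d+3$ models $M,\Xcal_d,\dots,\Xcal_1,\Xcal_\flat,\Xcal_0$, and the single overlap $X_\flat\cong\Xcal_0$, for the asserted total of $2d+5$ birational models.

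All the numerical input is already packaged in the cited corollaries and lemmas, so the main difficulty is bookkeeping, and it has two strands. First, one must confirm that the two rank two walls occupy exactly the claimed positions in the wall ordering, with nothing sitting between the last rank one wall and the Brill-Noether wall on either side; this rests on the observations that no rank one wall for $\vbf$ has $\Gamma$ strictly between $\frac{2d}{4d+1}$ and $\Gamma_0$, that no rank one wall for $\vbf'$ has $\Gamma$ strictly between $\frac{6d}{12d+1}$ and $\Gamma_0$, together with Lemmas~\ref{Rank2S} and~\ref{Rank2M}. Second, one must pin down the moduli spaces attached to the sub-Mukai vectors in the two rank two decompositions; the only non-obvious case is $(-2,1,-3)$ on the $M(0,2,-1)$ side, which is identified with the Gieseker moduli space $M_H(2,1,3)$ only after applying the derived dual of Proposition~\ref{dual} followed by a line bundle twist, so along the way one must check that the relevant auxiliary path remains in the Gieseker chamber via Lemma~\ref{StayGieseker}.
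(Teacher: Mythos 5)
Your proposal is correct and follows essentially the same route as the paper: both rows are obtained by adapting Proposition~\ref{deg68} (rank one walls via Propositions~\ref{CrossRankOneS} and \ref{CrossRankOneM}, the rank two walls $(2,-3,4d+2)$ and $(2,1,2)$ via Corollaries~\ref{OnleOneTwoS}, \ref{OnlyOneTwoM} and Lemmas~\ref{Rank2S}, \ref{Rank2M}, the bundle structure of $V$ and $\mathcal V$ via \cite[Section 14]{BM14b}), and the splice $X_\flat\cong\Xcal_0$ via $\Phi_2$ as in Proposition~\ref{deg4}. Your handling of $(2,1,2)$ and $(-2,1,-3)$ by Lemma~\ref{StayGieseker} after a twist (and Proposition~\ref{dual}) is just a direct formulation of the paper's ``as in Lemma~\ref{StayGieseker}'' step, with the same smallness-of-$\epsilon$ caveat, so there is no substantive difference.
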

\begin{proof}
Adapting the proof of Proposition~\ref{deg68}, it only remains to describe the map $j$. For a fixed small $\epsilon>0$, the path $\sigma_{-\epsilon,t}$ crosses the rank one walls for $\vbf'$ as $t$ decreases. By Corollary~\ref{OnlyOneTwoM} and Lemma~\ref{Rank2M}, $\sigma_{-\epsilon,t}$ then crosses a rank two wall for $\vbf'$ at $t=\sqrt{\frac{5}{4d}+\frac{\epsilon}{2d}-\epsilon^2}$.  We use $j$ to denote the induced birational map as $\sigma_{-\epsilon,t}$ crosses the rank two wall with $t$ decreasing and $\mathcal{V}$ to denote the exceptional locus of $j$ on $\Xcal_1$. As in Lemma~\ref{StayGieseker}, one can show that $\sigma_{0,t}$ does not cross any wall for $(2,1,2)$ or $(-2,1,-3)$. Moreover, if $\epsilon$ is small enough, neither does $\sigma_{-\epsilon,t}$. Hence near $t=\sqrt{\frac{5}{4d}+\frac{\epsilon}{2d}-\epsilon^2}$ we have $M_{\sigma_{-\epsilon,t}}(2,1,2)=M_H(2,1,2)$ and $M_{\sigma_{-\epsilon,t}}(-2,1,-3)\cong M_H(2,1,3)$ via $\RHom(F,\Ocal_S)[1]\mapsfrom F$. By \cite[Section 14]{BM14b}, $\mathcal{V}$ is a $\Pbb^{2d+9}$-bundle over $M_H(2,1,2)\times M_H(2,1,3)$ and $j$ is the flop of $\Xcal_1$ at $\mathcal{V}$.
\end{proof}

\end{document}